\documentclass[10pt]{article}
\usepackage{authblk}

\usepackage{margincomment}
\usepackage{xcolor}
\definecolor{azure}{rgb}{0.25, 0.41, 0.88}
\definecolor{ballblue}{rgb}{0.13, 0.67, 0.8}
\definecolor{mountainmeadow}{rgb}{0.31, 0.78, 0.47}
\definecolor{blush}{rgb}{0.87, 0.36, 0.51}

\usepackage{times}
\usepackage{mathtools}
\usepackage[
  colorlinks   = true,   
  linkcolor    = ballblue,  
  citecolor    = ballblue, 
  urlcolor     = ballblue,  
  linktoc      = page    
]{hyperref}
\usepackage[shortlabels]{enumitem}
\usepackage{mathrsfs}
\usepackage{mhequ}

\RequirePackage{color}
\RequirePackage[a4paper,top=1.6in,bottom=1.6in,left=1.6in,right=1.6in]{geometry}
\RequirePackage{lastpage}
\usepackage[english]{babel}
\RequirePackage{amsmath,amsthm,amssymb,mathtools}
\RequirePackage{enumitem}
\RequirePackage{placeins}

\usepackage{fancyhdr}
\usepackage[font=small]{caption}
\usepackage{titlesec}
\titleformat{\section}
  {\normalfont\fontsize{14}{15}\bfseries}{\thesection}{1em}{}
\titleformat{\subsection}
  {\normalfont\fontsize{12}{15}\bfseries}{\thesubsection}{1em}{}

\newcommand{\expected}{\mathbb{E}}
\newcommand{\prob}{\mathbb{P}}
\newcommand{\eps}{\varepsilon}

\newcommand{\energy}{\mathscr{E}}

\makeatletter
\DeclareFontFamily{OMX}{MnSymbolE}{}
\DeclareSymbolFont{MnLargeSymbols}{OMX}{MnSymbolE}{m}{n}
\SetSymbolFont{MnLargeSymbols}{bold}{OMX}{MnSymbolE}{b}{n}
\DeclareFontShape{OMX}{MnSymbolE}{m}{n}{
    <-6>  MnSymbolE5
   <6-7>  MnSymbolE6
   <7-8>  MnSymbolE7
   <8-9>  MnSymbolE8
   <9-10> MnSymbolE9
  <10-12> MnSymbolE10
  <12->   MnSymbolE12
}{}
\DeclareFontShape{OMX}{MnSymbolE}{b}{n}{
    <-6>  MnSymbolE-Bold5
   <6-7>  MnSymbolE-Bold6
   <7-8>  MnSymbolE-Bold7
   <8-9>  MnSymbolE-Bold8
   <9-10> MnSymbolE-Bold9
  <10-12> MnSymbolE-Bold10
  <12->   MnSymbolE-Bold12
}{}

\let\llangle\@undefined
\let\rrangle\@undefined
\DeclareMathDelimiter{\llangle}{\mathopen}
                     {MnLargeSymbols}{'164}{MnLargeSymbols}{'164}
\DeclareMathDelimiter{\rrangle}{\mathclose}%
                     {MnLargeSymbols}{'171}{MnLargeSymbols}{'171}
\makeatother

\DeclareMathOperator*{\mymax}{\text{\normalfont max}}
\DeclareMathOperator*{\mymin}{\text{\normalfont min}}
\DeclareMathOperator*{\mysup}{\text{\normalfont sup}}

\DeclareMathOperator*{\mylim}{\text{\normalfont lim}}
\DeclareMathOperator*{\mylog}{\text{\normalfont log}}
\DeclareMathOperator*{\mylimsup}{\text{\normalfont limsup}}
\DeclareMathOperator*{\myliminf}{\text{\normalfont liminf}}
\DeclareMathOperator*{\myexp}{\text{\normalfont exp}}
\DeclareMathOperator*{\de}{\hspace{-.1em}\text{\normalfont d\hspace{-.15em}}}

\numberwithin{equation}{section}
\newtheorem{theorem}{Theorem}[section]

\newtheorem{lemma}[theorem]{Lemma}
\newtheorem{proposition}[theorem]{Proposition}
\theoremstyle{definition}
\newtheorem{definition}[theorem]{Definition}
\newtheorem{remark}[theorem]{Remark}
\newtheorem*{acknowledgements}{Acknowledgements}

\DeclareFontEncoding{LS1}{}{}
\DeclareFontSubstitution{LS1}{stix}{m}{n}
\DeclareSymbolFont{stixletters}{LS1}{stix}{m}{it}
\DeclareMathAccent{\cev}{\mathord}{stixletters}{"91}
\DeclareMathAccent{\vec}{\mathord}{stixletters}{"92}

\usepackage{cancel}

\makeatletter
\newcommand\@affiliationlist{}

\renewcommand\affil[2][]{%
  \g@addto@macro\@affiliationlist{%
    \item[\textsuperscript{\scriptsize #1}] #2%
  }%
}

\newlength{\affillabelwidth}

\renewcommand{\maketitle}{
  \vspace*{0em}%
  \begin{center}{\LARGE\bfseries \@title \par}\end{center}
  \vspace{1em}%
  \begin{center}{\small \scshape\@date \par}\end{center}\par 
  \vspace{4em}
  \begin{list}{}{\setlength{\leftmargin}{1.5em}\setlength{\rightmargin}{1.5em}}
    \item[] {\raggedright\large\@author \par}
  \end{list}

  \settowidth{\affillabelwidth}{\textsuperscript{\scriptsize 8}} 
  \begin{list}{}{%
      \setlength{\leftmargin}{2.2em}
      \setlength{\rightmargin}{1.5em}
      \setlength{\labelwidth}{\affillabelwidth}
      \setlength{\labelsep}{0.2em}
      \setlength{\itemsep}{.2em}
      \setlength{\parsep}{0pt}
      \setlength{\topsep}{0pt}
  }
    {\raggedright\small \@affiliationlist}
  \end{list}\vspace{0.5em}
}

\renewenvironment{abstract}
  {\par                               
   \begin{list}{}{                        
      \setlength{\leftmargin}{1.5em}
      \setlength{\rightmargin}{1.5em}}
   \item[]                                   
   {\bfseries\abstractname}\par               
   \small\noindent\ignorespaces}             
  {\end{list}}

\begin{document}
\allowdisplaybreaks
\date{\today}
\title{Energy Estimate and Convergence of Weak Solutions of a Class of Nonlinear PDEs with Robin Boundary Conditions}

\author{\bfseries O. Araújo\textsuperscript{1}, P. Gonçalves\textsuperscript{2}, A. Neumann\textsuperscript{3}, M. C. Ricciuti\textsuperscript{4}
}

\affil[1]{Department of Mathematics, Universidade Federal da Paraíba, Cidade Universitária, 58059-900, João Pessoa, PB, Brazil, Email: \href{mailto:oslenne.nogueira@ccen.ufpb.br}{\texttt{oslenne.nogueira@ccen.ufpb.br}}}

\affil[2]{Center for Mathematical Analysis, Geometry and Dynamical Systems, Instituto Superior Técnico, Universidade de Lisboa, Av. Rovisco Pais, 1049-001 Lisboa, Portugal, Email: \href{mailto:pgoncalves@tecnico.ulisboa.pt}{\texttt{pgoncalves@tecnico.ulisboa.pt}}}
 
\affil[3]{Mathematics and Statistics Institute, Universidade Federal do Rio Grande do Sul, Av. Bento Gonçalves, 9500, Porto Alegre, RS, Brazil, Email: \href{mailto:aneumann@mat.ufrgs.br}{\texttt{aneumann@mat.ufrgs.br}}}

\affil[4]{Department of Mathematics, Imperial College London, 180 Queen's Gate, London SW7 2AZ, United Kingdom, Email: \href{mailto:maria.ricciuti18@imperial.ac.uk}{\texttt{maria.ricciuti18@imperial.ac.uk}}}

\clearpage\maketitle
\thispagestyle{empty}

\begin{abstract}
We establish strong convergence in $L^2([0,T]\times[0,1])$ of weak solutions to a class of nonlinear PDEs with Robin boundary conditions to weak solutions of the corresponding Neumann and Dirichlet problems as the boundary parameter $\kappa$ tends to $0$ and $\infty$, respectively. Our approach relies on a particle system 
whose hydrodynamic limits yield the Robin problem for each fixed $\kappa>0$. An energy estimate derived from the microscopic stochastic dynamics provides the key bounds for passing to the subsequent limits $\kappa\to0$ and $\kappa\to\infty$, thereby obtaining the Neumann and Dirichlet problems.
\end{abstract}

\setcounter{tocdepth}{1}


\section{Introduction}

Boundary conditions play a fundamental role in the analysis of diffusive equations and in the description of systems interacting with their environment. Robin boundary conditions provide a natural interpolation between Neumann and Dirichlet boundary conditions, with the corresponding parameter measuring the strength of the interaction between the system and the external reservoirs. In this paper, we study this interpolation for a class of nonlinear diffusion equations that we define below,  and establish the convergence of their weak solutions as the strength of the boundary interaction vanishes or becomes arbitrarily large.

Let $\alpha, \beta, \lambda, \delta>0$, and let $\Phi:[0, \infty)\to[0, \infty)$ be a  strictly increasing and globally Lipschitz function. We consider the nonlinear PDE with Robin boundary conditions given by
\begin{equation}\label{eq:zre_robin}
    \left\{ 
    \begin{array}{ll}
    \partial_t \rho^\kappa_t(u) = \Delta \Phi(\rho^\kappa_t(u)) &\mbox{ for } u \in (0,1) \mbox{ and } t \in (0,T],
    \\\partial_u \Phi (\rho^\kappa_t(0)) = \kappa(\lambda \Phi(\rho^\kappa_t(0))-\alpha) &\mbox{ for } t \in (0,T],
    \\\partial_u \Phi (\rho^\kappa_t(1)) = \kappa(\beta-\delta\Phi(\rho^\kappa_t(1))) &\mbox{ for } t \in (0,T],
    \\\rho^\kappa_0(u) = \gamma(u) &\mbox{ for } u \in [0,1],
    \end{array}
    \right.
\end{equation}
where $\gamma:[0, 1]\to[0, \infty)$ is an integrable function.

Our main goal is to understand the dependence of the weak solution
$\rho^\kappa$ on the parameter $\kappa$. For every measurable initial
profile $\gamma:[0,1]\to[0,\infty)$ satisfying
\eqref{eq:initial_condition}, we prove the strong convergence
\[
    \lim_{\kappa\to0}
    \|\rho^\kappa-\rho^0\|_{L^2([0,T]\times[0,1])}=0
\]
and
\[
    \lim_{\kappa\to\infty}
    \|\rho^\kappa-\rho^\infty\|_{L^2([0,T]\times[0,1])}=0,
\]
where $\rho^0$ and $\rho^\infty$ are the unique weak solutions
to the corresponding Neumann and Dirichlet problems, respectively,
with the same initial profile $\gamma$, namely
\begin{equation}\label{eq:zre_neumann}
    \left\{ 
    \begin{array}{ll}
    \partial_t \rho^0_t(u) = \Delta \Phi(\rho^0_t(u)) &\mbox{ for } u \in (0,1) \mbox{ and } t \in (0,T],
    \\\partial_u \Phi (\rho^0_t(0)) = 0 &\mbox{ for } t \in (0,T],
    \\\partial_u \Phi (\rho^0_t(1)) = 0 &\mbox{ for } t \in (0,T],
    \\\rho^0_0(u) = \gamma(u) &\mbox{ for } u \in [0,1]
    \end{array}
    \right.
\end{equation}
and
\begin{equation}\label{eq:zre_dirichlet}
    \left\{ 
    \begin{array}{ll}
    \partial_t \rho^\infty_t(u) = \Delta \Phi(\rho^\infty_t(u)) &\mbox{ for } u \in (0,1) \mbox{ and } t \in (0,T],
    \\\Phi (\rho^\infty_t(0)) = \frac{\alpha}{\lambda} &\mbox{ for } t \in (0,T],
    \\\Phi (\rho^\infty_t(1)) = \frac{\beta}{\delta} &\mbox{ for } t \in (0,T],
    \\\rho^\infty_0(u) = \gamma(u) &\mbox{ for } u \in [0,1],
    \end{array}
    \right.
\end{equation}
respectively. 
The limiting boundary conditions can be understood directly from the Robin boundary conditions. When $\kappa\to0$, the interaction with the reservoirs becomes weaker and the boundary flux vanishes, formally yielding the Neumann conditions. On the other hand, as $\kappa\to\infty$, the Robin conditions force
$$
\lambda\Phi(\rho^\kappa_t(0))-\alpha\to_{\kappa\to\infty}0,
\qquad
\beta-\delta\Phi(\rho^\kappa_t(1))\to_{\kappa\to\infty}0,
$$
which formally gives
$$
\Phi(\rho_t(0))=\frac{\alpha}{\lambda},
\qquad
\Phi(\rho_t(1))=\frac{\beta}{\delta}.
$$
Thus, in the strong-reservoir regime, the boundary values of the diffusive system are determined by the reservoirs, leading to  Dirichlet boundary conditions.

Although this heuristic argument is straightforward, proving the rigorous convergence of the corresponding weak solutions is a substantially more difficult task. The main issue is to obtain estimates that are uniform in $\kappa$ and provide sufficient compactness to pass to the limit both as $\kappa\to0$ and as $\kappa\to\infty$. Since the equation is nonlinear, the relevant quantities are $\Phi(\rho^\kappa)$ and its spatial derivative. Therefore, a uniform energy estimate gives precisely the required control and, moreover, it provides the information needed to identify the limiting boundary conditions as  Dirichlet or Neumann. The energy estimate is therefore the central ingredient in connecting the Robin boundary conditions to  the Neumann and Dirichlet limits.

Our approach to obtaining this estimate is  probabilistic. We consider boundary-driven symmetric zero-range processes whose hydrodynamic equations are given by \eqref{eq:zre_robin}. The hydrodynamic limit for these systems was established in \cite{agnr25}. The nonlinear function $\Phi$ appearing in the macroscopic equation is determined by the jump rates of the underlying zero-range processes, and the  assumptions on these rates impose restrictions on the class of nonlinearities that can be treated by our method.

The advantage of this microscopic representation is that the required energy estimate can be derived from the random microscopic dynamics. This is particularly delicate in the present setting because the occupation variables of a zero-range process are unbounded. Consequently, obtaining estimates that are uniform in $\kappa$ requires suitable moment estimates and replacement arguments. This distinguishes our approach from related works such as \cite{FGN14,dgn21}, where the underlying microscopic dynamics were of exclusion type so that  occupation variables are bounded and therefore,  the corresponding energy estimates are simpler.

Let us briefly describe the class of zero-range processes for which our arguments hold. The generator is given by \eqref{eq:generator} and has jump rates given by a function
$g:\mathbb{N}\to[0,\infty)$ satisfying $g(0)=0$, $g(k)>0$ for $k\geq1$, and
\begin{equation}\label{eq}
g^*:=\sup_{k\geq0}|g(k+1)-g(k)|<\infty.
\end{equation}
We also assume a spectral-gap estimate related to the mixing properties of the process. If $\operatorname{gap}(j,\ell)$ denotes the spectral gap of the generator of the zero-range process on $\{1,\ldots,\ell\}$ with $j$ particles and jump rate $g$, we assume that there exists $C=C(g)>0$ such that
\begin{equation}\label{spectral_gap}
\operatorname{gap}(j,\ell)
\geq
\frac{C}{\ell^2(1+j/\ell)^2}
\end{equation}
for all $\ell\geq2$ and $j\geq0$. The precise definition of the rates and of the microscopic dynamics is given in Section \ref{sec:zrp}.

The hydrodynamic limit established in \cite{agnr25} identifies the macroscopic density of these processes with the unique weak solution of the nonlinear diffusion equation with Robin boundary conditions, with the precise limiting boundary condition depending on the strength of the reservoirs. In the proof of that result, suitable energy estimates are already required to establish the Sobolev regularity of the limiting density. This is necessary in order to obtain uniqueness of the weak solution to \eqref{eq:zre_robin}.  In the present work, we exploit the same microscopic dynamics to derive a stronger energy estimate, uniform with respect to the parameter $\kappa$. This additional control allows us to study the dependence of the macroscopic solution on the reservoir strength and, in particular, to prove its convergence to the Neumann and Dirichlet limits.

The main contributions of this paper are twofold. First, using the underlying zero-range dynamics, we derive a uniform energy estimate for the family $\{\rho^\kappa\}_{\kappa>0}$. This estimate provides sufficient control to establish the boundary behaviour of the weak solutions and to identify the Robin boundary conditions in the appropriate sense. Second, we use the resulting properties of the associated energy functional to prove the strong convergence of $\rho^\kappa$ in $L^2([0,T]\times[0,1])$ as $\kappa\to0$ and $\kappa\to\infty$. In particular, our analysis gives a rigorous derivation of the transition from Robin to Neumann and Dirichlet boundary conditions in the two extreme regimes.
Our strategy is related to the approach developed for the heat equation with linear Robin boundary conditions in \cite{FGN14} and for the porous medium equation with nonlinear Robin boundary conditions in \cite{dgn21}.  In both cases, the underlying microscopic systems have bounded occupation variables. By contrast, the zero-range processes considered here allow arbitrarily many particles at each site,
which makes the derivation of an energy estimate uniform in
$\kappa$ substantially more delicate. Our framework also
accommodates a broader class of nonlinearities than the
polynomial ones considered in \cite{dgn21}. Addressing these
additional difficulties requires new probabilistic estimates,
whose derivation constitutes a central technical contribution
of this work. The extension of this strategy to broader classes of interacting particle systems, such as the general class of misanthrope processes considered in \cite{GSaa}, is left for future work.

\medskip 

\noindent\textbf{Outline.} 

In Section \ref{sec:notation_results}, we introduce the class of functions $\Phi$ for which our results hold, define the corresponding notion of weak solution, and state the main results of the paper, including the uniform energy estimate, the properties of the associated energy functional, and the convergence of weak solutions as $\kappa\to0$ and $\kappa\to\infty$. In Section \ref{sec:energy}, we recall from \cite{agnr25} the definition of the boundary-driven zero-range processes and the  results needed in the proofs, and we establish the uniform energy estimate. Section \ref{sec:properties} is devoted to the analysis of the energy functional and its relevant properties. In Section \ref{sec:convergence}, we prove the convergence of the weak solutions of \eqref{eq:zre_robin} in the regimes $\kappa\to0$ and $\kappa\to\infty$. Finally, Appendix \ref{sec:appendix} collects auxiliary results concerning weak solutions of the nonlinear PDE in \eqref{eq:zre_robin}, including uniqueness, Hölder continuity in time, and a comparison principle.

\section{Notation and Results}\label{sec:notation_results}
Fix a time horizon $T>0$. Let $C^{m,n}([0, T]\times[0, 1])$ denote the set of continuous functions on $[0, T]\times[0, 1]$ which are $m$ times differentiable in the first variable and $n$ in the second, with continuous derivatives, where $m$ and $n$ are non negative integers. We will also denote by $\mathcal{H}^1$ the Sobolev space $\mathcal H^1(0, 1)$, equipped with the norm
\begin{equation*}
    \|\cdot\|_{\mathcal{H}^1}^2:=\|\cdot\|^2_{L^2((0, 1))}+\|\nabla\cdot \|^2_{L^2((0, 1))},
\end{equation*}
and by ${L^2(0,T; \mathcal{H}^1)}$ the set of measurable functions $f:[0, T]\to\mathcal{H}^1$ with $\int_0^T\|f_t\|_{\mathcal{H}^1}^2\de t<\infty$. Moreover, let $\langle \cdot, \cdot\rangle$ denote the inner product $\langle f, h\rangle:=\int_0^1 f(u)h(u)\de u$ on the set of measurable functions in $L^2([0,1])$. Finally, for each $p\in[1, \infty]$, we will also denote by $\|\cdot\|_p$ the norms $\|\cdot\|_{L^p([0, 1])}$ and  $\|\cdot\|_{L^p([0, T]\times[0, 1])}$; the meaning will be clear from context.

\subsection{Assumptions on $\boldsymbol{\Phi}$} \label{sec:assumptions_Phi}

The class of functions $\Phi$ that we consider in the PDE \eqref{eq:zre_robin} 
is constructed from a corresponding class of functions $g :\mathbb{N}\to [0, \infty)$, see \eqref{eq:R} and \eqref{eq:Phi}. We impose the following assumptions on this function $g$ to
ensure that the associated nonlinearity $\Phi$ is strictly
increasing and globally Lipschitz.

Let $g :\mathbb{N}\to [0, \infty)$ be a map such that $g(0)=0$, it is strictly positive on the set of positive integers, and it has uniformly bounded increments in the following sense:
\begin{equation}\label{eq:function_g}
    g^*:=\mysup_{j\ge 0}|g(j+1)-g(j)|<\infty.
\end{equation}
In addition, we require $g$ to satisfy the two conditions below. The first one is very simple:

\medskip
\textsc{\bfseries Assumption (ND).} The function $g$ is nondecreasing.

\medskip
The second one, instead, is more technical, and is related to the mixing properties of some specific discrete stochastic dynamics, namely those of a \textit{zero-range process}. For an introduction to this process, see \cite[Section 2.3]{kl99}. 

\medskip
\textsc{\bfseries Assumption (SG).} Let $\text{gap}(j, \ell)$ denote the spectral gap of the generator of a symmetric, nearest-neighbour zero-range process on $\{1, \ldots, \ell\}$ with $j$ particles and jump rate $g$. Then, the function $g$  satisfies the following:
\begin{equation*}
    \text{there exists } C=C(g)>0 \text{ such that gap}  (j, \ell)\ge \frac{C}{\ell^2(1+\frac{j}{\ell})^2}\text{ for all } \ell\ge2 \text{ and } j\ge0.
\end{equation*}
Examples of functions $g$ for which \textsc{Assumption (SG)} holds include:
\begin{enumerate}[i)]  
    \item $g(k)=k^\gamma$ for $0<\gamma<1$ \cite{nag10};
    \item $g(k)=\boldsymbol{1}_{[1, \infty)}(k)$ \cite{mor06};
    \item $g$ satisfying \eqref{eq:function_g} and such that there exist $M\in\mathbb{N}$ and $a>0$ such that $g(k)-g(l)\ge a$ for all $k\ge l+M$ \cite{lsv96}.
\end{enumerate}

Let now $g(j)!:=\prod_{1\le k\le j} g(k)$, with $g(0)!:=1$, and define the function $Z:[0, \varphi^*)\to[0, \infty)$ as 
\begin{equation*}
    Z(\varphi):=\sum_{j=0}^\infty\frac{\varphi^j}{g(j)!},
\end{equation*} 
where $\varphi^*$ is its radius of convergence. Let $R:[0,\varphi^*)\to[0, \infty)$ be the function defined as the power series
\begin{equation}\label{eq:R}
    R(\varphi):= \frac{1}{Z(\varphi)}\sum_{j=0}^\infty j\frac{\varphi^j}{g(j)!}.
\end{equation}
We assume that $\mylim_{\varphi\uparrow\varphi*}Z(\varphi)=\infty$: then, as proved in \cite[Section 2.3]{kl99}, the range of $R$ is $[0, \infty)$ and $R$ is strictly increasing, so that in particular it is invertible. Then, for $g$ satisfying the assumptions above, we define
\begin{equation}\label{eq:Phi}
    \Phi:=R^{-1}:[0, \infty)\to[0, \varphi^*)
\end{equation} 
In particular, $\Phi$ is strictly increasing.  Moreover, by \cite[Corollary 2.3.6]{kl99}, we have that $\Phi$ is globally Lipschitz on $[0, \infty)$ with constant $g^*$, namely
\begin{equation}\label{eq:Phi_g*}
   \left|\Phi(u)-\Phi(v)\right| \le g^*|u-v|
\end{equation}
for all $u, v\in[0, \infty)$.

Throughout this paper, we assume that the reservoir parameters
$\alpha,\beta,\lambda,\delta>0$ satisfy
\begin{equation}\label{eq:reservoir_assumption}
    \max\left\{\frac{\alpha}{\lambda},
               \frac{\beta}{\delta}\right\}<\varphi^*,
\end{equation}
where $\varphi^*$ is the radius of convergence of the partition
function $Z$. In particular, the reservoir densities
$\Phi^{-1}(\alpha/\lambda)$ and $\Phi^{-1}(\beta/\delta)$
are well-defined and finite.

\subsection{The Nonlinear PDE}
Now we introduce the precise notion of weak solutions to \eqref{eq:zre_robin}. 
\begin{definition}[Nonlinear PDE with Robin Boundary Conditions]\label{def:weak_robin}  Let $\kappa>0$ and let $\gamma:[0,1]\to[0, \infty)$ be a measurable and integrable function. We say that $\rho^\kappa:[0, T]\times [0, 1]\to[0, \infty)$ is a weak solution of the nonlinear PDE with Robin boundary conditions  given in \eqref{eq:zre_robin} if:
\begin{enumerate}[i)]
\item $\rho^\kappa \in L^2([0, T]\times [0, 1])$,
\item $\Phi(\rho^\kappa)\in L^2([0,T], \mathcal H^1)$,
\item for any $t\in[0,T]$ and $G\in C^{1,2}([0,T]\times [0,1])$,
\begin{equation} \label{eq:integral_zre_rob}
    \begin{split}
    \langle\rho^\kappa_t, G_t\rangle &- \langle\gamma, G_0\rangle - \int_0^t\{\langle \rho^\kappa_s, \partial_s G_s\rangle + \langle\Phi(\rho^\kappa_s), \Delta G_s\rangle\}\de s
    \\&+\int_0^t \{\Phi(\rho^\kappa_s(1))\partial_u G_s(1)-\Phi(\rho^\kappa_s(0))\partial_u G_s(0) \}\de s
    \\&-\kappa\int_0^t \big\{(\beta-\delta\Phi(\rho^\kappa_s(1)))G_s(1) - (\lambda \Phi(\rho^\kappa_s(0))-\alpha)G_s(0) \big\}\de s = 0.
    \end{split}
\end{equation}
\end{enumerate}
\end{definition}
We aim to take limits in the parameter \(\kappa\) and obtain weak solutions to the corresponding equations with Neumann and Dirichlet boundary conditions. We now give their definitions.
\begin{definition}[Nonlinear PDE with Neumann Boundary Conditions]\label{def:weak_neumann}  Let $\gamma:[0,1]\to[0, \infty)$ be a measurable and integrable function. We say that $\rho:[0, T]\times [0, 1]\to[0, \infty)$ is a weak solution of the nonlinear PDE with Neumann boundary conditions \eqref{eq:zre_neumann} if:
\begin{enumerate}[i)]
\item $\rho\in L^2([0, T]\times [0, 1])$,
\item $\Phi(\rho)\in L^2([0,T], \mathcal H^1)$,
\item for any $t\in[0,T]$ and $G\in C^{1,2}([0,T]\times [0,1])$, $\rho$ satisfies the integral formulation \eqref{eq:integral_zre_rob} with $\kappa=0$.
\end{enumerate}
\end{definition}

\begin{definition}[Nonlinear PDE with Dirichlet Boundary Conditions]\label{def:weak_dirichlet}  Let $\gamma:[0,1]\to[0, \infty)$ be a measurable and integrable function. We say that $\rho:[0, T]\times [0, 1]\to[0, \infty)$ is a weak solution of the nonlinear PDE with Dirichlet boundary conditions \eqref{eq:zre_dirichlet} if:
\begin{enumerate}[i)]
\item $\rho\in L^2([0, T]\times [0, 1])$,
\item $\Phi(\rho)\in L^2([0,T], \mathcal H^1)$,
\item for any $t\in[0,T]$ and $G\in C_0^{1,2}([0,T]\times [0,1])$,
\begin{equation} \label{eq:integral_zre_neu}
    \begin{split}
    \langle\rho_t, G_t\rangle &- \langle\gamma, G_0\rangle - \int_0^t\{\langle \rho_s, \partial_s G_s\rangle + \langle\Phi(\rho_s), \Delta G_s\rangle\}\de s
    \\&+\int_0^t \left\{\frac{\beta}{\delta}\partial_u G_s(1)-\frac{\alpha}{\lambda}\partial_u G_s(0) \right\}\de s=0,
    \end{split}
\end{equation}
\item $\Phi(\rho_t(0))=\frac{\alpha}{\lambda}$ and $\Phi(\rho_t(1))=\frac{\beta}{\delta}$ for almost every $t\in(0, T]$.
\end{enumerate}
\end{definition}

The proof of the uniqueness of the weak solutions of \eqref{eq:zre_robin}, \eqref{eq:zre_neumann} and \eqref{eq:zre_dirichlet}, in the sense of the definitions above, is given in Proposition \ref{prop:uniqueness_zre}. 

\subsection{Main Results}\label{subsec:main_results}
For $\kappa>0$, define the function $\bar\varphi^\kappa:[0, 1]\to[0, \infty)$ as
\begin{equation}\label{eq:limit_fug}
    \bar\varphi^\kappa(u):=\frac{-\kappa(\alpha\delta-\beta\lambda)u+\kappa\alpha\delta+\alpha+\beta}{\kappa\lambda\delta+\lambda+\delta}.
\end{equation}

\begin{remark}\label{remark:bar_rho_kappa}
    It is not hard to check that $\bar\rho^\kappa(u):=\Phi^{-1}(\bar\varphi^\kappa(u))$ is a stationary solution to \eqref{eq:zre_robin}.
\end{remark}

Given $\kappa>0$ and a measurable function $\xi:[0, 1]\to[0, \infty)$, we define the measure $W_{\kappa, \xi}$ on $[0, 1]$ via
\begin{equation}
    W_{\kappa, \xi}(\de u):=(\Phi(\xi(u))+\bar\varphi^\kappa(u))\de u+\frac{\Phi(\xi(0))+\bar\varphi^\kappa(0)}{\kappa\lambda}\delta_0(\de u)+\frac{\Phi(\xi(1))+\bar\varphi^\kappa(1)}{\kappa\delta}\delta_1(\de u),
\end{equation}
where $\delta_z(\de u)$ denotes the Dirac measure. Moreover, we denote by $\mathscr{B}$ the subspace of {{bounded and}} measurable functions $\xi:[0, T]\times[0, 1]\to[0, \infty)$ such that $s\mapsto\xi_s(0)$ and $s\mapsto\xi_s(1)$ are measurable and bounded. Given a function $\xi\in\mathscr{B}$, let $\llangle \cdot, \cdot\rrangle_{\kappa, \xi}$ denote the inner product on $L^2([0, T]\times[0,1])$ given by
\begin{align*}
    \llangle H, G\rrangle_{\kappa, \xi} &:=\int_0^T \int_0^1 H_s(u) G_s(u) W_{\kappa, \xi_s}(\de u) \de s
    \\&=\llangle H, G(\Phi(\xi_s)+\bar\varphi^\kappa)\rrangle+\frac{1}{\kappa\lambda}\int_0^TH_s(0)G_s(0)(\Phi(\xi_s(0))+\bar\varphi^\kappa(0))\de s
    \\&\phantom{=\llangle H, G(\Phi(\xi_s)+\bar\varphi^\kappa)\rrangle}+\frac{1}{\kappa\delta} \int_0^T H_s(1)G_s(1)(\Phi(\xi_s(1))+\bar\varphi^\kappa(1))\de s,
\end{align*}
and let $L^2_{\kappa, \xi}([0, T]\times[0, 1])$ the Hilbert space of measurable functions $H:[0, T]\times[0, 1]\to\mathbb{R}$ such that $\llangle H, H\rrangle_{\kappa, \xi}<\infty$. 

\begin{definition}[Energy Functional] Given $\kappa>0$ and a function $\xi:[0, T]\times[0, 1]\to[0, \infty)$ such that $\Phi(\xi)\in L^2([0, T]\times[0, 1])$,   let $\mathscr{T}_{\kappa, \xi}$ denote the functional acting on functions $H\in C^{0, 1}([0, T]\times[0, 1])$ via
\begin{equation}\label{eq:def_T}
    \mathscr{T}_{\kappa, \xi}(H):=\llangle \Phi(\xi)-\bar\varphi^\kappa, \partial_u H\rrangle,
\end{equation}
where $\bar\varphi^\kappa_t(u):=\bar\varphi^\kappa(u)$. For $c>0$, we define the energy functional acting on functions $\xi:[0, T]\times[0, 1]\to[0, \infty)$ such that $\Phi(\xi)\in L^2([0, T]\times[0, 1])$ via
\begin{equation}\label{eq:def_energy}
    \energy_{\kappa, c}(\xi):=\mysup_{H\in C^{0, 1}(
    [0, T]\times[0,1])}\left\{\mathscr{T}_{\kappa, \xi}(H)-c \llangle H, H\rrangle_{\kappa, \xi} \right\}.
\end{equation}
\end{definition}

Our first main result provides an energy bound for the family
$\{\rho^\kappa\}_{\kappa>0}$ of weak solutions to
\eqref{eq:zre_robin}, with an upper bound independent of $\kappa$.
This estimate plays a central role in our analysis: together with
the properties of the energy functional established below, it
provides the control needed to study the boundary behaviour of
these solutions and to pass to the limits $\kappa\to0$ and
$\kappa\to\infty$.

Throughout the statements of the following results,
we assume that the initial profile
$\gamma:[0,1]\to[0,\infty)$ is measurable and satisfies
\begin{equation}\label{eq:initial_condition}
0\leq\gamma(u)\leq
\Phi^{-1}\left(
\min\left\{
\frac{\alpha+\beta}{\lambda+\delta},
\frac{\alpha}{\lambda}(1-u)
+\frac{\beta}{\delta}u
\right\}
\right)
\end{equation}
for almost every $u\in[0,1]$.

\begin{theorem}[Energy Estimate]
\label{thm:energy_estimate}
Let $\gamma:[0,1]\to[0,\infty)$ satisfying \eqref{eq:initial_condition}.

Then there exist constants $c>0$ and $M_0<\infty$,
independent of $\kappa$, such that, for every $\kappa>0$,
the unique weak solution $\rho^\kappa$ to \eqref{eq:zre_robin}
with initial profile $\gamma$ satisfies
\begin{equation*}
\energy_{\kappa,c}(\rho^\kappa)\leq M_0.
\end{equation*}
\end{theorem}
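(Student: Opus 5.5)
The plan is to prove the bound at the microscopic level, where $\rho^\kappa$ is the hydrodynamic limit of the boundary-driven zero-range process of \cite{agnr25}, and then pass it to the limit. Fix $\kappa>0$ and let $\mu^N_{\kappa}$ be the product measure with slowly varying fugacity $\bar\varphi^\kappa(x/N)$, i.e. marginals $\frac{\bar\varphi^\kappa(x/N)^k}{g(k)!Z(\bar\varphi^\kappa(x/N))}$. This is close to stationary because $\bar\varphi^\kappa$ is the affine profile solving the discrete Robin problem. The initial condition \eqref{eq:initial_condition} is used for two purposes. First, it guarantees $\Phi(\gamma)\le \bar\varphi^\kappa$ up to the comparison with $\min\{\frac{\alpha+\beta}{\lambda+\delta},\frac\alpha\lambda(1-u)+\frac\beta\delta u\}$, which bounds $\bar\varphi^\kappa$ from below uniformly in $\kappa$. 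Second, via the comparison principle of the Appendix and attractiveness (Assumption (ND)), it gives $\Phi(\rho^\kappa)\le \max\{\alpha/\lambda,\beta/\delta\}<\varphi^*$ uniformly in $\kappa$. Together these make the relative entropy $H(\mu^N_0\,|\,\mu^N_\kappa)\le C_0N$ with $C_0$ independent of $\kappa$, where the initial law $\mu^N_0$ is taken to be a product measure with fugacity $\Phi(\gamma(x/N))$. They also ensure that all moments of $\eta(x)$ under the dynamics are bounded uniformly in $\kappa$, by stochastic domination by a product measure with fugacity below $\varphi^*$.

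The core step is a microscopic energy estimate. For a fixed $H\in C^{0,1}$, I would consider
\[
\frac1N\sum_x \int_0^T H_s(\tfrac xN)\,N\big(g(\eta_s(x+1))-g(\eta_s(x))\big)\,ds - c\,\frac1N\sum_x\int_0^T H_s(\tfrac xN)^2\big(g(\eta_s(x))+g(\eta_s(x+1))\big)ds,
\]
together with the boundary terms in which $H_s(0)^2$ and $H_s(1)^2$ are weighted by $\frac{1}{\kappa\lambda}(g(\eta(1))+\alpha/\lambda\cdot\ldots)$ and its right-hand analogue. These boundary terms come from the reservoir part of the Dirichlet form, whose rates are of order $\kappa/N$ times $N^2$. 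The estimate proceeds by the entropy inequality followed by Feynman–Kac, which reduces the expectation of the exponential to a variational problem: $\sup_f\{\langle V,f\rangle_{\mu_\kappa}-N\,D_N(\sqrt f)\}$, plus the error term from $\mu_\kappa$ not being invariant, which is $O(N)$ uniformly in $\kappa$ thanks to the linearity of $\bar\varphi^\kappa$. The bulk gradient term is handled by the change of variables $\eta\mapsto\eta^{x,x+1}$, writing $\mathbb{E}[(g(\eta(x+1))-g(\eta(x)))f]$ as a sum of a Dirichlet-form piece and a piece involving $\bar\varphi^\kappa((x+1)/N)-\bar\varphi^\kappa(x/N)$. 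The latter produces exactly the $-\bar\varphi^\kappa$ in $\mathscr T_{\kappa,\xi}$. Young's inequality then absorbs the former into $N D_N$ at the cost of $c H^2(g(\eta(x))+\bar\varphi^\kappa)$, which yields the weight $\Phi(\xi)+\bar\varphi^\kappa$. At the boundary, summation by parts leaves $H(0)(g(\eta(1))-\bar\varphi^\kappa(0))$, and this is controlled by the reservoir Dirichlet form $\kappa\lambda$-weighted in the same way, giving the atoms $\frac{1}{\kappa\lambda}(\Phi(\xi(0))+\bar\varphi^\kappa(0))$. The identity $\partial_u\bar\varphi^\kappa(0)=\kappa(\lambda\bar\varphi^\kappa(0)-\alpha)$ matches the boundary terms exactly, so no $\kappa$ survives in the constant. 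The result is
\[
\mathbb E\Big[\max_{1\le i\le k}\{\mathscr T^N(H_i)-c\llangle H_i,H_i\rrangle^N\}\Big]\le M_0,
\]
for any finite family $H_1,\dots,H_k$, using $e^{\max}\le\sum e$.

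Next, I would pass to the limit $N\to\infty$. The replacement lemmas and spectral gap (SG) from \cite{agnr25}, together with the uniform moment bounds, allow $g(\eta(x))$ to be replaced by $\Phi$ of box averages, and $g(\eta(1))$ by $\Phi(\rho^\kappa_s(0))$. The hydrodynamic limit then gives $\max_i\{\mathscr T_{\kappa,\rho^\kappa}(H_i)-c\llangle H_i,H_i\rrangle_{\kappa,\rho^\kappa}\}\le M_0$. Taking a countable dense family in $C^{0,1}$ and using monotone convergence yields $\energy_{\kappa,c}(\rho^\kappa)\le M_0$.

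The main obstacle will be the boundary replacement $g(\eta_s(1))\approx\Phi(\rho^\kappa_s(0))$ with constants uniform in $\kappa$, given that the occupation variables are unbounded. The cost of the one-block/two-block arguments near the boundary involves the reservoir Dirichlet form only through $\kappa$, and the bulk Dirichlet form must carry it. For this I would first try a cutoff at large $\ell$ using the moment bound, then the spectral gap (SG) on boxes adjacent to the boundary, checking that no negative power of $\kappa$ appears in the errors.
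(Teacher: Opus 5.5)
Your overall architecture matches the paper's: entropy inequality and Feynman--Kac, a change of variables under a product measure, and Young's inequality with the reservoir Dirichlet form at $A=\kappa\lambda$ producing the atoms $\frac{1}{\kappa\lambda}(\Phi(\xi(0))+\bar\varphi^\kappa(0))$. This is followed by replacement, the hydrodynamic limit and a countable dense family, and your boundary step is essentially right.

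\textbf{The gap is in the bulk step}, which is exactly where the unboundedness of the occupation variables matters. Under a product measure $\nu$ with fugacity $\varphi$, the change of variables $\eta\mapsto\eta^{x,x+1}$ gives, for every density $f$,
\begin{align*}
\int \bigl(g(\eta(x))-g(\eta(x+1))\bigr)f\,d\nu
&=\int g(\eta(x))\bigl[f(\eta)-f(\eta^{x,x+1})\bigr]\,d\nu\\
&\quad-\frac{\varphi(x+1)-\varphi(x)}{\varphi(x+1)}\int g(\eta(x+1))f\,d\nu .
\end{align*}
The second piece is therefore not the deterministic increment $\varphi(x+1)-\varphi(x)$ that the $-\bar\varphi^\kappa$ in $\mathscr T_{\kappa,\xi}$ contributes after summation by parts. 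The two agree only when $\int g(\eta(x+1))f\,d\nu=\varphi(x+1)$, e.g.\ for $f\equiv 1$; the $-\bar\varphi^\kappa$ has to be inserted by hand, it is not ``produced'' by the variational formula.

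The leftover is
\[
\sum_x\frac{\varphi(x+1)-\varphi(x)}{\varphi(x+1)}\Bigl\{\varphi(x+1)-\int g(\eta(x+1))f\,d\nu\Bigr\}H\Bigl(\frac xN\Bigr).
\]
It is of order one. Young's inequality against the $H^2$-weights leaves an $H$-independent remainder of order $\frac1N\sum_x\int g(\eta(x))f\,d\nu$. This is unbounded over the densities in the supremum, and you give no argument controlling it; with bounded occupation variables it would be trivially $O(1)$. Relatedly, in the bulk, Young's inequality on the Dirichlet piece costs $H^2\bigl(g(\eta(x))+\frac{\varphi(x)}{\varphi(x+1)}g(\eta(x+1))\bigr)$, not $H^2(g+\bar\varphi^\kappa)$. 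An exact $\bar\varphi$ arises only at the boundary, where $\int g(\eta(1))f(\eta^{1-})\,d\nu=\varphi(1)$.

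\textbf{How the paper closes this.} It works with the exact invariant measure $\bar\nu_N^\kappa$ of Lemma~\ref{lemma:inv_measure} and splits $g(\eta(x))-g(\eta(x+1))$ into two parts:
\begin{itemize}
\item $g(\eta(x))-\frac{\bar\varphi_N^\kappa(x)}{\bar\varphi_N^\kappa(x+1)}g(\eta(x+1))$, whose non-Dirichlet part vanishes identically for every $f$;
\item $\bigl\{\frac{\bar\varphi_N^\kappa(x)}{\bar\varphi_N^\kappa(x+1)}-1\bigr\}g(\eta(x+1))$, which is removed \emph{before} the entropy inequality, pathwise.
\end{itemize}
The pathwise removal uses Young's inequality against the deterministic weight $\frac1N\sum_x\bar\varphi_N^\kappa(x)H(\frac xN)^2$. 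This is what the $\bar\varphi^\kappa$ part of $W_{\kappa,\xi}$ pays for, whence the $c-1$. It leaves the term $B_N^\kappa$, which does not depend on $H$. Its expectation is bounded uniformly in $\kappa$ and $N$ by attractiveness, the domination $\mu^N\le\bar\nu_N^\kappa$, and $E_{\bar\nu_N^\kappa}[g(\eta(x))^2]\le\bar\varphi_N^\kappa(x)(g^*+\bar\varphi_N^\kappa(x))$. You list domination among your ingredients but never use it in the variational step, which is precisely where it is indispensable.

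\textbf{The reference measure.} Your near-invariant continuum-profile measure creates a similar issue. Its non-invariance error is not an $O(1)$ constant: it contains $\frac{C\kappa}{N}\int g(\eta(1))f\,d\nu$ and its right-boundary analogue, which must be absorbed into the reservoir Dirichlet form. Moreover, domination by a non-invariant measure is not preserved by the dynamics. Using $\bar\nu_N^\kappa$ removes both problems.

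\textbf{The obstacle you single out is not one.} The replacement lemmas of \cite{agnr25} are applied at fixed $\kappa$, with errors vanishing as $N\to\infty$ and then $\varepsilon\to0$. Only the variational bound has to be uniform in $\kappa$.
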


Our second main result concerns the regularity of functions
with finite energy. More precisely,
Theorem~\ref{thm:energy_properties} establishes the Sobolev
regularity of $\Phi(\xi)$, provides a representation of its
weak spatial derivative in terms of the Riesz representative
of $\mathscr T_{\kappa,\xi}$, and expresses the energy as a
weighted quadratic norm. Moreover, the generalized boundary
values of the spatial derivative, defined through the Riesz
representation, satisfy the Robin boundary conditions.
These results translate the energy estimate of
Theorem~\ref{thm:energy_estimate} into quantitative control
of the weak spatial derivative of $\Phi(\xi)$ and its
generalized boundary values.

\begin{theorem}[Properties of the Energy Functional]
\label{thm:energy_properties}
Let $\xi\in\mathscr B$, and let
$\Phi:[0,\infty)\to[0,\infty)$ be globally Lipschitz.
Suppose that
$\energy_{\kappa,c}(\xi)\le M_0<\infty$
for some $\kappa,c>0$.

Then there exists a unique
$f\in L^2_{\kappa,\xi}([0,T]\times[0,1])$
such that
\begin{equation*}
\mathscr T_{\kappa,\xi}(H)
=
\llangle f,H\rrangle_{\kappa,\xi}
\end{equation*}
for every $H\in C^{0,1}([0,T]\times[0,1])$. 
Moreover $\Phi(\xi)\in L^2(0,T;\mathcal H^1)$ with its weak spatial derivative given by
\begin{equation*}
\partial_u\Phi(\xi)(t,u)
=
\partial_u\bar\varphi^\kappa(u)
-f_t(u)\bigl(\Phi(\xi_t(u))
+\bar\varphi^\kappa(u)\bigr)
\end{equation*}
for almost every $(t,u)\in(0,T)\times(0,1)$.
Furthermore,
\begin{equation}\label{eq:energy_id}
\energy_{\kappa,c}(\xi)
=\frac{1}{4c}\Bigg\|\frac{\partial_u\Phi(\xi)-\partial_u\bar\varphi^\kappa}{\Phi(\xi)+\bar\varphi^\kappa}\Bigg\|_{\kappa,\xi}^2.
\end{equation}

In addition, the Robin boundary conditions
\begin{equation}\label{eq:boundary_conditions}
\begin{aligned}
\partial_u\Phi(\xi_t)(0)
&=\kappa\bigl(\lambda\Phi(\xi_t(0))-\alpha\bigr),\\
\partial_u\Phi(\xi_t)(1)
&=\kappa\bigl(\beta-\delta\Phi(\xi_t(1))\bigr)
\end{aligned}
\end{equation}
hold for almost every $t\in(0,T)$, where the boundary
values of $\partial_u\Phi(\xi)$ are defined by
\[
\partial_u\Phi(\xi_t)(i)
:=
\partial_u\bar\varphi^\kappa(i)
-f_t(i)\bigl(\Phi(\xi_t(i))
+\bar\varphi^\kappa(i)\bigr),
\qquad i\in\{0,1\}.
\]
These boundary values are understood in a generalized
sense, as the boundary quantities provided by the
Riesz representation, and do not necessarily coincide
with Sobolev traces. We assume that the boundary values
$\Phi(\xi_t(0))$ and $\Phi(\xi_t(1))$
coincide with the Sobolev traces of $\Phi(\xi_t)$
whenever the latter are well-defined.
\end{theorem}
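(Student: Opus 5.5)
The plan is a Riesz-representation argument on the Hilbert space $L^2_{\kappa,\xi}$, followed by integration by parts against test functions. Since $\xi\in\mathscr B$ is bounded with bounded boundary values and $\bar\varphi^\kappa$ is bounded and bounded below by a positive constant, the weight $\Phi(\xi)+\bar\varphi^\kappa$ is comparable to a constant. Hence $L^2_{\kappa,\xi}$ is equivalent, as a normed space, to $L^2([0,T]\times[0,1])\oplus L^2([0,T])\oplus L^2([0,T])$, the last two factors coming from the boundary atoms.

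\textbf{Step 1 (boundedness of $\mathscr T_{\kappa,\xi}$).} From $\energy_{\kappa,c}(\xi)\le M_0$ we get, for every $H\in C^{0,1}$ and $a\in\mathbb R$,
\[
a\,\mathscr T_{\kappa,\xi}(H)-c\,a^2\llangle H,H\rrangle_{\kappa,\xi}\le M_0 .
\]
Optimizing in $a$ gives $\mathscr T_{\kappa,\xi}(H)^2\le 4cM_0\llangle H,H\rrangle_{\kappa,\xi}$. Thus $\mathscr T_{\kappa,\xi}$ is a bounded linear functional on $C^{0,1}$ for the $L^2_{\kappa,\xi}$ norm.

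\textbf{Step 2 (density and representation).} $C^{0,1}([0,T]\times[0,1])$ is dense in $L^2_{\kappa,\xi}$. Indeed, functions that are smooth inside and prescribed near $u=0,1$ can approximate any interior $L^2$ function together with independent boundary functions $h_0(s),h_1(s)$ in $L^2([0,T])$: modify the approximant on shrinking layers near the boundary, which costs little in the Lebesgue part. So $\mathscr T_{\kappa,\xi}$ extends uniquely, and the Riesz theorem yields a unique $f$ with $\mathscr T_{\kappa,\xi}(H)=\llangle f,H\rrangle_{\kappa,\xi}$. Completing the square, the supremum in \eqref{eq:def_energy} equals $\sup_H\{\llangle f,H\rrangle-c\llangle H,H\rrangle\}=\frac1{4c}\|f\|^2_{\kappa,\xi}$, attained at $H=f/(2c)$ by density.

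\textbf{Step 3 (weak derivative and boundary conditions).} Write the representation explicitly:
\[
\int_0^T\!\!\int_0^1(\Phi(\xi)-\bar\varphi^\kappa)\partial_uH
=\int_0^T\!\!\int_0^1 fH(\Phi(\xi)+\bar\varphi^\kappa)+\sum_{i}\frac{1}{\kappa\lambda_i}\int_0^T f_s(i)H_s(i)(\Phi(\xi_s(i))+\bar\varphi^\kappa(i))\,ds,
\]
where $\lambda_0=\lambda$ and $\lambda_1=\delta$.

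First take $H$ compactly supported in $u$. This shows that $\Phi(\xi)-\bar\varphi^\kappa$ has weak derivative $-f(\Phi(\xi)+\bar\varphi^\kappa)\in L^2$. Since $\bar\varphi^\kappa$ is affine, $\Phi(\xi)\in L^2(0,T;\mathcal H^1)$ with the stated formula.

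Next take general $H$ and integrate by parts, using the Sobolev traces, which agree with $\Phi(\xi_t(i))$ by assumption. The bulk terms cancel, leaving
\[
(\Phi(\xi_s(1))-\bar\varphi^\kappa(1))H_s(1)-(\Phi(\xi_s(0))-\bar\varphi^\kappa(0))H_s(0)
\]
equal to the boundary Riesz terms. Choosing $H_s(0)$ and $H_s(1)$ arbitrarily (independently, for a.e.\ $s$) identifies, for example at $u=0$,
\[
-\kappa\lambda(\Phi(\xi_s(0))-\bar\varphi^\kappa(0))=f_s(0)(\Phi(\xi_s(0))+\bar\varphi^\kappa(0)).
\]
Hence
\[
\partial_u\Phi(\xi_s)(0)=\partial_u\bar\varphi^\kappa(0)+\kappa\lambda\Phi(\xi_s(0))-\kappa\lambda\bar\varphi^\kappa(0).
\]
A direct computation from \eqref{eq:limit_fug} shows $\partial_u\bar\varphi^\kappa(0)=\kappa(\lambda\bar\varphi^\kappa(0)-\alpha)$; this is exactly the Robin condition of $\bar\varphi^\kappa$. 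So the expression reduces to $\kappa(\lambda\Phi(\xi_s(0))-\alpha)$, and the case $u=1$ is analogous.

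\textbf{Step 4 (energy identity).} On the interior, $f=-(\partial_u\Phi(\xi)-\partial_u\bar\varphi^\kappa)/(\Phi(\xi)+\bar\varphi^\kappa)$. By the generalized definition of the boundary values, the same relation holds at $u=0,1$. Substituting into $\frac1{4c}\|f\|^2_{\kappa,\xi}$ gives \eqref{eq:energy_id}.

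\textbf{Main obstacle.} I expect the density in Step 2 to be the main difficulty, since the boundary atoms must be decoupled from the interior. The same issue arises in the legitimacy of choosing boundary values of $H$ freely in Step 3. The integration by parts there also needs care, because $\Phi(\xi)$ is only known to be in $\mathcal H^1$ for a.e.\ $t$.
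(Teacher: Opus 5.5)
Your proposal is correct and follows essentially the same route as the paper. Both arguments show $\mathscr T_{\kappa,\xi}$ is bounded on the dense subspace $C^{0,1}$, take the Riesz representative in $L^2_{\kappa,\xi}$, and identify the weak derivative with compactly supported test functions. Both then complete the square for the energy identity, and obtain the Robin conditions by comparing Sobolev integration by parts with the Riesz boundary terms (the paper takes $H_t(u)=h(t)(1-u)$ and $h(t)u$) together with $\partial_u\bar\varphi^\kappa(0)=\kappa(\lambda\bar\varphi^\kappa(0)-\alpha)$. The only differences are minor: your scaling in $a$ gives the sharper bound $\mathscr T_{\kappa,\xi}(H)^2\le 4cM_0\llangle H,H\rrangle_{\kappa,\xi}$ in place of the paper's $|\mathscr T_{\kappa,\xi}(H)|\le c\llangle H,H\rrangle_{\kappa,\xi}+M_0$, and you sketch the density of $C^{0,1}$ in $L^2_{\kappa,\xi}$, which the paper cites from \cite{dgn21}.
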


Our final main result establishes strong convergence of the Robin
solutions to the corresponding Neumann and Dirichlet solutions
as $\kappa\to0$ and $\kappa\to\infty$, respectively.
Its proof also relies on the energy estimate in
Theorem~\ref{thm:energy_estimate}, through the representation
and boundary identities in Theorem~\ref{thm:energy_properties}.
In particular, the passage to the Dirichlet limit requires
sufficient control of the boundary derivatives as
$\kappa\to\infty$ to show that the boundary discrepancies,
expressed through the Robin identities as boundary derivatives
divided by $\kappa$, vanish. The energy estimate is a key
ingredient in obtaining this control.

\begin{theorem}[Convergence of Weak Solutions]\label{thm:convergence}
Let $\gamma:[0,1]\to[0,\infty)$ satisfying \eqref{eq:initial_condition}, and for each $\kappa>0$, let $\rho^\kappa:[0, T]\times[0, 1]\to[0, \infty)$ denote the unique weak solution to \eqref{eq:zre_robin}. Then,
\begin{equation}
    \begin{split}
    &\mylim_{\kappa\to 0}\left\|\rho^\kappa-\rho^0\right\|_2=0 \qquad \text{and} \qquad
    \mylim_{\kappa\to \infty}\left\|\rho^\kappa-\rho^\infty\right\|_2=0,
    \end{split}
\end{equation}
where $\rho^0$ and $\rho^\infty$ denote the unique weak solutions to \eqref{eq:zre_neumann} and \eqref{eq:zre_dirichlet}, respectively.
\end{theorem}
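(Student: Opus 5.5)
The plan is a compactness argument followed by identification of the limit, with uniqueness (Proposition \ref{prop:uniqueness_zre}) upgrading subsequential convergence to convergence of the full family.

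\textbf{Step 1: uniform bounds.} From the initial condition \eqref{eq:initial_condition} and the comparison principle in the Appendix, I obtain a uniform $L^\infty$ bound. For each $\kappa$, $\rho^\kappa$ is bounded by $\Phi^{-1}(\max\{\alpha/\lambda,\beta/\delta,(\alpha+\beta)/(\lambda+\delta)\})$, using constant supersolutions or the stationary profiles $\bar\rho^\kappa$ of Remark~\ref{remark:bar_rho_kappa}. Hence $\rho^\kappa\in\mathscr B$ uniformly, and $\Phi(\rho^\kappa)$ and $\bar\varphi^\kappa$ are bounded independently of $\kappa$.

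Theorem~\ref{thm:energy_estimate} gives $\energy_{\kappa,c}(\rho^\kappa)\le M_0$. By \eqref{eq:energy_id}, and since the weight $\Phi(\xi)+\bar\varphi^\kappa$ is bounded above, this yields
\begin{equation*}
\int_0^T\!\!\int_0^1 |\partial_u\Phi(\rho^\kappa)-\partial_u\bar\varphi^\kappa|^2 \,\de u\,\de s+\frac{1}{\kappa}\sum_{i\in\{0,1\}}\int_0^T \frac{|\partial_u\Phi(\rho^\kappa_s)(i)-\partial_u\bar\varphi^\kappa(i)|^2}{\Phi(\rho^\kappa_s(i))+\bar\varphi^\kappa(i)}\de s\le C.
\end{equation*}
Since $\partial_u\bar\varphi^\kappa$ is bounded uniformly in $\kappa$ (its coefficient behaves like $\kappa/(\kappa\lambda\delta+\ldots)$), $\Phi(\rho^\kappa)$ is bounded in $L^2(0,T;\mathcal H^1)$.

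\textbf{Step 2: strong compactness.} Weak $\mathcal H^1$ bounds on $\Phi(\rho^\kappa)$ alone do not give strong convergence of $\rho^\kappa$. I will combine the spatial bound with time regularity. The weak formulation \eqref{eq:integral_zre_rob}, together with the $\mathcal H^1$ bound and the boundary terms, gives equicontinuity of $t\mapsto\langle\rho^\kappa_t,G\rangle$, in the spirit of the Hölder continuity in time from the Appendix. For $\kappa\to\infty$ the boundary term $\kappa(\beta-\delta\Phi(\rho^\kappa(1)))$ equals $\partial_u\Phi(\rho^\kappa)(1)$ by \eqref{eq:boundary_conditions}. By Step 1 it is $L^2$-bounded by $C(1+\sqrt{\kappa})$, which is not uniform. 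So in that regime I will first test only with $G$ compactly supported in $(0,1)$.

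With these ingredients, a Kruzhkov/Aubin--Lions type lemma for $\partial_t\rho=\Delta\Phi(\rho)$ gives strong $L^2$ compactness of $\Phi(\rho^\kappa)$. Strict monotonicity of $\Phi$ and the uniform $L^\infty$ bound then transfer this to $\rho^\kappa$: by a Minty argument, weak limits $\rho$ of $\rho^\kappa$ satisfy $\Phi(\rho)=\lim\Phi(\rho^\kappa)$, and strong convergence of $\Phi(\rho^\kappa)$ with $\Phi^{-1}$ continuous yields strong convergence of $\rho^\kappa$ in $L^2$.

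\textbf{Step 3: identification of the limit.} For $\kappa\to0$, pass to the limit in \eqref{eq:integral_zre_rob}. The $\kappa$-term vanishes because the boundary values are bounded. The traces $\Phi(\rho^\kappa_s(i))$ converge weakly in $L^2(0,T)$, since trace is continuous on $L^2(0,T;\mathcal H^1)$ and $\mathcal H^1$-weak convergence holds. This yields Definition~\ref{def:weak_neumann}.

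For $\kappa\to\infty$, the Robin identity \eqref{eq:boundary_conditions} gives
\begin{equation*}
\lambda\Phi(\rho^\kappa_t(0))-\alpha=\kappa^{-1}\partial_u\Phi(\rho^\kappa_t)(0).
\end{equation*}
The boundary part of the energy bounds $\int_0^T|\partial_u\Phi(\rho^\kappa_s)(0)-\partial_u\bar\varphi^\kappa(0)|^2\de s$ by $C\kappa$. Hence the right-hand side is $O(\kappa^{-1/2})$ in $L^2(0,T)$, and the traces converge to $\alpha/\lambda$ and $\beta/\delta$. This weak trace convergence is legitimate because, by the assumption at the end of Theorem~\ref{thm:energy_properties}, the boundary values coincide with the Sobolev traces. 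Testing with $G\in C_0^{1,2}$ kills the $\kappa$-term, and the $\partial_uG$ boundary terms converge, giving \eqref{eq:integral_zre_neu}. Uniqueness then gives convergence along the whole family.

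\textbf{Main obstacle.} I expect the main difficulty to be the time-compactness in Step 2 for $\kappa\to\infty$, where boundary fluxes are not uniformly controlled. I would first handle it by working with interior test functions and localized time translates. The estimate $\|\rho^\kappa_{t+h}-\rho^\kappa_t\|$ would be controlled through $\langle\rho^\kappa_{t+h}-\rho^\kappa_t,\Phi(\rho^\kappa_{t+h})-\Phi(\rho^\kappa_t)\rangle$ against a cutoff near the boundary. The boundary layer would be absorbed using the uniform $L^\infty$ bound.
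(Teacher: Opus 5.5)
Your proposal is correct and follows essentially the same route as the paper. The comparison principle plus the energy identity give the uniform $L^2(0,T;\mathcal H^1)$ bound (Lemma~\ref{lemma:Phi_L2_H1}). The cutoff/monotone-duality time-translation estimate with a compactness criterion gives strong compactness of $\Phi(\rho^\kappa)$ (Lemma~\ref{lemma:time_translations}). The Dirichlet traces are identified through the Robin identity with the $O(\kappa^{-1/2})$ boundary control coming from the energy (Lemma~\ref{lemma:rho*_boundary}), and uniqueness then gives convergence of the whole family.

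The differences are cosmetic. You pass to the limit in the un-integrated weak form using weak convergence of traces instead of weak convergence of $\partial_u\Phi(\rho^\kappa)$. Your Minty step is unnecessary, since continuity of $\Phi^{-1}$ on the bounded range already transfers strong convergence. You should state explicitly that the equicontinuity you invoke in Step~2 (Proposition~\ref{prop:time_reg}), combined with Arzel\`a--Ascoli, is what yields convergence of $\langle\rho^{\kappa_j}_t,G_t\rangle$ for every $t$ rather than only for almost every $t$.
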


The initial-profile condition
\eqref{eq:initial_condition}, imposed in both
Theorems \ref{thm:energy_estimate} and \ref{thm:convergence}, requires $\gamma$ to lie below
$\inf_{\kappa>0}\bar\rho^\kappa$ almost everywhere, as shown
in Lemma \ref{lemma:inf} and Remark \ref{remark:initial condition}. This restriction is used to construct initial
microscopic measures satisfying the stochastic domination
hypothesis required in the proof of the uniform energy estimate.
In particular, it implies that $\gamma$ is bounded.
The comparison principle in Proposition \ref{prop:comparison} then yields
\[
    \sup_{\kappa>0}
    \|\rho^\kappa\|_{L^\infty([0,T]\times[0,1])}<\infty.
\]
Together with the uniform energy estimate, this bound is used
in the compactness and boundary convergence arguments
underlying the proof of Theorem \ref{thm:convergence}.

\section{Proof of the Energy Estimate}\label{sec:energy}
In this section, we prove Theorem \ref{thm:energy_estimate}. Our strategy is based on the construction of a class of interacting particle systems such that their hydrodynamic equation is \eqref{eq:zre_robin}, namely those studied in \cite{agnr25}. We start by recalling the dynamics of these models.

\subsection{The Model} \label{sec:zrp}
For a positive integer $N$, let $I_N$ denote the discrete interval $\{1,\ldots, N-1\}$. This constitutes the discrete space where a set of indistinguishable particles will be moving around, and we will call it the \textit{bulk}. For each $x \in I_N$, the occupation variable $\eta(x)$ will represent the number of particles at site $x$. The \textit{zero-range process} describes the random evolution of particles with no restriction on the total number of particles per site, and therefore its state space of configurations $\eta$ is the set $\Omega_N := \mathbb{N}^{I_N}$.

The rates of the process are defined through a function $g :\mathbb{N}\to [0, \infty)$ satisfying  $g(0)=0$. This last identity mirrors the fact that if there are no particles at the site $x$, then the rate for a jump is null. Moreover, we assume that $g$ is strictly positive on the set of positive integers and that it has bounded increments  in the sense of \eqref{eq:function_g}. This condition ensures that the process is well-defined; see \cite{lig05} for details. 

We consider the Markov process whose infinitesimal generator $\mathcal{L}^{N, \kappa}$ is given by
\begin{equation}\label{eq:generator}
    \mathcal{L}^{N, \kappa}:=\mathcal{L}^N_0 + \kappa \mathcal{L}^N_b,
\end{equation} 
where $\kappa>0$ and where $\mathcal{L}^N_0$ and $\mathcal{L}^N_b$ represent the infinitesimal generators of the bulk dynamics and the boundary dynamics, respectively. These generators act on maps $f:\Omega_N \to \mathbb{R}$ via
\begin{equation*}
    \mathcal{L}^N_0f(\eta):=\sum_{\substack{x, y\in I_N \\ |x-y|=1}} g(\eta(x)) \left[f(\eta^{x,y})-f(\eta)\right]
\end{equation*} 	
and $\mathcal{L}^N_b:=\mathcal{L}^N_l+\mathcal{L}^N_r$, where 
\begin{equation}\label{eq:boundary_gen}
    \begin{split}
    &\mathcal{L}^N_l f(\eta):=\frac{\alpha}{N}\left[f(\eta^{1+})-f(\eta)\right] +\frac{\lambda g(\eta(1))}{N}\left[f(\eta^{1-})-f(\eta)\right],
    \\&\mathcal{L}^N_r f(\eta):=\frac{\beta}{N}\left[f(\eta^{(N-1)+})-f(\eta)\right]+\frac{\delta g(\eta(N-1))}{N}\left[f(\eta^{(N-1)-})-f(\eta)\right],
    \end{split}
\end{equation}
with
\begin{equation*}
    \eta^{x,y}(z):=\left\{\begin{array}{ll}
    \eta(z)-1, & \mbox{~~if~~} z=x,\\
    \eta(z)+1, &\mbox{~~if~~} z=y,\\
    \eta(z), & \mbox{~~if~~} z\ne x,y
    \end{array}\right.
\end{equation*}
and 
\begin{equation*}
    \eta^{x\pm}(z):=\left\{\begin{array}{ll}
    \eta(z) \pm 1, & \mbox{~~if~~} z=x,\\
    \eta(z), & \mbox{~~if~~} z\ne x.
    \end{array}\right.
\end{equation*}

Above, the parameters $\alpha, \beta, \lambda, \delta>0$ are related to density of the reservoirs. We will denote by $\{\eta_t=\eta_t^{N, \kappa}, t\ge0\}$ the Markov process on $\Omega_N$ with generator $N^2\mathcal{L}^{N, \kappa}$. 

\begin{figure}
\begin{center}
\begin{tikzpicture}[thick, scale=0.8][h!]    
    \draw[step=1cm,gray,very thin] (-6,0) grid (6,0);   
    \foreach \y in {-6,...,6}{
    \draw[color=black,very thin] (\y,-0.2)--(\y,0.2);}    
    \draw (-6,-0.5) node [color=black] {$\scriptstyle{1}$};
    \draw (-5,-0.5) node [color=black] {$\scriptstyle{2}$};
    \draw (-4,-0.5) node [color=black] {$\scriptstyle{3}$};
    \draw (-2,-0.5) node [color=black] {$\scriptstyle{...}$};
    \draw (-1,-0.5) node [color=black] {$\scriptstyle{x-1}$};
    \draw (0,-0.5) node [color=black] {$\scriptstyle{x}$};
    \draw (1,-0.5) node [color=black] {$\scriptstyle{x+1}$};
    \draw (3,-0.5) node [color=black] {$\scriptstyle{...}$};
    \draw (5,-0.5) node [color=black] {$\scriptstyle{N\!-\!2}$};
    \draw (6,-0.5) node [color=black] {$\scriptstyle{N\!-\!1}$};
        
    \node[ball color=ballblue!45!, shape=circle, minimum size=0.5cm] at (-6,1.2) {};
    \node[ball color=ballblue!45!, shape=circle, minimum size=0.5cm]  at (-6.,0.4) {};
    \node[shape=circle,minimum size=0.63cm] (A) at (-6.,2) {};
    \node[shape=circle,minimum size=0.5cm] (C) at (-6.,1.3) {};
    \node[shape=circle,minimum size=0.5cm] (D) at (-4.9,1.3) {};
    \path [->] (C) edge[bend left=60] node[above right] {{\footnotesize{$g(\eta(1))$}}} (D);
    \node[shape=circle,minimum size=0.63cm] (OO) at (-7.1,1.3) {};
    \path [<-] (OO) edge[bend left=60] node[above] {{\footnotesize{$\frac{\lambda}{N}g(\eta(1))$}}} (C);	
    \node[shape=circle,minimum size=0.63cm] (OO0) at (-7.1,-0.4) {};
    \node[shape=circle,minimum size=0.63cm] (C0) at (-6,-0.4) {};		
    \path [<-] (C0) edge[bend left=60] node[below ] {{\footnotesize{$\frac{\alpha}{N}$}}} (OO0);
    
    \node[ball color=ballblue!45!, shape=circle, minimum size=0.5cm]  at (6.,0.4) {};
    \node[ball color=ballblue!45!, shape=circle, minimum size=0.5cm] at (6.,1.2) {};
    \node[ball color=ballblue!45!, shape=circle, minimum size=0.5cm] at (6.,2) {};
    \node[shape=circle,minimum size=0.5cm] (E) at (6.,2.1) {};
    \node[shape=circle,minimum size=0.5cm] (F) at (4.8,2.1) {};
    \path [->] (E) edge[bend right =60] node[above left] {{\footnotesize{$g(\eta(N-1))$}}} (F);
    \node[ball color=ballblue!45!, shape=circle, minimum size=0.5cm]  at (5.,0.4) {};
    \node[shape=circle,minimum size=0.5cm] (H) at (7.2,2.1) {};
    \path [->] (E) edge[bend left=60] node[above] {
    {{\footnotesize{$\qquad\frac{\delta }{N}g(\eta(N-1))$}}}}(H);		
    \node[shape=circle,minimum size=0.5cm] (H0) at (7.1,-0.4) {}; 
    \node[shape=circle,minimum size=0.5cm] (E0) at (6,-0.4) {};		
    \path [->] (H0) edge[bend left=60] node[below ] {{\footnotesize{$\frac{\beta}{N}$}}} (E0);
    
    \node[ball color=ballblue!45!, shape=circle, minimum size=0.5cm] (O) at (-4,0.4) {};
    \node[ball color=ballblue!45!, shape=circle, minimum size=0.5cm]  at (-3.,0.4) {};
    \node[ball color=ballblue!45!, shape=circle, minimum size=0.5cm]  at (-3.,1.2) {};
    \node[ball color=ballblue!45!, shape=circle, minimum size=0.5cm]  at (-3.,2) {};
    \node[ball color=ballblue!45!, shape=circle, minimum size=0.5cm]  at (-1.,0.4) {};
    \node[ball color=ballblue!45!, shape=circle, minimum size=0.5cm]  at (-1.,1.2) {};		
    \node[ball color=ballblue!45!, shape=circle, minimum size=0.5cm]  at (0.,0.4) {};
    \node[ball color=ballblue!45!, shape=circle, minimum size=0.5cm]  at (0.,1.2) {};
    \node[ball color=ballblue!45!, shape=circle, minimum size=0.5cm]  at (0.,2) {};
    \node[ball color=ballblue!45!, shape=circle, minimum size=0.5cm]  at (0.,2.8) {};		
    \node[ball color=ballblue!45!, shape=circle, minimum size=0.5cm]  at (1.,0.4) {};
    \node[ball color=ballblue!45!, shape=circle, minimum size=0.5cm]  at (1.,1.2) {};		
    \node[ball color=ballblue!45!, shape=circle, minimum size=0.5cm]  at (2.,0.4) {};		
    \node[ball color=ballblue!45!, shape=circle, minimum size=0.5cm]  at (-1,0.4) {};
        
    \node[shape=circle,minimum size=0.63cm] (G1) at (0,3) {};
    \node[shape=circle,minimum size=0.63cm] (H1) at (1,2.8) {};
    \path [->] (G1) edge[bend left =60] node[above] {{\footnotesize{$\quad g(\eta(x))$}}} (H1);		
    \node[shape=circle,minimum size=0.63cm] (J1) at (-1,2.8) {};
    \path [<-] (J1) edge[bend left =60] node[above] {{\footnotesize{$g(\eta(x))\quad$}}} (G1);
\end{tikzpicture}
\caption{Microscopic dynamics of the symmetric zero-range process with open boundaries for $\kappa=1$.}\label{fig.1}
\end{center}
\end{figure}

Let $\varphi_N^\kappa:I_N\to[0, \infty)$ be a bounded measurable function. We define the product measure $\bar{\nu}_{\varphi_N^\kappa}$ on $\Omega_N$ with marginals given by 
\begin{equation}\label{eq:prod_measure}
    \bar{\nu}_{\varphi_N^\kappa}\{\eta: \eta(x)=j\}:=\frac{1}{Z(\varphi_N^\kappa(x))}\frac{(\varphi_N^\kappa(x))^j}{g(j)!},
\end{equation}
for $x\in I_N$, with $g(j)!$ and the partition function $Z$ defined in Section \ref{sec:assumptions_Phi}. Denoting by $E_{\nu}[\,\cdot\,]$ the expectation with respect to a measure $\nu$ on $\Omega_N$, a simple computation shows that
\begin{equation*}
    E_{\bar{\nu}_{\varphi_N^\kappa}}[\eta(x)] = \frac{1}{Z(\varphi_N^\kappa(x))}\sum_{j=0}^\infty j\frac{(\varphi_N^\kappa(x))^j}{g(j)!}.
\end{equation*}
Recalling the definition of $R:[0,\varphi^*)\to[0, \infty)$ in \eqref{eq:R}, we can rewrite the expected value of the occupation variable at site $x$ under $\bar{\nu}_{\varphi_N^\kappa}$ as
\begin{equation}\label{eq:expected_R}
    E_{\bar\nu_{\varphi_N^\kappa}}[\eta(x)] = R(\varphi_N^\kappa(x)), \quad x\in I_N.
\end{equation}

The next result can be found in \cite[Equation 20]{lms05}.
\begin{lemma}\label{lemma:inv_measure}
For $\alpha, \lambda, \beta$ and  $\delta$  satisfying 
\begin{equation}\label{eq:params_condition}
    {\mymax\left\{\frac{\alpha}{\lambda}, \frac{\beta}{\delta}\right\}<\varphi^*,}
\end{equation}
and $\kappa>0$ and $N\geq 2$,
the measure $\bar\nu_{\bar{\varphi}_N^\kappa}$, which will be denoted by $\bar{\nu}_{N}^\kappa$, defined in \eqref{eq:prod_measure} with fugacity profile
\begin{equation}\label{eq:fugacity}
    \bar \varphi_N^\kappa(x):=\frac{-\kappa(\alpha \delta - \beta \lambda)(x-1)+\kappa\alpha \delta (N-2) +(\alpha+\beta)N}{\kappa\lambda \delta (N-2)+(\lambda+ \delta)N}, \quad x \in I_N,
\end{equation}
is the unique invariant probability for the Markov process on $\Omega_N$ with infinitesimal generator $\mathcal{L}^{N, \kappa}$, defined in \eqref{eq:generator}. 
\end{lemma}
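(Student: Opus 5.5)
The plan is to verify directly that $\bar\nu_N^\kappa$ is stationary, i.e. $E_{\bar\nu_N^\kappa}[\mathcal L^{N,\kappa} f]=0$ for all bounded local $f$. Uniqueness will then follow from irreducibility: from any configuration the boundary dynamics can remove or create particles at sites $1$ and $N-1$ with positive rate, and the bulk dynamics moves particles between neighbours, so the countable-state chain is irreducible. A chain of this kind has at most one invariant probability. Positive recurrence, and hence existence of the invariant probability, comes from exhibiting $\bar\nu_N^\kappa$ itself. Here we need $\bar\varphi_N^\kappa(x)<\varphi^*$ for all $x$, so that $Z(\bar\varphi_N^\kappa(x))<\infty$ and the product measure is a well-defined probability. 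This holds because \eqref{eq:fugacity} is affine in $x$ and lies between $\alpha/\lambda$ and $\beta/\delta$ up to convex combination: its values at the fictitious sites $x=0$ and $x=N$ are convex combinations of these reservoir fugacities. Condition \eqref{eq:params_condition} then gives the bound.

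For stationarity I will use the standard product-measure identity for zero-range. Under $\bar\nu_\varphi$ with marginals \eqref{eq:prod_measure}, for any $f$,
\[
E[g(\eta(x))f(\eta^{x,y})]=\frac{\varphi(x)}{\varphi(y)}E[g(\eta(y))f(\eta)],\qquad E[g(\eta(x))f(\eta^{x-})]=\varphi(x)E[f(\eta)],
\]
and $E[f(\eta^{x+})]=\varphi(x)^{-1}E[g(\eta(x))f(\eta)]$. These follow by the change of variables $\eta\mapsto\eta^{x,y}$ or $\eta^{x\pm}$ together with $g(j)!=g(j)\,g(j-1)!$. Applying them turns $E[\mathcal L^{N,\kappa}f]$ into $E\big[f(\eta)\sum_x c_x(g(\eta(x)),\dots)\big]$. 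It therefore suffices to show that the resulting multiplier vanishes identically. Writing $\varphi=\bar\varphi_N^\kappa$, the multiplier is
\begin{align*}
&\sum_{x}\frac{g(\eta(x))}{\varphi(x)}\big(\varphi(x-1)+\varphi(x+1)-2\varphi(x)\big)\\
&\quad+\text{boundary terms at } x=1,N-1 .
\end{align*}
In the bulk this cancels because $\varphi$ is affine. At $x=1$ the only unmatched bulk contribution is $\frac{g(\eta(1))}{\varphi(1)}(\varphi(2)-\varphi(1))$, and the left generator contributes $\frac{\kappa}{N}\big[\frac{\alpha}{\varphi(1)}g(\eta(1))-\alpha+\lambda\varphi(1)-\lambda g(\eta(1))\big]$. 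Similar terms appear at $x=N-1$.

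The key step is collecting coefficients. The coefficient of $g(\eta(1))/\varphi(1)$ vanishes iff
\[
\varphi(2)-\varphi(1)=\tfrac{\kappa}{N}\big(\lambda\varphi(1)-\alpha\big),
\]
and symmetrically $\varphi(N-1)-\varphi(N-2)=\tfrac{\kappa}{N}(\beta-\delta\varphi(N-1))$. The constant terms $\frac{\kappa}{N}(\lambda\varphi(1)-\alpha+\delta\varphi(N-1)-\beta)$ must also cancel. Given the two discrete Robin relations, this is equivalent to the statement that the slope is constant, because the two relations give $(\lambda\varphi(1)-\alpha)=(\beta-\delta\varphi(N-1))$.

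The main, if routine, obstacle is checking that \eqref{eq:fugacity} solves this linear system. I would write $\varphi(x)=a+b(x-1)$ and solve $b=\frac{\kappa}{N}(\lambda a-\alpha)$ and $b=\frac{\kappa}{N}(\beta-\delta(a+b(N-2)))$. This gives $b=-\kappa(\alpha\delta-\beta\lambda)/D$ and $a=(\kappa\alpha\delta(N-2)+(\alpha+\beta)N)/D$ with $D=\kappa\lambda\delta(N-2)+(\lambda+\delta)N$, matching \eqref{eq:fugacity}. Justifying the interchange of sums for unbounded $g$ only needs $E[g(\eta(x))]=\varphi(x)<\infty$, which holds since $g$ has linear growth and $\varphi<\varphi^*$. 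We can then restrict to bounded cylinder $f$, which form a core.
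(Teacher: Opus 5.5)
\textbf{Comparison with the paper.} The paper gives no argument of its own for this lemma; it imports it from \cite[Equation 20]{lms05}. Your direct verification is a valid self-contained substitute, and its core computation is correct. The three change-of-variables identities hold; they are the ratios $\bar\nu_N^\kappa(\eta^{x+1,x})/\bar\nu_N^\kappa(\eta)$ and $\bar\nu_N^\kappa(\eta^{1+})/\bar\nu_N^\kappa(\eta)$ that the paper itself uses later, in the proof of the energy estimate. The coefficients of $E[g(\eta(1))f]$ and $E[g(\eta(N-1))f]$ vanish exactly under your two discrete Robin relations, the constant term then cancels, and solving for $a,b$ reproduces \eqref{eq:fugacity}. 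What your route adds over the citation is the identification of $\bar\varphi_N^\kappa$ as the affine solution of a discrete Laplace equation with discrete Robin conditions. This also makes Remark~\ref{remark:limit_fug} transparent.

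\textbf{The wrong step: well-definedness.} The values of the affine extension at the fictitious sites $x=0$ and $x=N$ are \emph{not} convex combinations of $\alpha/\lambda$ and $\beta/\delta$ in general. From your relation $b=\frac{\kappa}{N}(\lambda\varphi(1)-\alpha)$ one gets
$\varphi(0)-\frac{\alpha}{\lambda}=b\bigl(\frac{N}{\kappa\lambda}-1\bigr)$ and $\varphi(N)-\frac{\beta}{\delta}=b\bigl(1-\frac{N}{\kappa\delta}\bigr)$.
So for $b\neq0$ and $\kappa>N/\min\{\lambda,\delta\}$, both values lie outside the interval. For instance, $\alpha=\lambda=\delta=1$, $\beta=2$, $N=3$ gives $\varphi(x)=(\kappa x+9)/(\kappa+6)$, with $\varphi(0)<1$ and $\varphi(3)>2$ once $\kappa>3$. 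In that regime $\varphi(N)$ may exceed $\varphi^*$, and your argument gives no bound.

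The fix is to use the actual boundary sites. Your Robin relations give $\lambda\varphi(1)-\alpha=\beta-\delta\varphi(N-1)=Nb/\kappa$, and $b$ has the sign of $\beta/\delta-\alpha/\lambda$. Monotonicity of the affine profile then yields
$\min\{\alpha/\lambda,\beta/\delta\}\le\bar\varphi_N^\kappa(x)\le\max\{\alpha/\lambda,\beta/\delta\}<\varphi^*$ for every $x\in I_N$.
Equivalently, the profile equals $\alpha/\lambda$ at $x=1-N/(\kappa\lambda)$ and $\beta/\delta$ at $x=N-1+N/(\kappa\delta)$, and these points bracket $I_N$. This is exactly Remark~\ref{remark:barphi_bounds}.

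\textbf{Two smaller omissions.}
\begin{enumerate}[i)]
\item For $N=2$ there is no bulk edge, and the site $1=N-1$ receives both boundary generators. The condition is then $(\lambda+\delta)\varphi(1)=\alpha+\beta$. Your system still produces this after eliminating $b$, but the derivation you wrote (with an unmatched bulk term $\varphi(2)-\varphi(1)$) does not apply, so treat this case separately.
\item Passing from $E_{\bar\nu_N^\kappa}[\mathcal L^{N,\kappa}\mathbf{1}_{\xi}]=0$, i.e.\ global balance, to invariance and positive recurrence requires non-explosion. Appealing to a ``core'' of bounded cylinder functions is not the right justification with unbounded rates. Non-explosion holds because particles enter at bounded total rate and all rates grow at most linearly in the particle number, since $g(k)\le g^*k$. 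State this before invoking uniqueness via irreducibility.
\end{enumerate}
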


\begin{remark}\label{remark:barphi_bounds}
For every $\kappa>0$, $N\ge2$, and $x\in I_N$, the stationary
fugacity profile satisfies
\begin{equation*}
    \mymin\left\{\frac{\alpha}{\lambda},
                 \frac{\beta}{\delta}\right\}
    \le \bar\varphi_N^\kappa(x)
    \le
    \mymax\left\{\frac{\alpha}{\lambda},
                 \frac{\beta}{\delta}\right\}.
\end{equation*}
Consequently, the profiles $\bar\varphi_N^\kappa$ are uniformly
bounded above and away from zero, with bounds independent of
$\kappa$ and $N$.
\end{remark}

\begin{remark}\label{remark:limit_fug}
Let $\alpha,\lambda,\beta,\delta>0$. The discrete fugacity
profiles $\bar\varphi_N^\kappa$ approximate the corresponding
macroscopic profiles $\bar\varphi^\kappa$, defined in
\eqref{eq:limit_fug}, with an error of order $N^{-1}$,
uniformly in $\kappa>0$. More precisely, for every $N\ge2$,
\[
    \mysup_{\kappa>0}\mysup_{x\in I_N}
    \left|
    \bar\varphi^\kappa\!\left(\frac{x}{N}\right)
    -\bar\varphi_N^\kappa(x)
    \right|
    \le
    \frac{1}{N}
    \left|\frac{\alpha}{\lambda}-\frac{\beta}{\delta}\right|.
\]
Indeed, a direct computation gives
\[
    \left|
    \bar\varphi^\kappa\!\left(\frac{x}{N}\right)
    -\bar\varphi_N^\kappa(x)
    \right|
    =
    \frac{
    \kappa|\alpha\delta-\beta\lambda|\,
    \left|\delta-\lambda+
    \kappa\lambda\delta\left(\frac{2x}{N}-1\right)\right|
    }{
    (\kappa\lambda\delta+\lambda+\delta)
    \bigl[\kappa\lambda\delta(N-2)+(\lambda+\delta)N\bigr]
    }.
\]
Since $x\in I_N$, we have $|2x/N-1|\le (N-2)/N$, so that
\[
    \left|\delta-\lambda+
    \kappa\lambda\delta\left(\frac{2x}{N}-1\right)\right|
    \le
    \frac{\kappa\lambda\delta(N-2)+(\lambda+\delta)N}{N}.
\]
Consequently,
\[
    \left|
    \bar\varphi^\kappa\!\left(\frac{x}{N}\right)
    -\bar\varphi_N^\kappa(x)
    \right|
    \le
    \frac{\kappa|\alpha\delta-\beta\lambda|}
    {N(\kappa\lambda\delta+\lambda+\delta)}
    \le
    \frac{1}{N}
    \left|\frac{\alpha}{\lambda}-\frac{\beta}{\delta}\right|,
\]
which proves the claimed uniform bound.
\end{remark}

Throughout, we assume that $g$ satisfies \textsc{Assumption (ND)} and \textsc{Assumption (SG)}, given in Section \ref{sec:assumptions_Phi}. These conditions are required as technical assumptions for the results of \cite{agnr25}, which we will need in what follows.

\begin{remark}\label{remark:spectral_gap}
It is worth noting that \textsc{Assumption (SG)} is only used in the proofs of the one-block and two-block estimates in \cite{agnr25}. In fact, we do not need a lower bound of order $\big(\ell(1+\frac{j}{\ell})\big)^{-2}$ for the spectral gap on a box of size $\ell$ with $j$ particles, as long as, for $j\lesssim \ell$, we have \textit{any} positive lower bound $c(\ell)$ depending only on the rate $g$ and the size of the box, decaying at most polynomially in $\ell$, so that $\ell^3/c(\ell)=o(N^2)$ uniformly in $j\lesssim\ell$. 
\end{remark}

We highlight that in particular, under \textsc{Assumption (ND)}, the boundary-driven zero-range process is \textit{attractive}; see \cite[Lemma 2.6]{agnr25}. This means that the dynamics are monotone in the sense that the semigroup 
$\{S(t)\}_{t\ge 0}$ preserves the stochastic order. More precisely, for any 
probability measures $\mu_1 \le \mu_2$, we have $\mu_1 S(t) \le \mu_2 S(t)$ for all $t\ge 0$, where $(S(t)f)(\eta) := \mathbb{E}_\eta[f(\eta_t)]$, and $\mu S(t)$ denotes 
the distribution at time $t$ of the process started from $\mu$. The partial order is defined as follows. Given configurations $\eta,\xi \in \Omega_N$, 
we write $\eta \le \xi$ if $\eta(x) \le \xi(x)$ for all $x \in I_N$. A function 
$f:\Omega_N \to \mathbb{R}$ is said to be \textit{monotone} if $f(\eta) \le f(\xi)$ 
whenever $\eta \le \xi$. Finally, for two probability measures $\mu_1,\mu_2$, we say that $\mu_1 \le \mu_2$ if $\int f \de \mu_1 \le \int f \de\mu_2$ for all monotone functions $f:\Omega_N \to \mathbb{R}$.

\subsection{Hydrodynamic Limit}
We now recall the main result of \cite{agnr25}. Let $\mathcal{M}_+$ be the space of finite, positive measures on $[0,1]$ endowed with the weak topology. Given a configuration $\eta\in\Omega_N$, we associate to it the empirical measure on $[0,1]$ defined by
\begin{equation*}
    \pi^{N, \kappa}_t(\de u) := \frac{1}{N} \sum_{x\in I_N} \eta_t(x) \delta_{\frac{x}{N}}(\de u),
\end{equation*}
where $\delta_u$ stands for the Dirac measure at $u\in[0,1]$. For a measurable function $H:[0,1]\to\mathbb R$, we denote its integral with respect to the measure $\pi_t^{N, \kappa}$ by
\begin{equation*}
    \langle \pi_t^{N, \kappa}, H\rangle := \frac{1}{N}\sum_{x\in I_N} \eta_t(x) H\Big(\frac xN\Big).
\end{equation*} 
Given a measure $\mu^N$ on $\Omega^N$, we denote by $\prob_{\mu^N}^\kappa$ the probability on the Skorokhod space of càdlàg trajectories $\mathcal D([0,T], \Omega_N)$ corresponding to the jump process $\{\eta_t, t\in[0, T]\}$ with generator $N^2\mathcal{L}^{N, \kappa}$ and initial distribution $\mu^N$. Moreover, let $Q^N_\kappa$ denote the probability on $\mathcal D([0,T], \mathcal M_+)$ defined by $Q^N_ \kappa:=\prob_{\mu^N}^\kappa(\pi^{N, \kappa})^{-1}$.

\begin{definition}\label{def:associated_profile}
A sequence $\{\mu^N\}_N$ of probability measures on $\Omega_N$ is said to be \textit{associated} to a measurable and integrable profile $\gamma:[0,1]\to\mathbb[0, \infty)$ if,  for any $\eps>0$ and any continuous function $H:[0,1]\to\mathbb{R}$, the following limit holds:
\begin{equation}\label{eq:initial_profile}
    \mylim_{N\to\infty}
    \mu^N \left\{\eta\in\Omega_N: \left|\frac{1}{N} \sum_{x=1}^{N-1} H\left(\frac{x}{N}\right)\eta(x)
    - \int_0^1 H(u)\gamma(u)\de u \right| > \eps\right\} = 0.
\end{equation}
\end{definition}

Given two measures $\mu, \nu$ on $\Omega_N$, we denote by $H(\mu\,|\,\nu)$ their relative entropy
\begin{equation*}
    H(\mu\,|\,\nu):=\int_{\Omega_N }\mylog\left(\frac{\mu}{\nu}\right)\de \mu.
\end{equation*}
The result below can be found in \cite[Theorem 2.12]{agnr25}.
\begin{theorem}\label{thm:hydro} {{Assume \eqref{eq:params_condition}.}}
Let $\gamma:[0,1]\to[0, \infty)$ be a  measurable  and integrable function and let $\{\mu^N\}_N$ be a sequence of probability measures on $\Omega_N$ associated to $\gamma$, {{in the sense of Definition \ref{def:associated_profile}}} and satisfying the two conditions 
\begin{equation}\label{eq:initial_assumptions}
    \mysup_{\kappa>0}H(\mu^N|\bar\nu_N^\kappa)\lesssim N \quad \textrm{and} \quad \mu^N\le\bar\nu_N^\kappa.
\end{equation}
Then, for each $\kappa>0$ and every $t\in [0,T]$, the sequence of empirical measures $\{\pi^{N, \kappa}_t\}_N$ converges in probability to the absolutely continuous measure $\rho^\kappa_t(u)\de u$, that is, for any $\eps>0$ and for any $H\in C([0,1])$, we have that
\begin{equation*}
    \mylim_{N\to\infty}\prob_{\mu^N}^\kappa \left\{\left|\langle \pi^{N, \kappa}_{t},H\rangle-\int_0^1\rho_t^\kappa(u)H(u)\de u \right|>\eps\right\} =0,
\end{equation*}
where $\rho_t^\kappa(u)$ is the unique weak solution of \eqref{eq:zre_robin} with initial condition $\gamma$.   
\end{theorem}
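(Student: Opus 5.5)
The statement is the hydrodynamic limit of \cite[Theorem 2.12]{agnr25}. I would prove it with the entropy method of Guo, Papanicolaou and Varadhan: show tightness of $\{Q^N_\kappa\}_N$ in $\mathcal D([0,T],\mathcal M_+)$, show that every limit point is concentrated on trajectories $\rho^\kappa_t(u)\de u$ with $\rho^\kappa$ a weak solution of \eqref{eq:zre_robin} in the sense of Definition~\ref{def:weak_robin}, and then use uniqueness (Proposition~\ref{prop:uniqueness_zre}). Uniqueness upgrades subsequential convergence to convergence of the whole sequence, and since the limit is deterministic we obtain convergence in probability at each fixed $t$.

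\textbf{A priori bounds and tightness.} Attractiveness (\cite[Lemma 2.6]{agnr25}), together with $\mu^N\le\bar\nu^\kappa_N$ and the invariance of $\bar\nu^\kappa_N$ (Lemma~\ref{lemma:inv_measure}), gives $\mu^N S(t)\le\bar\nu^\kappa_N$ for all $t$. By Remark~\ref{remark:barphi_bounds} the fugacities of $\bar\nu^\kappa_N$ stay below $\max\{\alpha/\lambda,\beta/\delta\}<\varphi^*$. Hence all moments of $\eta_t(x)$ are bounded uniformly in $N,x,t$. This yields:
\begin{itemize}
\item control of the total mass;
\item via \eqref{eq:function_g}, control of $g(\eta_t(x))\le g^*\eta_t(x)$;
\item absolute continuity of limit points, with density $\rho\le$ a deterministic constant.
\end{itemize}
For $G\in C^{1,2}$, Dynkin's formula writes $\langle\pi^{N,\kappa}_t,G_t\rangle$ as its initial value plus $\int_0^t N^2\mathcal L^{N,\kappa}\langle\pi_s,G_s\rangle\de s$ plus a martingale. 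The martingale has quadratic variation of order $N^{-1}$, using the moment bounds. After summation by parts, the integral term becomes
\[
\frac1N\sum_x g(\eta(x))\Delta_N G(x/N),
\]
plus the boundary terms $g(\eta(1))\partial_uG(0)$ and $g(\eta(N-1))\partial_uG(1)$, plus the $\kappa$-terms $\kappa(\alpha-\lambda g(\eta(1)))G(0)$ and $\kappa(\beta-\delta g(\eta(N-1)))G(1)$. These are bounded in $L^1$ uniformly, so Aldous' criterion gives tightness.

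\textbf{Identification of the limit.} The relative entropy satisfies $H(\mu^N S(t)\,|\,\bar\nu^\kappa_N)\lesssim N$. Differentiating it in time gives a bound $\int_0^T D_N(f_s)\de s\lesssim N^{-1}$ on the Dirichlet form, up to boundary error terms that are controlled because $\bar\nu_N^\kappa$ is invariant. With these bounds, the one-block and two-block estimates replace $g(\eta(x))$ by $\Phi$ of the empirical density on a mesoscopic box $\eps N$. The one-block step uses the spectral gap \textsc{Assumption (SG)} and the equivalence of ensembles; the two-block step moves from boxes of size $\ell$ to size $\eps N$ using the Dirichlet bound. The unbounded occupation numbers are handled by truncating at a large density, using the domination by $\bar\nu_N^\kappa$.

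\textbf{Main obstacle: the boundary.} The step I expect to be hardest is the replacement of $g(\eta(1))$ and $g(\eta(N-1))$ by $\Phi(\rho_s(0))$ and $\Phi(\rho_s(1))$. A single site carries no law of large numbers. My approach is to first replace $g(\eta(1))$ by the average of $g$ over the box $\{1,\dots,\ell\}$, using the Dirichlet form bound on the bonds near the left edge. I would then apply the one-block estimate to that box. Finally, I would pass to $\Phi$ of the average density over $\{1,\dots,\eps N\}$ and let $\eps\to0$. This last limit requires that $\Phi(\rho)\in L^2(0,T;\mathcal H^1)$, so that its traces make sense and coincide with the limit of the box averages. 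That regularity I would obtain from a microscopic energy estimate: bound
\[
\mathbb E\,\sup_H\Big\{\int_0^T\langle\Phi(\rho_s),\partial_uH_s\rangle\de s-c\|H\|^2_2\Big\}
\]
via the Feynman--Kac formula and the variational formula for the largest eigenvalue. Once these replacements are in place, the limiting identity is exactly \eqref{eq:integral_zre_rob}. Condition~i) of Definition~\ref{def:weak_robin} follows from the $L^\infty$ bound and condition~ii) from the energy estimate, which concludes the proof.
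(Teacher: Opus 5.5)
Your proposal is correct and follows the same route as the proof the paper relies on. The paper does not reprove this theorem; it cites \cite[Theorem 2.12]{agnr25}, whose entropy-method proof has the same ingredients you list: attractiveness and domination by $\bar\nu_N^\kappa$ for uniform moment bounds, tightness, bulk and boundary replacement lemmas via one- and two-block estimates under \textsc{Assumption (SG)}, an energy estimate giving $\Phi(\rho)\in L^2(0,T;\mathcal H^1)$ so that boundary traces make sense, and uniqueness of weak solutions. One minor difference is that you get the $L^\infty$ bound on limit densities directly from the stochastic domination $\mu^N S(t)\le\bar\nu_N^\kappa$ rather than from the PDE comparison principle; this is legitimate and avoids any appeal to the limiting equation.
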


{{In fact, the result we prove is the following proposition, which implies the result stated above.}}
\begin{proposition}\label{corol:hydro} Under the hypotheses of Theorem \ref{thm:hydro}, for each $\kappa>0$, the sequence of measures $\{Q_\kappa^N\}_N$ converges weakly to $Q_\kappa^*$ as $N\to\infty$, where $Q_\kappa^*$ is a Dirac measure on the path $\pi_t^\kappa(u)=\rho_t^\kappa(u)\de u$ with $\rho^\kappa$ unique weak solution to \eqref{eq:zre_robin}.
\end{proposition}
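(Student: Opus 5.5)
The approach is the standard entropy method for hydrodynamic limits: tightness of $\{Q^N_\kappa\}_N$, characterization of every limit point as concentrated on weak solutions of \eqref{eq:zre_robin} in the sense of Definition~\ref{def:weak_robin}, and then uniqueness (Proposition~\ref{prop:uniqueness_zre}) to conclude that the whole sequence converges to the Dirac mass $Q^*_\kappa$.

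\textbf{Tightness and absolute continuity.} The condition $\mu^N\le\bar\nu^\kappa_N$, attractiveness (\cite[Lemma 2.6]{agnr25}) and stationarity of $\bar\nu^\kappa_N$ (Lemma~\ref{lemma:inv_measure}) give $\mu^NS(t)\le\bar\nu^\kappa_N$ for all $t$. By Remark~\ref{remark:barphi_bounds}, all moments of $\eta_t(x)$ are then bounded uniformly in $N,x,t$.
\begin{enumerate}[i)]
\item Tightness in $\mathcal D([0,T],\mathcal M_+)$ follows from Aldous' criterion applied to the Dynkin martingales $M^{N}_t(H)=\langle\pi^{N,\kappa}_t,H\rangle-\langle\pi^{N,\kappa}_0,H\rangle-\int_0^tN^2\mathcal L^{N,\kappa}\langle\pi^{N,\kappa}_s,H\rangle\de s$ with $H\in C^2$.
\item The integral terms are controlled by $\|H''\|_\infty$ and the bounded mean of $g(\eta(x))\le g^*\eta(x)$.
\item The quadratic variations are of order $N^{-1}$.
\item The same domination shows that limit points are concentrated on trajectories $\rho_t(u)\de u$ with $\rho$ bounded, which gives item i) of the definition.
\end{enumerate}

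\textbf{Identification of the equation.} Computing $N^2\mathcal L^{N,\kappa}\langle\pi^N,G\rangle$ yields a bulk term $\frac1N\sum_x g(\eta(x))\Delta_NG(x/N)$. It also yields boundary terms such as $\partial_uG(0)\,g(\eta(1))$ and $\kappa G(1/N)(\alpha-\lambda g(\eta(1)))$, together with their analogues at $N-1$.
\begin{enumerate}[i)]
\item The replacement lemma (one-block and two-block estimates, using \textsc{Assumption (SG)} and the entropy bound $H(\mu^N|\bar\nu^\kappa_N)\lesssim N$ through the Dirichlet form of $\mathcal L^{N,\kappa}$) replaces $g(\eta(x))$ by $\Phi(\eta^{\varepsilon N}(x))$.
\item At the boundary, it replaces $g(\eta(1))$ and $g(\eta(N-1))$ by $\Phi$ of the averages over boxes $\{1,\dots,\varepsilon N\}$ and its mirror image.
\item These averages converge to the boundary values $\Phi(\rho_s(0))$ and $\Phi(\rho_s(1))$ once Sobolev regularity is known.
\end{enumerate}
Sending $N\to\infty$ and then $\varepsilon\to0$ produces \eqref{eq:integral_zre_rob}. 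Uniform integrability to pass limits under the unbounded $\eta$ comes again from the stochastic domination.

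\textbf{Energy estimate.} To verify $\Phi(\rho)\in L^2(0,T;\mathcal H^1)$, I would bound $E\big[\sup_H\{\int_0^T\frac1N\sum_x\partial_uH_s(x/N)\,g(\eta^{\varepsilon N}_s(x))\de s-c\int_0^T\frac1N\sum_xH_s^2g(\eta_s(x))\de s\}\big]$ by a constant. The argument uses the entropy inequality, the Feynman--Kac formula, and a change of variables $\eta\mapsto\eta^{x,x+1}$ in the variational formula, taken over a countable dense set of $H$. This yields the bound $\sup_H\{\llangle\Phi(\rho),\partial_uH\rrangle-c\llangle H,H\rrangle\}<\infty$, which by the Riesz representation gives the weak derivative in $L^2$.

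The main obstacle is the boundary replacement $g(\eta(1))\approx\Phi(\eta^{\varepsilon N}(1))$. The boundary generator carries only a factor $\kappa/N$, so the Dirichlet form gives weak control there. Moreover, the unbounded occupation numbers require truncating at large particle numbers, using the domination by $\bar\nu_N^\kappa$ and the spectral gap bound, which is polynomial for $j\lesssim\ell$ as in Remark~\ref{remark:spectral_gap}. I would first prove a one-block estimate on boxes attached to the boundary site, and then a two-block estimate that combines the bulk Dirichlet form with a telescoping path argument.
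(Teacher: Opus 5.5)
Your proposal is correct and follows essentially the same route as the proof the paper relies on. The paper does not reprove this proposition but imports it from \cite[Theorem 2.12]{agnr25}, where it is obtained by this entropy method: attractiveness and domination by $\bar\nu_N^\kappa$ for moment bounds and tightness, the bulk and boundary replacement lemmas \cite[Lemmas 3.9 and 3.14]{agnr25} built on one- and two-block estimates under \textsc{Assumption (SG)}, an energy estimate giving $\Phi(\rho^\kappa)\in L^2(0,T;\mathcal H^1)$, and uniqueness via \cite[Lemma 2.10]{agnr25}.

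Besides the truncation you mention, one further technical point needs care. In your energy estimate, the change of variables $\eta\mapsto\eta^{x,x+1}$ under the inhomogeneous measure $\bar\nu_N^\kappa$ produces, after Young's inequality, $H$-independent terms in the unbounded variables $g(\eta(x))$. These must be kept outside the supremum and controlled by stochastic domination, as the paper does with $B_N^\kappa$ in its own energy estimate.
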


\begin{lemma}\label{lemma:inf}
For every $u\in[0,1]$, we have
\[
\inf_{\kappa>0}\bar\rho^\kappa(u)
=
\Phi^{-1}\left(
\min\left\{
\frac{\alpha+\beta}{\lambda+\delta},
\frac{\alpha}{\lambda}(1-u)
+\frac{\beta}{\delta}u
\right\}
\right).
\]
\end{lemma}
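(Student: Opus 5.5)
Since $\Phi^{-1}$ is strictly increasing, $\inf_{\kappa>0}\bar\rho^\kappa(u)=\Phi^{-1}\bigl(\inf_{\kappa>0}\bar\varphi^\kappa(u)\bigr)$, provided the infimum of $\bar\varphi^\kappa(u)$ over $\kappa>0$ is either attained or approached along $\kappa\to0$ or $\kappa\to\infty$. Continuity of $\Phi^{-1}$ on its domain, which holds because $\Phi$ is continuous and strictly increasing, handles the non-attained case. The plan therefore reduces to computing $\inf_{\kappa>0}\bar\varphi^\kappa(u)$ for fixed $u\in[0,1]$ from the explicit formula \eqref{eq:limit_fug}.

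Fix $u$ and write $\bar\varphi^\kappa(u)=\frac{a\kappa+b}{c\kappa+d}$, a Möbius function of $\kappa$ with
\[
a=\alpha\delta-(\alpha\delta-\beta\lambda)u=\alpha\delta(1-u)+\beta\lambda u,\quad b=\alpha+\beta,\quad c=\lambda\delta,\quad d=\lambda+\delta,
\]
all of which are positive. Its derivative in $\kappa$ is $\frac{ad-bc}{(c\kappa+d)^2}$, which has constant sign on $(0,\infty)$. Hence $\kappa\mapsto\bar\varphi^\kappa(u)$ is monotone, and its infimum over $\kappa>0$ equals the smaller of the two endpoint limits:
\[
\lim_{\kappa\to0}\bar\varphi^\kappa(u)=\frac{b}{d}=\frac{\alpha+\beta}{\lambda+\delta},\qquad \lim_{\kappa\to\infty}\bar\varphi^\kappa(u)=\frac{a}{c}=\frac{\alpha}{\lambda}(1-u)+\frac{\beta}{\delta}u.
\]
If $ad=bc$, the function is constant and both limits coincide, so the formula still holds.

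Thus $\inf_{\kappa>0}\bar\varphi^\kappa(u)=\min\bigl\{\frac{\alpha+\beta}{\lambda+\delta},\frac{\alpha}{\lambda}(1-u)+\frac{\beta}{\delta}u\bigr\}$. This value lies in $\bigl[\min\{\alpha/\lambda,\beta/\delta\},\max\{\alpha/\lambda,\beta/\delta\}\bigr]\subset[0,\varphi^*)$ by \eqref{eq:reservoir_assumption}, so it is in the domain of $\Phi^{-1}$. Monotonicity and continuity of $\Phi^{-1}$ then let us pass the infimum inside, which gives the claim.

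There is no real obstacle here. The only point requiring care is justifying the interchange of $\inf$ with $\Phi^{-1}$ when the infimum is a limit rather than a minimum. This follows from continuity of $\Phi^{-1}=R$: $R$ is a power series ratio, continuous on $[0,\varphi^*)$.
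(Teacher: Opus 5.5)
Your proof is correct and follows the paper's argument essentially step by step: you write $\bar\varphi^\kappa(u)$ as a linear-fractional function of $\kappa$ whose derivative has $\kappa$-independent sign, take the infimum as the smaller of the limits at $\kappa\to0$ and $\kappa\to\infty$, and pass it through the continuous increasing map $\Phi^{-1}$. You also check that this minimum lies in $[0,\varphi^*)$, so that $\Phi^{-1}$ is defined there; the paper leaves this point implicit.
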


The proof is given at the end of the appendix.

\begin{remark}\label{remark:initial condition}
Following \cite[Remark 2.13]{agnr25}, the stochastic
domination assumption in Theorem~\ref{thm:hydro}
requires the initial profile $\gamma$ to satisfy
\[
\gamma(u)\leq
\inf_{\kappa>0}\bar\rho^\kappa(u),
\]
for almost every $u\in[0,1]$,
where $\bar\rho^\kappa$ is defined in
Remark~\ref{remark:bar_rho_kappa}.

A natural choice of initial measures satisfying both
\eqref{eq:initial_assumptions} and
\eqref{eq:initial_profile} is the sequence
$\{\nu^N_{\gamma(\cdot)}\}_N$ of product measures
with slowly varying parameters associated with
an initial profile $\gamma$ satisfying the above
inequality; see
\cite[Definition 4.2 and Proposition 4.1]{fmn21}
for details.

By Lemma~\ref{lemma:inf}, this condition is
equivalent to \eqref{eq:initial_condition}.
Therefore, the assumption
\eqref{eq:initial_condition} ensures that the
initial profile satisfies the necessary domination
constraint for applying Theorem~\ref{thm:hydro}
under the stochastic domination hypothesis.
\end{remark}

\subsection{Moment Bounds}
In this section, we prove that the family $\{\rho^\kappa\}_\kappa$ of limit densities of $\{Q_\kappa^N\}_N$ is bounded in $L^p([0, T]\times[0, 1])$ for any $p<\infty$. We highlight that, in fact, under the assumption that the initial condition $\gamma$ appearing in Theorem \ref{thm:hydro} is bounded, we are able to prove that $\{\rho^\kappa\}_\kappa$ is bounded in $L^\infty([0, T]\times[0, 1])$; see Proposition \ref{prop:comparison}. However, while the proof of the $L^\infty$ bound is analytical and requires the knowledge of the hydrodynamic equation, that of the $L^p$ bound can be derived purely from the interacting particle system. We will see that, in particular, as long as one has the $L^p$ bound for any $p>2$, this is enough to derive most of the results of the present work; see Remark \ref{remark:necessary_Linfty}.

The result below is a stronger version of \cite[Lemma 3.6]{agnr25}, and yields a uniform bound along $\kappa>0$ for the $\ell$-moments of the occupation variables under the measure $\bar\nu_N^\kappa$.
\begin{lemma}\label{lemma:bounded_moments}
Let $\bar{\nu}_N^\kappa$ denote the invariant measure from Lemma \ref{lemma:inv_measure}: for each positive integer $\ell$, we have that
\begin{equation*}
    \mysup_{\kappa>0}\mysup_{N}\mysup_{x\in I_N}E_{\bar{\nu}_N^\kappa}[\eta(x)^\ell]<\infty.
\end{equation*}
\end{lemma}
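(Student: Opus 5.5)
The proof reduces to two facts: under $\bar\nu_N^\kappa$ each $\eta(x)$ has the single-site law with fugacity $\bar\varphi_N^\kappa(x)$, and these fugacities stay in a fixed compact subset of $[0,\varphi^*)$, uniformly in $\kappa$, $N$ and $x$.

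Since $\bar\nu_N^\kappa$ is the product measure \eqref{eq:prod_measure} with fugacity profile \eqref{eq:fugacity}, for every $x\in I_N$ we have
\begin{equation*}
E_{\bar\nu_N^\kappa}[\eta(x)^\ell]=\frac{1}{Z(\bar\varphi_N^\kappa(x))}\sum_{j=0}^\infty j^\ell\frac{(\bar\varphi_N^\kappa(x))^j}{g(j)!}=:M_\ell(\bar\varphi_N^\kappa(x)).
\end{equation*}
So the whole dependence on $(\kappa,N,x)$ passes through the single number $\bar\varphi_N^\kappa(x)$. By Remark \ref{remark:barphi_bounds}, this number lies in the fixed interval
\[
[a,b]:=\Bigl[\min\{\tfrac{\alpha}{\lambda},\tfrac{\beta}{\delta}\},\ \max\{\tfrac{\alpha}{\lambda},\tfrac{\beta}{\delta}\}\Bigr]
\]
for all $\kappa>0$, $N\ge2$ and $x\in I_N$. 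By \eqref{eq:params_condition}, $b<\varphi^*$. It therefore suffices to show that $\sup_{\varphi\in[0,b]}M_\ell(\varphi)<\infty$.

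For this, use the bound $Z(\varphi)\ge 1$, which holds because the $j=0$ term of $Z$ equals $1$. Since the numerator series has nonnegative coefficients, it is nondecreasing in $\varphi$, and we get
\[
\sup_{\varphi\in[0,b]}M_\ell(\varphi)\le\sum_{j\ge0}j^\ell\frac{b^j}{g(j)!}.
\]
This series converges because $b<\varphi^*$. Indeed, $\sum_j j^\ell c_j\varphi^j$ has the same radius of convergence as $\sum_j c_j\varphi^j$, with $c_j=1/g(j)!$: the factor $j^\ell$ satisfies $(j^\ell)^{1/j}\to1$, so it does not change the root test. Equivalently, pick $b<b'<\varphi^*$; then $j^\ell(b/b')^j$ is bounded in $j$, and so
\[
\sum_j j^\ell\frac{b^j}{g(j)!}\le C\,Z(b')<\infty.
\]
This gives the claimed bound, uniform in $\kappa$, $N$ and $x$.

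No step is really an obstacle. The only point that needs care is uniformity in $\kappa$, and it comes entirely from Remark \ref{remark:barphi_bounds}: the fugacities cannot approach the radius of convergence $\varphi^*$ as $\kappa\to0$ or $\kappa\to\infty$. This is exactly where the assumption $\max\{\alpha/\lambda,\beta/\delta\}<\varphi^*$ is used.
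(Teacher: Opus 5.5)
Your proof is correct and follows the same route as the paper: you write $E_{\bar\nu_N^\kappa}[\eta(x)^\ell]$ as a function of the single fugacity $\bar\varphi_N^\kappa(x)$, use Remark~\ref{remark:barphi_bounds} to confine all fugacities to $[0,\max\{\alpha/\lambda,\beta/\delta\}]$ uniformly in $\kappa$, $N$ and $x$, and use that this interval lies strictly inside the radius of convergence. The only difference is that you prove $\sup_{\varphi\in[0,b]}R_\ell(\varphi)<\infty$ directly, via $Z\ge1$ and comparison with $Z(b')$, whereas the paper defers this step to \cite[Lemma 3.6]{agnr25}.
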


\begin{proof}  
It is not hard to check that, for all $x\in I_N$, we have that $E_{\bar\nu^\kappa_N}[\eta(x)^\ell]=R_\ell(\bar\varphi^\kappa_N(x))$, where $R_\ell:[0, \varphi^*)\to[0, \infty)$ is given by
\begin{equation*}
    R_\ell(\varphi):=\frac{1}{Z(\varphi)}\sum_{j=0}^\infty j^\ell\frac{\varphi^j}{g(j)!},
\end{equation*}
with $\varphi^*$ being the radius of convergence of $Z$. Let $\varphi^{\kappa *}:=\mysup_{N}\mysup_{x\in I_N}\bar\varphi^\kappa_N(x)$, where $\bar\varphi_N^\kappa(x)$ was defined in \eqref{eq:fugacity}: by \eqref{eq:params_condition}, we have that $\varphi^{\kappa *}<\varphi^*$, and moreover, by Remark \ref{remark:barphi_bounds}, we have that
\begin{equation*}
    \mysup_{\kappa>0}\varphi^{\kappa *} = \mymax \left\{\frac{\alpha}{\lambda}, \frac{\beta}{\delta}\right\} =:\varphi^{**}.
\end{equation*}
Therefore, following \cite[Lemma 3.6]{agnr25}, we immediately get
\begin{equation*}
    \mysup_{\kappa>0}\mysup_{N}\mysup_{x\in I_N}E_{\bar{\nu}^\kappa_N}[\eta(x)^\ell]=\mysup_{\kappa>0}\mysup_{N}\mysup_{x\in I_N}R_\ell(\bar\varphi^\kappa_N(x))\le\mysup_{\varphi\in[0, \varphi^{**}]}R_\ell(\varphi)<\infty,
\end{equation*}
as claimed.
\end{proof}

We are now able to prove the $L^p$ bound.

\begin{lemma} \label{lemma:rho_Lp} Under the hypotheses of Theorem \ref{thm:hydro}, the limit point $Q_\kappa^*$ of $\{Q_\kappa^N\}_N$ is concentrated on a path $\pi_t^\kappa(\de u)=\rho_t^\kappa(u)\de u$ such that
\begin{equation*}
    \mysup_{\kappa>0} \int_0^T\int_0^1 \rho_t^\kappa(u)^\ell \de u\de t<\infty
\end{equation*}
for any positive integer $\ell$.
\end{lemma}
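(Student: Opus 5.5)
The plan is to transfer the uniform moment bound of Lemma \ref{lemma:bounded_moments} from the invariant measure to the evolving measure by attractiveness, and then pass it to the limit $N\to\infty$ through a truncation and lower semicontinuity argument.

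\textbf{Step 1: microscopic moment bound.} By \eqref{eq:initial_assumptions}, $\mu^N\le\bar\nu_N^\kappa$. By attractiveness (\cite[Lemma 2.6]{agnr25}) and the invariance of $\bar\nu_N^\kappa$ from Lemma \ref{lemma:inv_measure}, we get $\mu^N S(t)\le\bar\nu_N^\kappa S(t)=\bar\nu_N^\kappa$ for all $t\ge0$. Since $\eta\mapsto\eta(x)^\ell$ is monotone, Lemma \ref{lemma:bounded_moments} yields
\[
\mathbb E^\kappa_{\mu^N}[\eta_t(x)^\ell]\le E_{\bar\nu_N^\kappa}[\eta(x)^\ell]\le C_\ell
\]
uniformly in $\kappa$, $N$, $x\in I_N$ and $t\in[0,T]$.

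\textbf{Step 2: averaging over a box.} Fix $\eps>0$ and set $\eta^{\eps N}(x)=\frac{1}{2\eps N+1}\sum_{|y-x|\le\eps N}\eta(y)$, with the box cut off near the boundary. By Jensen's inequality and Step 1,
\[
\mathbb E^\kappa_{\mu^N}\Big[\int_0^T\frac1N\sum_{x}\big(\eta^{\eps N}_t(x)\big)^\ell\de t\Big]\le TC_\ell .
\]
The quantity $\frac1N\sum_x(\eta^{\eps N}(x))^\ell$ is, up to boundary errors vanishing as $N\to\infty$, equal to $\int_0^1\big(\pi^{N,\kappa}*\iota_\eps\big)(u)^\ell\de u$, where $\iota_\eps=(2\eps)^{-1}\mathbf 1_{[-\eps,\eps]}$.

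\textbf{Step 3: passage to the limit.} Truncate by replacing $(\cdot)^\ell$ with $\min\{(\cdot)^\ell,A\}$ and replace $\iota_\eps$ by a continuous approximation, so that the map
\[
\pi\mapsto\int_0^T\int_0^1\min\{(\pi_t*\iota_\eps)^\ell,A\}\de u\de t
\]
is bounded and continuous on $\mathcal D([0,T],\mathcal M_+)$. This is the main technical point: $\pi_t*\iota_\eps(u)=\langle\pi_t,\iota_\eps(\cdot-u)\rangle$ is continuous in $\pi$, and paths that are not continuous in time are negligible since the limit $Q^*_\kappa$ is concentrated on continuous paths. By Proposition \ref{corol:hydro}, $Q^N_\kappa\Rightarrow\delta_{\rho^\kappa}$. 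Using weak convergence, the bound $TC_\ell$ passes to $\int_0^T\int_0^1\min\{(\rho^\kappa_t*\iota_\eps)^\ell,A\}\le TC_\ell$.

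\textbf{Step 4: removing $A$ and $\eps$.} Let $A\to\infty$ by monotone convergence. Then let $\eps\to0$: since $\rho^\kappa_t*\iota_\eps\to\rho^\kappa_t$ almost everywhere by Lebesgue differentiation, Fatou's lemma gives $\int_0^T\int_0^1(\rho^\kappa)^\ell\le TC_\ell$. The constant $C_\ell$ does not depend on $\kappa$, so taking the supremum over $\kappa>0$ yields the claim.
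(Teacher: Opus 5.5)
Your proposal is correct and follows essentially the same route as the paper: stochastic domination and attractiveness transfer the $\kappa$-uniform moment bound of Lemma \ref{lemma:bounded_moments} to block averages along the dynamics, and the bound is then passed to $\rho^\kappa$ through the hydrodynamic limit and Lebesgue's differentiation theorem. The differences are minor. The paper applies attractiveness directly to the monotone functional $F_\ell(\eps N,\eta)$, built from bulk and one-sided boundary block averages, rather than site by site followed by Jensen. Your truncation, continuity and Fatou steps spell out the limiting argument that the paper only sketches.
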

\begin{proof}
Let $I_N^\eps:= \{1+\eps N, \ldots, N-1-\eps N \}$, and define 
\begin{align}
    &\bar\eta^{\eps N}(x):=\frac{1}{2\eps N+1}\sum_{|y-x|\le \eps N}\eta(y), \qquad &x\in I_N^\eps, \label{eq:bar_eta}
    \\&\vec\eta^{\eps N}(x):=\frac{1}{\eps N}\sum_{y=x+1}^{x+\eps N}\eta(y), \qquad &x\in\{1, \ldots, \eps N\}, \nonumber
    \\ &\cev\eta^{\eps N}(x):=\frac{1}{\eps N}\sum_{y=x-\eps N}^{x-1}\eta(y), \qquad &x\in\{N-\eps N, \ldots, N-1\}, \nonumber
\end{align}
where $\eps N$ is to be understood as $\lfloor \eps N\rfloor$. Note that the map 
\begin{equation*}
    \eta\mapsto \sum_{x=1}^{\eps N} \big(\vec\eta^{\eps N}(x)\big)^\ell+\sum_{x\in I_N^\eps}\big(\bar\eta^{\eps N}(x)\big)^\ell+\sum_{x=N-\eps N}^{N-1}\big(\cev\eta^{\eps N}(x)\big)^\ell=:F_\ell(\eps N, \eta)
\end{equation*}
is nondecreasing: hence, by \textsc{Assumption (ND)}, the stochastic domination hypothesis in \eqref{eq:initial_assumptions} and attractiveness, we have that
\begin{align*}
    \expected_{\mu^N}\left[\int_0^T \frac{1}{N} F_\ell(\eps N, \eta_s) \de s \right]&\le \expected_{\bar\nu_N^\kappa}\left[\int_0^T \frac{1}{N} F_\ell(\eps N, \eta_s) \de s \right]
    = T E_{\bar\nu_N^\kappa}\left[\frac{1}{N} F_\ell(\eps N, \eta)\right] 
    \\&\lesssim T\mysup_{x\in I_N} E_{\bar\nu_N^\kappa}\left[\eta(x)^\ell\right].
\end{align*}
By Lemma \ref{lemma:bounded_moments}, this implies that
\begin{equation*}
    \mysup_{\kappa>0}\mylimsup_{\eps\to 0}\mylimsup_{N\to\infty}\expected_{\mu^N}\left[\int_0^T \frac{1}{N} F_\ell (\eps N, \eta_s) \de s \right]<\infty.
\end{equation*}
But now, define the function $\vec i_\eps$ on $[0, 1]$ as a continuous approximation of the map $\frac{1}{\eps}\boldsymbol{1}_{(0, \eps]}$ in such a way that $\mylim_{\eps\to0}\|\frac{1}{\eps}\boldsymbol{1}_{(0, \eps]}-\vec i_\eps\|_1=0$. Then, we can write $\vec\eta_s^{\eps N}(x)=\langle\pi_s^N, \vec i_\eps(\cdot-\frac{x}{N})\rangle$ up to an error which vanishes in $L^1$ as $\eps\to0$. By writing $\bar\eta_s^{\eps N}(x)$ and $\cev\eta_s^{\eps N}(x)$ in a similar way and applying Lebesgue's differentiation theorem (which is allowed because, by the argument given in the proof of \cite[Lemma 3.5]{agnr25}, $\rho^\kappa$ is integrable), we deduce that
\begin{equation*}
    \mysup_{\kappa>0}\expected_{Q_\kappa^*}\left[\int_0^T\int_0^1 \rho_t^\kappa(u)^\ell\de u\de t\right]<\infty.
\end{equation*}
{{Proposition}} \ref{corol:hydro} then allows us to remove the expectation.
\end{proof}

\subsection{Proof of Theorem \ref{thm:energy_estimate}}
The proof of Theorem \ref{thm:energy_estimate} is a simple consequence of the proposition below: indeed, to remove the expectation, it suffices to apply {{Proposition}} \ref{corol:hydro}. The outline of our proof is based on that of \cite[Proposition 5.4]{dgn21}; however, the argument requires substantial modifications, which come from the different definition of the energy functional and from the fact that the occupation variables of our underlying particle systems are unbounded.

\begin{proposition} There exist constants $M_0$ and $c$ such that, for any $\kappa>0$, 
\begin{equation*}
    \expected_{Q_\kappa^*}\left[\energy_{\kappa, c}(\rho^\kappa)\right]\le M_0<\infty,
\end{equation*}
where $\rho^\kappa$ is the unique weak solution to \eqref{eq:zre_robin} and $Q_\kappa^*$ is the unique limit point of $\{Q_\kappa^N\}_N$.
\end{proposition}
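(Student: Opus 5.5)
The plan is the standard entropy/Feynman–Kac route of \cite[Proposition 5.4]{dgn21}, adapted to unbounded occupation variables. First I reduce the supremum in \eqref{eq:def_energy} to a countable family. Choose a countable set $\{H^j\}_{j\ge1}$ dense in $C^{0,1}([0,T]\times[0,1])$ for the norm $\|H\|_\infty+\|\partial_uH\|_\infty$. For fixed $\rho^\kappa$, the map $H\mapsto \mathscr T_{\kappa,\rho^\kappa}(H)-c\llangle H,H\rrangle_{\kappa,\rho^\kappa}$ is continuous in this norm, because $\Phi(\rho^\kappa)\in L^2$ and $\Phi\le g^*\cdot$ is integrable (Lemma \ref{lemma:rho_Lp}). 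The boundary terms are harmless since $\Phi<\varphi^*$ is bounded. By monotone convergence it therefore suffices to show that
\[
\expected_{Q^*_\kappa}\Big[\max_{1\le j\le k}\big\{\mathscr T_{\kappa,\rho^\kappa}(H^j)-c\llangle H^j,H^j\rrangle_{\kappa,\rho^\kappa}\big\}\Big]\le M_0
\]
uniformly in $k$ and $\kappa$.

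Next I write a microscopic counterpart of each term. The term $\llangle \Phi(\rho)-\bar\varphi^\kappa,\partial_uH\rrangle$ is, after a summation by parts, approximated by
\[
\int_0^T\sum_x H_s(\tfrac xN)\big[(g(\eta_s(x+1))-\bar\varphi^\kappa_N(x+1))-(g(\eta_s(x))-\bar\varphi^\kappa_N(x))\big]\de s
\]
together with boundary terms $H_s(\tfrac1N)(g(\eta_s(1))-\bar\varphi_N^\kappa(1))$ and the analogous term at $N-1$. The quadratic term $\llangle H,H\rrangle_{\kappa,\rho}$ is approximated by $\frac1N\int_0^T\sum_x H_s^2(\tfrac xN)\big(g(\eta_s(x))+\bar\varphi_N^\kappa(x)\big)\de s$, plus the boundary pieces $\frac{1}{\kappa\lambda}H_s^2(0)\big(g(\eta_s(1))+\bar\varphi^\kappa_N(1)\big)$ and the analogue at $1$. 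The passage to the macroscopic quantities uses the replacement lemmas of \cite{agnr25}, that is, the one- and two-block estimates under \textsc{Assumption (SG)} and replacement at the boundaries. Because $g$ and $\Phi$ are only Lipschitz and $\eta$ is unbounded, I need uniform integrability. This comes from attractiveness and domination by $\bar\nu^\kappa_N$ together with Lemma \ref{lemma:bounded_moments}, uniformly in $\kappa$, exactly as in Lemma \ref{lemma:rho_Lp}.

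Then I apply the entropy inequality with respect to $\bar\nu^\kappa_N$ and use $H(\mu^N|\bar\nu^\kappa_N)\lesssim N$ uniformly in $\kappa$, via the identity $e^{\max_j a_j}\le\sum_j e^{a_j}$. This is followed by Feynman–Kac, which reduces each $j$ to bounding
\[
\sup_f\Big\{\langle V_{H},f\rangle_{\bar\nu^\kappa_N}-N\langle\sqrt f,-\mathcal L^{N,\kappa}\sqrt f\rangle_{\bar\nu^\kappa_N}\Big\},
\]
where $V_H$ is $N$ times the microscopic functional. The key computation is the change of variables $\eta\mapsto\eta^{x,x+1}$ under the product measure. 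It converts $\langle g(\eta(x+1))-g(\eta(x)),f\rangle$ into $\int g(\eta(x))\bar\varphi(x+1)/\bar\varphi(x)\,[f(\eta^{x,x+1})-f(\eta)]$, up to the gradient of $\bar\varphi_N^\kappa$, which is what the subtracted $\bar\varphi$ terms cancel. Cauchy–Schwarz with $(a-b)=(\sqrt a-\sqrt b)(\sqrt a+\sqrt b)$ then bounds this by the bulk Dirichlet form plus $\frac{C}{N}\sum_x H^2\langle g(\eta(x))+g(\eta(x+1)),f\rangle$. The boundary terms are handled identically with the reservoir part $\kappa\mathcal L^N_b$. There the rates $\kappa\alpha/N$ and $\kappa\lambda g/N$ produce precisely the weights $\frac{1}{\kappa\lambda}(g(\eta(1))+\bar\varphi)$, which is why the energy carries the $1/(\kappa\lambda)$ and $1/(\kappa\delta)$ atoms. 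Choosing $c$ large enough absorbs everything, leaving an error $O(1)$ independent of $\kappa$ and an $O(N)$ entropy term divided by $N$. This gives $M_0$.

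The main obstacle is the replacement step for unbounded $\eta$ uniformly in $\kappa$, in particular at the boundary as $\kappa\to\infty$, where the weight $1/\kappa$ and the replacement of $g(\eta(1))$ by $\Phi(\rho(0))$ must be controlled without losing uniformity. I would first try truncating $g(\eta)\wedge A$, using the moment bounds of Lemma \ref{lemma:bounded_moments} to control the truncation error, and exploiting that the boundary terms are multiplied by $1/\kappa$ but are produced by dynamics with rate $\kappa$.
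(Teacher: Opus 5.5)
Your overall architecture matches the paper's: countable reduction, semicontinuity and replacement at fixed $\kappa$, entropy inequality plus Feynman--Kac, and change of variables in the bulk and at the boundary with Young parameter $\kappa\lambda$. The gap is in the bulk step you call the key computation: the claimed cancellation does not happen. For a density $f$, the change of variables gives
\[
\int\{g(\eta(x+1))-g(\eta(x))\}f\,\de\bar\nu_N^\kappa
=\int g(\eta(x))\frac{\bar\varphi_N^\kappa(x+1)}{\bar\varphi_N^\kappa(x)}\big[f(\eta^{x,x+1})-f(\eta)\big]\de\bar\nu_N^\kappa
+\frac{\bar\varphi_N^\kappa(x+1)-\bar\varphi_N^\kappa(x)}{\bar\varphi_N^\kappa(x)}\int g(\eta(x))f\,\de\bar\nu_N^\kappa .
\]
So after subtracting $\bar\varphi_N^\kappa(x+1)-\bar\varphi_N^\kappa(x)$ you are left with
\[
\sum_x H_s\big(\tfrac xN\big)\frac{\bar\varphi_N^\kappa(x+1)-\bar\varphi_N^\kappa(x)}{\bar\varphi_N^\kappa(x)}\int\big\{g(\eta(x))-\bar\varphi_N^\kappa(x)\big\}f\,\de\bar\nu_N^\kappa .
\]
This vanishes only when $E_f[g(\eta(x))]=\bar\varphi_N^\kappa(x)$. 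In other words, the subtracted fugacity terms cancel the gradient of $\bar\varphi_N^\kappa$ only in $\bar\nu_N^\kappa$-mean, not for arbitrary $f$.

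Since the increments of $\bar\varphi_N^\kappa$ are $O(1/N)$ uniformly in $\kappa$, this leftover is $\frac1N\sum_x a_xH_s(\frac xN)E_f[g(\eta(x))-\bar\varphi_N^\kappa(x)]$ with $a_x=O(1)$. It is linear both in $H$ and in $E_f[g(\eta(x))]$, and it is not a Dirichlet-form term. It also cannot be absorbed by $-\frac cN\sum_xH^2E_f[g+\bar\varphi_N^\kappa]$ for any $c$. Wherever $H$ lies strictly between $0$ and $a_x/c$, the net coefficient of $E_f[g(\eta(x))]$ is positive. That expectation is unbounded over densities when $g$ is unbounded, e.g.\ $g(k)=k^\gamma$. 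Controlling it inside the variational problem would need a separate Poincar\'e-type bound of $\frac1N\sum_xE_f[g(\eta(x))]$ by the bulk and boundary Dirichlet forms, uniformly in $\kappa$, which you do not provide.

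The paper avoids this by splitting before the entropy inequality:
\[
g(\eta(x))-g(\eta(x+1))=\Big\{g(\eta(x))-\tfrac{\bar\varphi_N^\kappa(x)}{\bar\varphi_N^\kappa(x+1)}g(\eta(x+1))\Big\}+\Big\{\tfrac{\bar\varphi_N^\kappa(x)}{\bar\varphi_N^\kappa(x+1)}-1\Big\}g(\eta(x+1)).
\]
\begin{itemize}
\item The first bracket has exactly zero remainder after the change of variables.
\item The second bracket is treated along the true process. Young's inequality against the deterministic weight $\frac1N\sum_x\bar\varphi_N^\kappa(x)H^2$ leaves $B_N^\kappa=\frac N4\sum_x\big(\frac{\bar\varphi_N^\kappa(x)}{\bar\varphi_N^\kappa(x+1)}-1\big)^2g(\eta_s(x+1))^2/\bar\varphi_N^\kappa(x)$. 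This is why the paper keeps a factor $c-1$ in front of that weight.
\item $B_N^\kappa$ does not depend on $H$, so it is harmless under the maximum over test functions.
\item Its expectation is bounded uniformly in $N$ and $\kappa$ by attractiveness, $\mu^N\le\bar\nu_N^\kappa$, monotonicity of $g^2$, and $E_{\bar\nu_N^\kappa}[g(\eta(x))^2]\le\bar\varphi_N^\kappa(x)(g^*+\bar\varphi_N^\kappa(x))$.
\end{itemize}
This is where domination and moment bounds are genuinely needed.

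By contrast, the obstacle you single out is not one. The replacement lemmas are applied at fixed $\kappa$, and their errors vanish as $N\to\infty$ and then $\eps\to0$. So no uniformity in $\kappa$, and no truncation, is needed there.

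Two smaller points:
\begin{itemize}
\item Represent the bulk quadratic term by the symmetric average $\frac12\{g(\eta(x))+g(\eta(x+1))\}$ at the same weight $H(\frac xN)^2$, so it matches term by term what the bulk Young step produces. Otherwise re-indexing leaves an $f$-dependent error of the same kind.
\item $\Phi$ is unbounded when $\varphi^*=\infty$. The integrability of the boundary values $\Phi(\rho_s(0))$, $\Phi(\rho_s(1))$ must therefore come from the comparison principle or from Sobolev traces.
\end{itemize}
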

\begin{proof} Following the same argument given before \cite[Equation (5.16)]{dgn21}, we see that it suffices to show that
\begin{equation*}
    \expected_{Q_\kappa^*}\left[\mymax_{0\le l\le q} \left\{\mathscr{T}_{\kappa, \rho^\kappa}(H^l)-c\llangle H^l, H^l\rrangle_{\kappa, \rho^\kappa} \right\}\right]\le M_0
\end{equation*}
for any $q\in\mathbb{N}$ and for some constant $M_0<\infty$ independent of $q$ and $\kappa$. Note that we can write
\begin{equation}\label{eq:T-c}
    \begin{split}
    \mathscr{T}_{\kappa, \rho^\kappa}(H^l)-c\llangle H^l, H^l\rrangle_{\kappa, \rho^\kappa}=\int_0^T
    \Bigg\{&\int_0^1\{\Phi(\rho_s^\kappa(u))-\bar\varphi^\kappa(u)\}\partial_u H^l_s(u)\de u 
    \\&-c\int_0^1 \{\Phi(\rho_s^\kappa(u))+\bar\varphi^\kappa(u)\}H_s^l(u)^2\de u
    \\&-\frac{c}{\kappa\lambda} \left\{\Phi(\rho^\kappa_s(0))+\bar\varphi^\kappa(0)\right\} H_s^l(0)^2
    \\&-\frac{c}{\kappa\delta} \{\Phi(\rho^\kappa_s(1)) + \bar\varphi^\kappa(1)\}H_s^l(1)^2\Bigg\}\de s. 
    \end{split}
\end{equation}

\textbf{Reduction to the Microscopic Problem.} Given $\eps>0$, for fixed $u\in[0, 1]$, we define three approximations of the identity on $[0, 1]$, namely $\vec\iota_\eps^u:=\frac{1}{\eps}\boldsymbol{1}_{[u, u+\eps)}, \bar\iota_\eps^u:=\frac{1}{2\eps}\boldsymbol{1}_{(u-\eps, u+\eps)}$ and $\cev\iota_\eps^u:=\frac{1}{\eps}\boldsymbol{1}_{(u-\eps, u]}$. We will use the notation
\begin{align*}
    &\langle \rho_s^\kappa, \vec\iota_\eps^u\rangle := \frac{1}{\eps}\int_u^{u+\eps} \rho_s^\kappa(v)\de v, \quad \langle \rho_s^\kappa, \bar\iota_\eps^u\rangle := \frac{1}{2\eps}\int_{u-\eps}^{u+\eps} \rho_s^\kappa(v)\de v  \\&\text{and} \quad \langle \rho_s^\kappa, \cev\iota_\eps^u\rangle := \frac{1}{\eps}\int_{u-\eps}^u \rho_s^\kappa(v)\de v.
\end{align*}
Now, the argument given in the proof of \cite[Lemma 3.5]{agnr25} implies, in particular, that $\rho^\kappa\in L^1([0, T]\times[0, 1])$: hence, we can apply Lebesgue's differentiation theorem, which yields
\begin{equation*}
    \mylim_{\eps\to0}\left|\int_0^T\int_0^1 \Phi(\rho_s^\kappa(u))G_s(u)\de u\de s -\int_0^T \int_\eps^{1-\eps}\Phi\big(\langle \rho_s^\kappa, \bar\iota_\eps^u \rangle\big) G_s(u) \de u\de s\right|=0,
\end{equation*}

The boundary limits follow from the Sobolev regularity
in the definition of a weak solution. Indeed,
$\Phi(\rho^\kappa)\in L^2(0,T;H^1(0,1))$, so that,
for almost every $s\in(0,T)$, the function
$\Phi(\rho_s^\kappa)$ admits a continuous representative
on $[0,1]$, whose endpoint values are its Sobolev traces.

Moreover, by Proposition \ref{prop:comparison}, there exists $c_0<\infty$
such that $0\le\rho^\kappa\le c_0$ almost everywhere.
Since $\Phi^{-1}$ is continuous on
$[\Phi(0),\Phi(c_0)]$, composing the continuous
representative of $\Phi(\rho_s^\kappa)$ with
$\Phi^{-1}$ gives a continuous representative of
$\rho_s^\kappa$. Consequently, for almost every
$s\in(0,T)$,
\[
\frac1\varepsilon\int_0^\varepsilon
\rho_s^\kappa(u)\,du
\longrightarrow \rho_s^\kappa(0),
\qquad
\frac1\varepsilon\int_{1-\varepsilon}^1
\rho_s^\kappa(u)\,du
\longrightarrow \rho_s^\kappa(1),
\]
as $\varepsilon\downarrow0$, where the boundary values
are understood through these representatives.

By the continuity of $\Phi$, the same convergence holds
after applying $\Phi$. Since all the averages above
belong to $[0,c_0]$, the dominated convergence theorem
yields
\[
\lim_{\varepsilon\downarrow0}
\int_0^T
\left|
\Phi\left(\frac1\varepsilon\int_0^\varepsilon
\rho_s^\kappa(u)\,du\right)
-\Phi(\rho_s^\kappa(0))
\right|\,ds=0
\]
and
\[
\lim_{\varepsilon\downarrow0}
\int_0^T
\left|
\Phi\left(\frac1\varepsilon\int_{1-\varepsilon}^1
\rho_s^\kappa(u)\,du\right)
-\Phi(\rho_s^\kappa(1))
\right|\,ds=0.
\]
Multiplying the respective differences by $G_s(0)$
and $G_s(1)$, with
$G\in C([0,T]\times[0,1])$, preserves these limits
because $G$ is bounded.

Thus, we only need to show that
\begin{align*}
    \mylimsup_{\eps\to 0} \expected_{Q_\kappa^*}\Bigg[\mymax_{0\le l\le q} \Bigg\{& \int_0^T \Bigg(\int_\eps^{1-\eps}\{\Phi\big(\langle \rho_s^\kappa, \bar\iota_\eps^u \rangle\big) -\bar\varphi^\kappa(u)\}\partial_u H_s^l(u) \de u 
    \\&-c\int_\eps^{1-\eps} \{\Phi\big(\langle \rho_s^\kappa, \bar\iota_\eps^u \rangle\big)+\bar\varphi^\kappa(u)\}H_s^l(u)^2\de u
    \\&-\frac{c}{\kappa\lambda} \{\Phi\big(\langle \rho_s^\kappa, \vec\iota_\eps^0 \rangle\big) + \bar\varphi^\kappa(0)\}H_s^l(0)^2
    \\&-\frac{c}{\kappa\delta} \{\Phi\big(\langle \rho_s^\kappa, \cev\iota_\eps^1 \rangle\big) +\bar\varphi^\kappa(1)\}H_s^l(1)^2\Bigg)\de s\Bigg\}\Bigg]\le M_0.
\end{align*}
But now, by a standard semi-continuity argument (see, for example, \cite[Equation (5.20) and below]{dgn21}), since $Q_\kappa^*$ is the limit point of $\{Q_\kappa^N\}_N$ and using Portmanteau's theorem, we can bound the expectation in the previous display from above by
\begin{align*}
    \myliminf_{N\to \infty} \expected_{Q_\kappa^N}\Bigg[\mymax_{0\le l\le q} \Bigg\{& \int_0^T \Bigg(\int_\eps^{1-\eps}\{\Phi\big(\langle \pi^N_s, \bar\iota_\eps^u \rangle\big) -\bar\varphi^\kappa(u)\}\partial_u H_s^l(u) \de u 
    \\&-c\int_\eps^{1-\eps} \{\Phi\big(\langle  \pi^N_s, \bar\iota_\eps^u \rangle\big)+\bar\varphi^\kappa(u)\}H_s^l(u)^2\de u
    \\&-\frac{c}{\kappa\lambda} \{\Phi\big(\langle  \pi^N_s, \vec\iota_\eps^0 \rangle\big) + \bar\varphi^\kappa(0)\}H_s^l(0)^2
    \\&-\frac{c}{\kappa\delta} \{\Phi\big(\langle \pi^N_s, \cev\iota_\eps^1 \rangle\big) +\bar\varphi^\kappa(1)\}H_s^l(1)^2\Bigg)\de s\Bigg\}\Bigg]
\end{align*}
Recall the definitions of $I_N^\eps$ and of the block averages
$\bar\eta^{\eps N}$, $\vec\eta^{\eps N}$, and $\cev\eta^{\eps N}$
in \eqref{eq:bar_eta} and the subsequent discussion.
We approximate the spatial integrals by Riemann sums and the
empirical-measure averages by the corresponding discrete block
averages. The errors in these approximations are understood in
the iterated sense
\[
    \limsup_{\eps\to0}\limsup_{N\to\infty}
    \expected_{Q_\kappa^N}\bigl[|R_{N,\eps}|\bigr]=0,
\]
where $R_{N,\eps}$ denotes the total approximation error, for
each fixed $\kappa>0$. Since only finitely many test functions
are involved, this control also allows us to pass between the
corresponding maxima.

Moreover, Remark~\ref{remark:limit_fug} allows us to replace
$\bar\varphi^\kappa(x/N)$ by $\bar\varphi_N^\kappa(x)$ in the
bulk terms. At the boundary, the same estimate, together with
the explicit affine form of $\bar\varphi^\kappa$, gives
\[
    \bigl|\bar\varphi^\kappa(0)-\bar\varphi_N^\kappa(1)\bigr|
    +
    \bigl|\bar\varphi^\kappa(1)
          -\bar\varphi_N^\kappa(N-1)\bigr|
    \le \frac{C}{N},
\]
with $C$ independent of $\kappa$ and $N$.
The resulting errors in the boundary terms therefore vanish
as $N\to\infty$ for each fixed $\kappa>0$.
It thus remains to bound
\begin{equation}\label{eq:pre_repl_limit}
    \begin{split}\mylimsup_{\eps\to0}\myliminf_{N\to\infty} \expected_{Q_\kappa^N} \Bigg[\mymax_{0\le l\le q}\Bigg\{&\int_0^T\Bigg(\frac{1}{N}\sum_{x\in I_N^\eps} \left\{\Phi\big(\bar\eta^{\eps N}_s(x)\big) - \bar\varphi^\kappa_N(x)\right\} \partial_u H^l_s\left(\frac{x}{N}\right) 
    \\&-\frac{c}{N} \sum_{x\in I_N^\eps}\left\{\Phi\big(\bar\eta^{\eps N}_s(x)\big) + \bar\varphi_N^\kappa(x)\right\}H_s^l\left(\frac{x}{N}\right)^2
    \\&-\frac{c}{\kappa\lambda} \left\{\Phi\big(\vec\eta^{\eps N}_s(1) \big)+ \bar\varphi_N^\kappa(1)\right\} H_s^l(0)^2
    \\&-\frac{c}{\kappa\delta} \left\{\Phi\big(\cev\eta^{\eps N}_s(N-1)\big)+ \bar\varphi_N^\kappa(N-1)\right\}H_s^l(1)^2\Bigg)\de s\Bigg\}\Bigg].
\end{split}
\end{equation}

\medskip
\textbf{Replacement Lemmas.} Our goal is now to ``replace $\Phi$ by $g$", which we will do via the replacement lemmas of \cite{agnr25}. By the bulk replacement lemma \cite[Lemma 3.9]{agnr25}, we have that, for any $G\in C([0, T]\times[0, 1])$,
\begin{equation*}
    \expected_{Q_\kappa^N} \left[\left|\int_0^T\frac{1}{N}\sum_{x\in I_N^\eps} \Big\{\Phi\big(\bar\eta^{\eps N}_s(x)\big) - \frac{1}{2\eps N+1}\sum_{|y-x|\le \eps N}g(\eta_s(y))\Big\} G_s\left(\frac{x}{N}\right) \right|\right]
\end{equation*}
vanishes as $N\to\infty$ and $\eps\to0$. Moreover, by the argument given below \cite[Equation (3.23)]{agnr25}, we also have that
\begin{equation*}
    \expected_{Q_\kappa^N} \left[\left|\int_0^T\frac{1}{N}\sum_{x\in I_N^\eps} \Big\{ \frac{1}{2\eps N+1}\sum_{|y-x|\le \eps N}g(\eta_s(y))-g(\eta_s(x))\Big\} G_s\left(\frac{x}{N}\right) \right|\right]
\end{equation*}
vanishes as $N\to\infty$ and $\eps\to0$. But then, since $\mymax_{0\le l\le q} |a(H^l)| \le \sum_{l=0}^q |a(H^l)|$, these two limits are enough to see that we can replace $\Phi(\bar\eta_s^{\eps N}(x))$ by $g(\eta_s(x))$ (or, by the same argument, by $g(\eta_s(x+1))$) in the first and second lines of \eqref{eq:pre_repl_limit}. Similarly, by the boundary replacement lemma \cite[Lemma 3.14]{agnr25} and by arguing in a similar way as above, we can also substitute $\Phi(\vec\eta^{\eps N}_s(1))$ and $\Phi(\cev\eta^{\eps N}_s(N-1))$ in the third and fourth line of \eqref{eq:pre_repl_limit}, respectively, with $g(\eta_s(1))$ and $g(\eta_s(N-1))$, respectively. Hence, we are only left to estimate
\begin{equation}\label{eq:limit_repl}
    \begin{split}
    \mylimsup_{\eps\to0}\myliminf_{N\to\infty} \expected_{Q_\kappa^N} \Bigg[\mymax_{0\le l\le q}\Bigg\{&\int_0^T\Bigg(\frac{1}{N}\sum_{x\in I_N^\eps} \left\{g(\eta_s(x))-\bar\varphi_N^\kappa(x)\right\} \partial_u H^l_s\left(\frac{x}{N}\right) 
    \\&-\frac{c}{N}\sum_{x\in I_N^\eps} \left\{\frac{g(\eta_s(x))+g(\eta_s(x+1))}{2}+\bar\varphi_N^\kappa(x)\right\}  H^l_s\left(\frac{x}{N}\right)^2
    \\&-\frac{c}{\kappa\lambda} \left\{g(\eta_s(1))+\bar\varphi_N^\kappa(1)\right\}H_s^l(0)^2
    \\&-\frac{c}{\kappa\delta} \left\{g(\eta_s(N-1)) +\bar\varphi_N^\kappa(N-1)\right\}H_s^l(1)^2\Bigg)\de s\Bigg\}\Bigg].
    \end{split}
\end{equation}
The choice of replacing $\Phi(\bar\eta_s^{\eps N}(x))$ by the average $\frac{g(\eta_s(x))+g(\eta_s(x+1))}{2}$ in the second line will become clear later in the proof.

The idea now is to extend the sum above from $I_N$ to $I_N^\varepsilon$. For that we start by observing  that, by \eqref{eq:function_g}, we have that $g(\eta(x))\le g^* \eta(x)$. But now, the map $\eta\mapsto \eta(x)$ is nondecreasing: hence, by \textsc{Assumption (ND)}, the stochastic domination hypothesis in \eqref{eq:initial_assumptions} and attractiveness, we have that {{the sum in the left boundary,}}
\begin{align}
    &\expected_{Q_\kappa^N} \left[{{\mymax_{0\le l\le q}}}\left|\int_0^T\frac{1}{N}\sum_{x=1}^{\eps N} \left\{g(\eta_s(x))-\bar\varphi_N^\kappa(x)\right\} \partial_u H^l_s\left(\frac{x}{N}\right)\de s\right|\right]\label{eq:eps_bound}
    \\&\le{{\sum_{l=0}^q}}  \Bigg\{g^*\expected_{Q_\kappa^N} \left[\int_0^T\frac{1}{N}\sum_{x=1}^{\eps N} \eta_s(x) \left|\partial_u H^l_s\left(\frac{x}{N}\right)\right|\de s\right] \\&\qquad\qquad\qquad\qquad+ \int_0^T\frac{1}{N}\sum_{x=1}^{\eps N} \bar\varphi_N^\kappa(x)\left|\partial_u H^l_s\left(\frac{x}{N}\right)\right|\de s\Bigg\}\nonumber
    \\& \le \eps g^* T{{\sum_{l=0}^q}} \|\partial_u H^l\|_\infty \mysup_N\mysup_{x\in I_N}E_{\bar\nu_N^\kappa}[\eta(x)]+\eps T{{\sum_{l=0}^q}}  \|\partial_u H^l\|_\infty \mysup_{N}\mysup_{x\in I_N} \bar\varphi_N^\kappa(x),\nonumber
\end{align}
 vanishes as $N\to\infty$ and $\eps\to0$. 
 By arguing in a similar way for the sum at the right boundary and using again $\mymax_{0\le l\le q} a(H^l) \le \sum_{l=0}^q a(H^l)$, we have thus proved that we only need to estimate \eqref{eq:limit_repl} without the limit in $\eps$ and with the sums in the first two lines running on $I_N$ instead of $I_N^\eps$, {{since the argument for the second line is similar}}.
Now, by performing an integration by parts, we can write
\begin{align}
    \frac{1}{N}&\sum_{x=1}^{N-1}\left\{g(\eta_s(x))-\bar\varphi_N^\kappa(x)\right\} \partial_u H^l_s\left(\frac{x}{N}\right) \nonumber
    \\&= \sum_{x=1}^{N-2} \big\{g(\eta_s(x))-g(\eta_s(x+1))\big\}  H^l_s\left(\frac{x}{N}\right) \label{eq:change_ratio}
    \\&\phantom{=}+ \sum_{x=1}^{N-2} \big\{\bar\varphi_N^\kappa(x+1)-\bar\varphi_N^\kappa(x)\big\}  H^l_s\left(\frac{x}{N}\right)\nonumber 
    \\& \phantom{=}+  (\bar\varphi_N^\kappa(1)-g(\eta_s(1))) H^l_s(0)+(g(\eta_s(N-1))-\bar\varphi_N^\kappa(N-1)) H^l_s(1)\nonumber
\end{align}
up to a correction of order $\frac{1}{N}$ (which vanishes by a similar argument to the one used to bound \eqref{eq:eps_bound}). Note that we can rewrite the first sum in \eqref{eq:change_ratio} as
\begin{align*}
    &\sum_{x=1}^{N-2} \left\{g(\eta_s(x))-\frac{\bar\varphi_N^\kappa(x)}{\bar\varphi_N^\kappa(x+1)}g(\eta_s(x+1))\right\}  H^l_s\left(\frac{x}{N}\right)
    \\&\phantom{=}+\sum_{x=1}^{N-2} \left\{\frac{\bar\varphi_N^\kappa(x)}{\bar\varphi_N^\kappa(x+1)}-1\right\}g(\eta_s(x+1))  H^l_s\left(\frac{x}{N}\right).
\end{align*}
The choice of this rewriting will become clear later in the proof; see \eqref{eq:zero_bulk} and below. Now, by \cite[Lemma 3.3]{agnr25} and Remark \ref{remark:barphi_bounds}, we have that
\begin{equation*}
    \frac{E_{\bar\nu_N^\kappa}[g(\eta(x))^2]}{\bar\varphi_N^\kappa(x)}\le g^*+\bar\varphi_N^\kappa(x)\le g^*+\mymax\left\{\frac{\alpha}{\lambda}, \frac{\beta}{\delta}\right\}
\end{equation*}
for all $x\in I_N$. Moreover, note that
\begin{equation}\label{eq:barphi_diff}
    \bar\varphi_N^\kappa(x+1) -\bar\varphi_N^\kappa(x)\equiv \frac{\kappa\beta \lambda-\kappa\alpha \delta}{\kappa\lambda \delta (N-2)+(\lambda+ \delta)N}
\end{equation}
for all $x\in I_N$, and hence, by Remark \ref{remark:barphi_bounds},
\begin{equation*}
    \left\{\frac{\bar\varphi_N^\kappa(x)}{\bar\varphi_N^\kappa(x+1)}-1\right\}^2 \le \frac{\left(\frac{\beta\lambda-\alpha\delta}{\lambda\delta(N-2)}\right)^2}{\mymin\left\{\frac{\alpha}{\lambda}, \frac{\beta}{\delta}\right\}^2}.
\end{equation*}

By Young's inequality, for every trajectory and every
$G\in C([0,T]\times[0,1])$,
 we have that

\begin{align*}
    &\int_0^T\sum_{x=1}^{N-2} \left\{\frac{\bar\varphi_N^\kappa(x)}{\bar\varphi_N^\kappa(x+1)}-1\right\}g(\eta_s(x+1))  G_s\left(\frac{x}{N}\right)\de s
    \\&-\int_0^T \frac{1}{N}\sum_{x=1}^{N-1}\bar\varphi_N^\kappa(x)G_s\left(\frac{x}{N}\right)^2\de s
    \\&\le \int_0^T \frac{N}{4}\sum_{x=1}^{N-2} \left\{\frac{\bar\varphi_N^\kappa(x)}{\bar\varphi_N^\kappa(x+1)}-1\right\}^2 \frac{g(\eta_s(x+1)) ^2}{\bar\varphi_N^\kappa(x)}\de s =:B_N^\kappa.
\end{align*}

The right-hand side is nonnegative and independent of $G$.
Moreover, by stochastic domination and the second-moment
bound,
\begin{align*}
    \expected_{Q_\kappa^N}[B_N^\kappa]
    &\le
    \frac{TN}{4}\sum_{x=1}^{N-2}
    \left\{\frac{\bar\varphi_N^\kappa(x)}{\bar\varphi_N^\kappa(x+1)}-1\right\}^2 \frac{ E_{\bar\nu_N^\kappa}[g(\eta(x+1))^2]}{\bar\varphi_N^\kappa(x)}
    \\    \\& \le \frac{TN}{4(N-2)}\,
    \frac{\left(\frac{\beta\lambda-\alpha\delta}{\lambda\delta}\right)^2}
    {\mymin\left\{\frac{\alpha}{\lambda}, \frac{\beta}{\delta}\right\}^2}
    \left(g^*+\mymax\left\{\frac{\alpha}{\lambda}, \frac{\beta}{\delta}\right\}\right)
    \\&\lesssim \left(\frac{\beta\lambda-\alpha\delta}{\lambda\delta\mymin\left\{\frac{\alpha}{\lambda}, \frac{\beta}{\delta}\right\}}\right)^2\left(g^*+\mymax\left\{\frac{\alpha}{\lambda}, \frac{\beta}{\delta}\right\}\right).
\end{align*}
Above, the constant in $\lesssim$ depends on $T$ but not $\kappa$ nor $N$.
Thus, this contribution is bounded uniformly in $\kappa>0$
and $N\ge3$. Since $B_N^\kappa$ does not depend on the test
function, the same bound applies when taking the maximum
over $H^0,\ldots,H^q$.

Thus, we have proved that we only need to bound
\begin{equation*}
    \begin{split}      
    \myliminf_{N\to\infty} \expected_{Q_\kappa^N} \Bigg[\mymax_{0\le l\le q}\Bigg\{&\int_0^T\Bigg(\sum_{x=1}^{N-2} \left\{g(\eta_s(x))-\frac{\bar\varphi_N^\kappa(x)}{\bar\varphi_N^\kappa(x+1)}g(\eta_s(x+1))\right\}  H^l_s\left(\frac{x}{N}\right) 
    \\&+\sum_{x=1}^{N-2} \left\{\bar\varphi_N^\kappa(x+1)-\bar\varphi_N^\kappa(x)\right\}  H^l_s\left(\frac{x}{N}\right) 
    \\&-\frac{c}{N}\sum_{x=1}^{N-2}{{ \frac{g(\eta_s(x))+g(\eta_s(x+1))}{2}}} H^l_s\left(\frac{x}{N}\right)^2 \\&-\frac{c-1}{N}\sum_{x=1}^{N-1} \bar\varphi_N^\kappa(x)H^l_s\left(\frac{x}{N}\right)^2  
    \\&+\left\{\bar\varphi_N^\kappa(1)-g(\eta_s(1))\right\} H^l_s(0)
    \\&+\left\{g(\eta_s(N-1))-\bar\varphi_N^\kappa(N-1)\right\} H^l_s(1)
    \\&-\frac{c}{\kappa\lambda} \left\{g(\eta_s(1))+\bar\varphi_N^\kappa(1)\right\}H_s^l(0)^2
    \\&-\frac{c}{\kappa\delta} \left\{g(\eta_s(N-1)) +\bar\varphi_N^\kappa(N-1)\right\}H_s^l(1)^2\Bigg)\de s\Bigg\}\Bigg].
\end{split}
\end{equation*}

\medskip \textbf{Variational Formula.} By the entropy inequality and Jensen's inequality (for a precise argument, see the derivation of \cite[Equation (3.42)]{agnr25}), as well as the argument given below \cite[Equation (5.25)]{dgn21}, it suffices to bound
\begin{align*}
    &\frac{H(\mu_N|\bar\nu_N^\kappa)}{N}
    \\&+\frac{1}{N} \mylog  \expected_{\bar\nu_N^\kappa} \Bigg[\myexp\Bigg\{\int_0^T\Bigg(N\sum_{x=1}^{N-2} \left\{g(\eta(x))-\frac{\bar\varphi_N^\kappa(x)}{\bar\varphi_N^\kappa(x+1)}g(\eta(x+1))\right\}  H_s\left(\frac{x}{N}\right) 
    \\&+N\sum_{x=1}^{N-2} \left\{\bar\varphi_N^\kappa(x+1)-\bar\varphi_N^\kappa(x)\right\}H_s\left(\frac{x}{N}\right) 
    \\&-c\sum_{x=1}^{N-2} {{ \frac{g(\eta(x))+g(\eta(x+1))}{2}}} H_s\left(\frac{x}{N}\right)^2-(c-1)\sum_{x=1}^{N-1}  \bar\varphi_N^\kappa(x)H_s\left(\frac{x}{N}\right)^2
    \\&+N\left\{\bar\varphi_N^\kappa(1)-g(\eta(1))\right\} H_s(0)
    \\&+N\left\{g(\eta(N-1))-\bar\varphi_N^\kappa(N-1)\right\} H_s(1)
    \\&-\frac{Nc}{\kappa\lambda} \left\{g(\eta(1))+\bar\varphi_N^\kappa(1)\right\}H_s(0)^2
    \\&-\frac{Nc}{\kappa\delta} \left\{g(\eta(N-1)) +\bar\varphi_N^\kappa(N-1)\right\}H_s(1)^2\Bigg)\de s\Bigg\}\Bigg]
\end{align*}
for some fixed $H\in C^{0,1}([0, T]\times[0,1])$. Now, by the entropy assumption $H(\mu^N|\bar\nu_N^\kappa)\lesssim N$, the entropy term is bounded from above by a constant independent of $\kappa$. As for the remaining term, by the Feynman-Kac formula \cite[Lemma A.1]{bmns17}, it is bounded from above by
\begin{equation}\label{eq:sup_all}
    \begin{split}
   &\int_0^T   \mysup_f\Bigg(\int_{\Omega_N} \sum_{x=1}^{N-2} \left\{g(\eta(x)) - \frac{\bar\varphi^\kappa_N(x)}{\bar\varphi^\kappa_N(x+1)}g(\eta(x+1))\right\} H_s\left(\frac{x}{N}\right) f(\eta) \de\bar\nu_N^\kappa
    \\&+ \sum_{x=1}^{N-2} \left\{\bar\varphi_N^\kappa(x+1) -\bar\varphi_N^\kappa(x)\right\} H_s\left(\frac{x}{N}\right) 
    \\&-\frac{c}{N}\int_{\Omega_N} \sum_{x=1}^{N-2} {{ \frac{g(\eta(x))+g(\eta(x+1))}{2}}} H_s\left(\frac{x}{N}\right)^2 f(\eta) \de\bar\nu_N^\kappa\\&-\frac{c-1}{N}\sum_{x=1}^{N-1} \bar\varphi_N^\kappa(x) H_s\left(\frac{x}{N}\right)^2\\
    &+\int_{\Omega_N}\Big( \left\{\bar\varphi_N^\kappa(1)-g(\eta(1))\right\} H_s(0)-\frac{c}{\kappa\lambda} \left\{g(\eta(1))+\bar\varphi_N^\kappa(1)\right\}H_s(0)^2\Big)\,f(\eta)\de\bar\nu_N^\kappa
    \\
        \end{split}
\end{equation}
 \begin{equation*}
    \begin{split}  
    &+\int_{\Omega_N}\Big(\left\{g(\eta(N-1))-\bar\varphi_N^\kappa(N-1)\right\} H_s(1)
   \\&\qquad\qquad\qquad -\frac{c}{\kappa\delta} \left\{g(\eta(N-1)) +\bar\varphi_N^\kappa(N-1)\right\}H_s(1)^2\Big)\,f(\eta)\de\bar\nu_N^\kappa
    \\&+N\langle \mathcal{L}^{N, \kappa}\sqrt{f}, \sqrt{f}\rangle_{\bar\nu_N^\kappa} \Bigg)\de s,
    \end{split}
\end{equation*}
where the supremum runs along all probability densities $f$ with respect to $\bar\nu_N^\kappa$. 
Let now
\begin{align*}
    &\mathscr{D}_N^l(\sqrt{f}):=\frac{\kappa\lambda}{2N}\int_{\Omega_N}g(\eta(1))\left[\sqrt{f(\eta^{1-})}-\sqrt{f(\eta)}\right]^2\de \bar\nu_N^\kappa,    \\&\mathscr{D}_N^0(\sqrt{f}):=\frac{1}{2}\int_{\Omega_N}\sum_{\substack{x, y\in I_N \\ |x-y|=1}}g(\eta(x))\left[\sqrt{f(\eta^{x, y})}-\sqrt{f(\eta)}\right]^2 \de \bar\nu_N^\kappa,
    \\&\mathscr{D}_N^r(\sqrt{f}):=\frac{\kappa\delta}{2N}\int_{\Omega_N}g(\eta(N-1))\left[\sqrt{f(\eta^{(N-1)-})}-\sqrt{f(\eta)}\right]^2\de \bar\nu_N^\kappa .
\end{align*}
By \cite[Lemma 3.4]{gnr25}, we have that
\begin{equation}\label{eq:dirichlet_bounds}
    \langle \mathcal{L}^{N, \kappa}\sqrt{f}, \sqrt{f}\rangle_{\bar\nu_N^\kappa}\le -\mathscr{D}_N^l(\sqrt{f})-\mathscr{D}_N^0(\sqrt{f})-\mathscr{D}_N^r(\sqrt{f}).
\end{equation}

\medskip
\textbf{The Bulk Terms.} We start by treating the bulk terms, namely the top three lines of \eqref{eq:sup_all}. We write
\begin{align}
    &\int_{\Omega_N}\sum_{x=1}^{N-2} \left\{g(\eta(x)) - \frac{\bar\varphi_N(x)}{\bar\varphi_N(x+1)}g(\eta(x+1))\right\} H_s\left(\frac{x}{N}\right) f(\eta) \de\bar\nu_N^\kappa\nonumber
    \\&=\int_{\Omega_N}\sum_{x=1}^{N-2} g(\eta(x))\left[f(\eta) -f(\eta^{x, x+1}) \right]H_s\left(\frac{x}{N}\right) \de\bar\nu_N^\kappa
    \label{eq:young_bulk}
    \\&\phantom{=}+\int_{\Omega_N}\sum_{x=1}^{N-2} \left\{g(\eta(x))f(\eta^{x, x+1}) - \frac{\bar\varphi_N(x)}{\bar\varphi_N(x+1)}g(\eta(x+1))f(\eta)\right\} H_s\left(\frac{x}{N}\right) \de\bar\nu_N^\kappa. \label{eq:zero_bulk}
\end{align}
By performing the change of variable $\eta\mapsto\eta^{x+1, x}$ in the leftmost term of \eqref{eq:zero_bulk} and noting that $\frac{\bar\nu_N^\kappa(\eta^{x+1,x})}
{\bar\nu_N^\kappa(\eta)}
=
\frac{\bar\varphi_N^\kappa(x)}
{\bar\varphi_N^\kappa(x+1)}
\frac{g(\eta(x+1))}
{g(\eta(x)+1)}$, it is easy to see that 
\begin{align*}
    \eqref{eq:zero_bulk}=\int_{\Omega_N}\sum_{x=1}^{N-2}\Bigg\{&g(\eta(x)+1)
\frac{\bar\varphi_N^\kappa(x)}
{\bar\varphi_N^\kappa(x+1)}
\frac{g(\eta(x+1))}
{g(\eta(x)+1)}
\\&- \frac{\bar\varphi_N(x)}{\bar\varphi_N(x+1)}g(\eta(x+1))\Bigg\} f(\eta)H_s\left(\frac{x}{N}\right) \de\bar\nu_N^\kappa=0.
\end{align*}
On the other hand, following the proof of \cite[Lemma 5.7.3]{kl99}, we also have that

\begin{equation*}
    \eqref{eq:young_bulk}\le 2\int_{\Omega_N} \frac{1}{N}\sum_{x=1}^{N-2} \frac{g(\eta(x))+g(\eta(x+1))}{2} H_s\left(\frac{x}{N}\right)^2 f(\eta) \de\bar\nu_N^\kappa+N\mathscr{D}^0_{N}\left(\sqrt{f}\right),
\end{equation*}
Recalling \eqref{eq:dirichlet_bounds}, {{for the choice $c=2$}} it only remains to treat the second and fourth lines of \eqref{eq:sup_all}. To this end, recalling \eqref{eq:barphi_diff} and by Young's inequality, we see that
\begin{align*}
    &\sum_{x=1}^{N-2} \left\{\bar\varphi_N^\kappa(x+1) -\bar\varphi_N^\kappa(x)\right\} H_s\left(\frac{x}{N}\right) \\&\le \frac{N}{2}\left(\frac{\kappa\beta \lambda-\kappa\alpha \delta}{\kappa\lambda \delta (N-2)}\right)^2\sum_{x=1}^{N-1}\frac{1}{\bar\varphi_N^\kappa(x)}+\frac{1}{2N}\sum_{x=1}^{N-1} \bar\varphi_N^\kappa(x)H_s\left(\frac{x}{N}\right)^2.
\end{align*}
But then, by Remark \ref{remark:barphi_bounds}, the first summand on the right-hand side of the last display is bounded from above by a constant independent of $\kappa$, whereas the second term and the sum in the fourth line of \eqref{eq:sup_all}  {{are negative for the choice $c=2$}}. This concludes the bound.

\medskip
\textbf{The Boundary Terms.} It only remains to treat the boundary terms, namely the bottom three lines of \eqref{eq:sup_all}. We can write the {{first integral appearing in the fifth line of \eqref{eq:sup_all}}} as
\begin{align}
    &\int_{\Omega_N}  \big\{\bar\varphi_N^\kappa(1)-g(\eta(1)) \}H_s(0) f(\eta)\de\bar\nu_N^\kappa\nonumber
    \\&=H_s(0)\int_{\Omega_N} \left\{\bar\varphi_N^\kappa(1)f(\eta)-g(\eta(1))f(\eta^{1-})\right\}\de\bar\nu_N^\kappa\label{eq:change_1}
    \\&\phantom{=} -H_s(0) \int_{\Omega_N} g(\eta(1))\left[f(\eta)-f(\eta^{1-})\right]\de\bar\nu_N^\kappa.\label{eq:young_1}
\end{align}
Performing the change of variable $\eta\mapsto \eta^{1+}$ in the rightmost term of \eqref{eq:change_1} and using the identity $\frac{\bar\nu_N^\kappa(\eta^{1+})}{\bar\nu_N^\kappa(\eta)}=\frac{\bar\varphi_N^\kappa(1)}{g(\eta(1)+1)}$
(which follows immediately from the definition of $\bar\nu_N^\kappa$), we see that
\begin{align*}
    \eqref{eq:change_1}&=H_s(0)\int_{\Omega_N} \left\{\bar\varphi_N^\kappa(1)-\bar\varphi_N^\kappa(1)\right\} f(\eta) \de\bar\nu_N^\kappa=0.
\end{align*}
On the other hand, by Young's inequality, for any $A>0$ we have that
\begin{align*}
    \eqref{eq:young_1}&\le  \frac{A}{2} \int_{\Omega_N} g(\eta(1))\left[\sqrt{f(\eta^{1-})}-\sqrt{f(\eta)}\right]^2\de\bar\nu_N^\kappa
    \\&+\frac{H_s(0)^2}{2A}\int_{\Omega_N} g(\eta(1))\left[\sqrt{f(\eta^{1-})}+\sqrt{f(\eta)}\right]^2\de\bar\nu_N^\kappa.
\end{align*}
Choosing $A=\kappa\lambda$ and recalling \eqref{eq:dirichlet_bounds}, the first term cancels with $-N\mathscr{D}_N^l(\sqrt{f})$; the second term, by the same change of variable as above, instead satisfies
\begin{align*}
    &\frac{H_s(0)^2}{2\kappa\lambda}\int_{\Omega_N} g(\eta(1))\left[\sqrt{f(\eta^{1-})}+\sqrt{f(\eta)}\right]^2\de\bar\nu_N^\kappa
    \\&\le  \frac{H_s(0)^2}{\kappa\lambda}\int_{\Omega_N} \left\{g(\eta(1))+\bar\varphi_N^\kappa(1)\right\}f(\eta)\de\bar\nu_N^\kappa,
\end{align*}
and so when we sum it with the rightmost term in the {fifth} line of \eqref{eq:sup_all} (with the choice $c=2$)  gives a negative term. The argument for the right boundary term is identical. This ends the proof.
\end{proof}

\section{Proof of the Properties of the Energy Functional}\label{sec:properties}
In this section, we prove Theorem \ref{thm:energy_properties}.
\begin{proof}[Proof of Theorem \ref{thm:energy_properties}] 
We start by noting that the set $C^{0,1}([0, T]\times[0, 1])$ is a dense subset of $L^2_{\kappa, \xi}([0, T]\times[0, 1])$; this follows from a very similar argument to the one given in the proof of \cite[Lemma A.1]{dgn21}. Moreover, by the definition of $\energy_{\kappa, c}(\cdot)$ and by the linearity of $\mathscr{T}_{\kappa, \xi}(\cdot)$, we have that  
\begin{equation*}
    \mathscr{T}_{\kappa, \xi}(H)-c\llangle H, H\rrangle_{\kappa, \xi}\le M_0
\end{equation*} 
and
\begin{equation*}
    -\mathscr{T}_{\kappa, \xi}(H)-c\llangle H, H\rrangle_{\kappa, \xi}\le M_0,
\end{equation*}
which yields
\begin{equation*}
    |\mathscr{T}_{\kappa, \xi}(H)|\le c\llangle H, H\rrangle_{\kappa, \xi}+ M_0.
\end{equation*}
Hence,
\begin{equation*}
    \|\mathscr{T}_{\kappa, \xi}\|:= \mysup_{\substack{H\in C^{0, 1}([0, T]\times[0, 1]) \\ \llangle H, H\rrangle_{\kappa, \xi}=1}}|\mathscr{T}_{\kappa, \xi}(H)|\le c+M_0,
\end{equation*}
so $\mathscr{T}_{\kappa, \xi}$ is a linear function on $C^{0, 1}([0, T]\times[0, 1])$ and it is bounded with respect to the norm of $L^2_{\kappa, \xi}([0, T]\times[0, 1])$. But then, $\mathscr{T}_{\kappa, \xi}(\cdot)$ can be extended to $L^2_{\kappa, \xi}([0, T]\times[0, 1])$, and by Riesz's representation theorem, there exists $f\in L^2_{\kappa, \xi}([0, T]\times[0, 1])$ such that, for all $H\in C^{0, 1}([0, T]\times[0, 1])$, $\mathscr{T}_{\kappa, \xi}(H)=\llangle f, H\rrangle_{\kappa, \xi}$.

\medskip

For every
$H\in C_c^{0,\infty}([0,T]\times(0,1))$,
the boundary terms in the weighted inner product vanish.
Thus, the Riesz representation identity yields
\begin{equation}\label{eq:rie}
\llangle \Phi(\xi)-\bar\varphi^\kappa,\partial_u H\rrangle
=
\llangle fw,H\rrangle,
\end{equation}
where 
\[
w:=\Phi(\xi)+\bar\varphi^\kappa
\]
Integrating the term involving $\bar\varphi^\kappa$ in \eqref{eq:rie} by parts,
we obtain
\[
\llangle \Phi(\xi),\partial_u H\rrangle
=
\llangle fw-\partial_u\bar\varphi^\kappa,H\rrangle
=
-\llangle D,H\rrangle,
\]
where \[
D:=\partial_u\bar\varphi^\kappa-fw
\]

Since   $\xi\in\mathscr B$ and $\Phi$ is Lipschitz, the function
$w$
is bounded. Hence, the Riesz representative
$f\in L^2_{\kappa,\xi}$ satisfies
\[
\int_0^T\!\!\int_0^1 |fw|^2\,du\,dt
\le
\|w\|_{L^\infty((0,T)\times(0,1))}
\int_0^T\!\!\int_0^1 |f|^2w\,du\,dt
<\infty.
\]Moreover, $\partial_u\bar\varphi^\kappa$ is bounded, then the function $D$
belongs to $L^2((0,T)\times(0,1))$.
Therefore, $D$ is the weak spatial derivative of $\Phi(\xi)$,
and we denote
\begin{equation}
    \label{eq: 4,2}
    \partial_u\Phi(\xi)
:=
D=\partial_u\bar\varphi^\kappa-fw.
\end{equation}
In particular, \eqref{eq:rie} can be rewritten as
\begin{equation}\label{eq:ipp_1}
\llangle \Phi(\xi)-\bar\varphi^\kappa,\partial_u H\rrangle
=
-\llangle
\partial_u\Phi(\xi)-\partial_u\bar\varphi^\kappa,H
\rrangle.
\end{equation}
Together with  $\xi\in\mathscr B$ and $\Phi$ is Lipschitz, we have $\Phi(\xi)\in L^2((0,T)\times(0,1))$, and 
this proves that
$\Phi(\xi)\in L^2(0,T;\mathcal H^1)$.

\bigskip

We now establish the energy identity. By the Riesz representation,
for every $H\in C^{0,1}([0,T]\times[0,1])$,
\begin{equation}\label{eq:energy_terms}
\begin{aligned}
\mathscr{T}_{\kappa,\xi}(H)
-c\|H\|_{\kappa,\xi}^2
&=\llangle f,H\rrangle_{\kappa,\xi}
-c\|H\|_{\kappa,\xi}^2\\
&=\frac{1}{4c}\|f\|_{\kappa,\xi}^2
-c\left\|H-\frac{f}{2c}\right\|_{\kappa,\xi}^2,
\end{aligned}
\end{equation}
where $\|H\|_{\kappa,\xi}^2
:=\llangle H,H\rrangle_{\kappa,\xi}$.
Taking the supremum over $H$ yields
\[
\energy_{\kappa,c}(\xi)
\le \frac{1}{4c}\|f\|_{\kappa,\xi}^2.
\]

Conversely, by the density of
$C^{0,1}([0,T]\times[0,1])$ in $L^2_{\kappa,\xi}$,
there exists a sequence $(H^n)_{n\ge1}$ in this test function
space such that
\[
H^n\longrightarrow\frac{f}{2c}
\qquad\text{in }L^2_{\kappa,\xi}.
\]
Using $H^n$ in \eqref{eq:energy_terms}, we obtain
\[
\energy_{\kappa,c}(\xi)
\ge
\frac{1}{4c}\|f\|_{\kappa,\xi}^2
-c\left\|H^n-\frac{f}{2c}\right\|_{\kappa,\xi}^2.
\]
Letting $n\to\infty$, we conclude that
\begin{equation*}
\energy_{\kappa,c}(\xi)
=\frac{1}{4c}\|f\|_{\kappa,\xi}^2.
\end{equation*}
Since, using the equality \eqref{eq: 4,2}, we can write  $f=-(
\partial_u\Phi(\xi)-\partial_u\bar\varphi^\kappa)/w$, then 
\begin{equation*}
\energy_{\kappa,c}(\xi)
=\frac{1}{4c}\Bigg\|\frac{\partial_u\Phi(\xi)-\partial_u\bar\varphi^\kappa}{\Phi(\xi)+\bar\varphi^\kappa}\Bigg\|_{\kappa,\xi}^2.
\end{equation*}

\bigskip

Finally, we identify the boundary quantities obtained from
the Riesz representation. Since
$\Phi(\xi)\in L^2(0,T;\mathcal H^1)$, 
the Sobolev traces are defined, then let us assume that
the values $\Phi(\xi_t(0))$ and $\Phi(\xi_t(1))$ are the traces.
The Sobolev integration by parts formula yields
\begin{equation}\label{eq:T_rewrite_2}
\begin{aligned}
\llangle \Phi(\xi)-\bar\varphi^\kappa,\partial_u H\rrangle
={}&-\llangle
\partial_u\Phi(\xi)-\partial_u\bar\varphi^\kappa,H
\rrangle\\
&+\int_0^T H_t(1)
\bigl(\Phi(\xi_t(1))-\bar\varphi^\kappa(1)\bigr)
\,\de t\\
&-\int_0^T H_t(0)
\bigl(\Phi(\xi_t(0))-\bar\varphi^\kappa(0)\bigr)
\,\de t
\end{aligned}
\end{equation}
for every $H\in C^{0,1}([0,T]\times[0,1])$.

On the other hand, the Riesz representation gives
\begin{equation}\label{eq:T_rewrite}
\begin{aligned}
\llangle \Phi(\xi)-\bar\varphi^\kappa,\partial_u H\rrangle
={}&-\llangle
\partial_u\Phi(\xi)-\partial_u\bar\varphi^\kappa,H
\rrangle\\
&+\int_0^T H_t(0)
\frac{f_t(0)\bigl(\Phi(\xi_t(0))
+\bar\varphi^\kappa(0)\bigr)}
{\kappa\lambda}\,\de t\\
&+\int_0^T H_t(1)
\frac{f_t(1)\bigl(\Phi(\xi_t(1))
+\bar\varphi^\kappa(1)\bigr)}
{\kappa\delta}\,\de t.
\end{aligned}
\end{equation}

Comparing \eqref{eq:T_rewrite_2} and \eqref{eq:T_rewrite},
and choosing $H_t(u)=h(t)(1-u)$ with
$h\in C_c^\infty(0,T)$, we obtain

\begin{equation}
\label{eq.4.7}
\int_0^T h(t)
\left[
\Phi(\xi_t(0))-\bar\varphi^\kappa(0)
+\frac{f_t(0)\bigl(\Phi(\xi_t(0))
+\bar\varphi^\kappa(0)\bigr)}{\kappa\lambda}
\right]\,\de t=0.
\end{equation}

We first show that the expression in brackets belongs to
$L^1(0,T)$. Indeed, since
$\Phi(\xi)\in L^2(0,T;H^1(0,1))$,
the Sobolev trace of $\Phi(\xi)$ at $u=0$ belongs to
$L^2(0,T)$. Consequently,
\[
\Phi(\xi_\cdot(0))-\bar\varphi^\kappa(0)
\in L^2(0,T)\subset L^1(0,T).
\]
Moreover, by the Cauchy--Schwarz inequality,
\[
\begin{aligned}
&\int_0^T |f_t(0)|
\bigl(\Phi(\xi_t(0))+\bar\varphi^\kappa(0)\bigr)\,\de t\\
&\quad\le
\left(
\int_0^T |f_t(0)|^2
\bigl(\Phi(\xi_t(0))+\bar\varphi^\kappa(0)\bigr)\,\de t
\right)^{1/2}\\
&\qquad\times
\left(
\int_0^T
\bigl(\Phi(\xi_t(0))+\bar\varphi^\kappa(0)\bigr)\,\de t
\right)^{1/2}
<\infty,
\end{aligned}
\]
where the finiteness of the first integral follows from
$f\in L^2_{\kappa,\xi}([0,T]\times[0,1])$,
while the second is finite by the Sobolev trace theorem.
Thus, the expression in brackets in \eqref{eq.4.7}
belongs to $L^1(0,T)$.

Since \eqref{eq.4.7} holds for every
$h\in C_c^\infty(0,T)$, the fundamental lemma of the
calculus of variations
(see \cite[Theorem 1.24]{Dacorogna2004})
implies that
\begin{equation}\label{eq:pre_boundary}
f_t(0)\bigl(\Phi(\xi_t(0))+\bar\varphi^\kappa(0)\bigr)
=
-\kappa\lambda
\bigl(\Phi(\xi_t(0))-\bar\varphi^\kappa(0)\bigr)
\end{equation}
for almost every $t\in(0,T)$.

Similarly, choosing $H_t(u)=h(t)u$, we obtain
\[
f_t(1)\bigl(\Phi(\xi_t(1))+\bar\varphi^\kappa(1)\bigr)
=
\kappa\delta
\bigl(\Phi(\xi_t(1))-\bar\varphi^\kappa(1)\bigr)
\]
for almost every $t\in(0,T)$.

Since
\[
\begin{aligned}
\partial_u\bar\varphi^\kappa(0)
&=\kappa\bigl(\lambda\bar\varphi^\kappa(0)-\alpha\bigr)\quad\mbox{and }\\
\partial_u\bar\varphi^\kappa(1)
&=\kappa\bigl(\beta-\delta\bar\varphi^\kappa(1)\bigr),
\end{aligned}
\]
we conclude that
\[
\begin{aligned}
&q_0(t):=\partial_u\bar\varphi^\kappa(0)
-f_t(0)\bigl(\Phi(\xi_t(0))
+\bar\varphi^\kappa(0)\bigr)=\kappa\bigl(\lambda \Phi(\xi_t(0))-\alpha\bigr)\quad\mbox{and }\\
&q_1(t):=\partial_u\bar\varphi^\kappa(1)
-f_t(1)\bigl(\Phi(\xi_t(1))
+\bar\varphi^\kappa(1)\bigr)=\kappa\bigl(\beta-\delta \Phi(\xi_t(1))\bigr)
\end{aligned}
\]
for almost every $t\in(0,T)$.

Note that the identities above are precisely the Robin
boundary conditions prescribed in \eqref{eq:zre_robin},
interpreted in a generalized sense.
Indeed, since we only know that
$\partial_u\Phi(\xi)\in L^2((0,T)\times(0,1))$,
the boundary traces of $\partial_u\Phi(\xi)$
are not necessarily well-defined in the Sobolev sense.
Nevertheless, the Riesz representation provides
well-defined boundary quantities $q_0$ and $q_1$,
which satisfy the prescribed Robin boundary conditions.
We may therefore interpret $q_0$ and $q_1$ as
generalized boundary values of $\partial_u\Phi(\xi)$
at $u=0$ and $u=1$, respectively.
This interpretation does not imply that they coincide
with the Sobolev traces of $\partial_u\Phi(\xi)$,
which would require additional regularity and
a compatibility argument.

\end{proof}

\section{Proof of the Convergence of Weak Solutions}\label{sec:convergence}

In this section, we prove Theorem \ref{thm:convergence}. We first present a proof highlighting the main steps, and we then proceed to state and prove all the required auxiliary results.

\subsection{Proof of Theorem \ref{thm:convergence}}

\begin{proof}[Proof of Theorem \ref{thm:convergence}] Our proof is divided into two main steps. We first show that, for any subsequence of $\{\Phi(\rho^{\kappa})\}_{\kappa>0}$, we can extract a subsequence $\{\Phi(\rho^{\kappa_j})\}_{j\in\mathbb{N}}$ which converges strongly in $L^2([0, T]\times[0, 1])$, and that this implies that $\{\rho^{\kappa_j}\}_{j\in\mathbb{N}}$ also converges strongly in $L^2([0, T]\times[0, 1])$. Next, we prove that any limit point $\rho^*$ of $\{\rho^\kappa\}_{\kappa>0}$ is such that $\Phi(\rho^*)\in {{L^2(0,T; \mathcal{H}^1)}}$, satisfies the appropriate integral formulation and, in the case $\kappa\to\infty$, the appropriate boundary conditions.

\medskip
\noindent\textbf{Extraction of Strongly Convergent Subsequences.}
By Lemma \ref{lemma:Phi_L2_H1}, we have
\[
    \sup_{\kappa>0}
    \|\Phi(\rho^\kappa)\|_{L^2(0,T;\mathcal H^1)}
    <\infty.
\]
Moreover, Lemma \ref{lemma:time_reg} gives
\[
    \|\partial_t\rho^\kappa\|_{L^2(0,T;\mathcal H^{-1})}
    \le
    \|\partial_u\Phi(\rho^\kappa)\|_{L^2([0,T]\times[0,1])},
\]
where $\mathcal H^{-1}=(\mathcal H_0^1)^*$.
Consequently, Lemma \ref{lemma:Phi_L2_H1} also yields
\[
    \sup_{\kappa>0}
    \|\partial_t\rho^\kappa\|_{L^2(0,T;\mathcal H^{-1})}
    <\infty.
\]
Thus, the hypotheses of Lemma \ref{lemma:time_translations} are satisfied, and the family
$\{\Phi(\rho^\kappa)\}_{\kappa>0}$ is relatively compact in
$L^2([0,T]\times[0,1])$.

Therefore, from any sequence of positive parameters, we can
extract a subsequence $(\kappa_j)_{j\in\mathbb N}$ such that
$\Phi(\rho^{\kappa_j})$ converges strongly in
$L^2([0,T]\times[0,1])$ to some function $F$.
By Proposition \ref{prop:comparison}, there exists a constant $c_0<\infty$,
independent of $\kappa$, such that
$0\le\rho^\kappa\le c_0$ almost everywhere.
Hence,
\[
    \Phi(0)\le F\le\Phi(c_0)
    \qquad\text{almost everywhere}.
\]
We may therefore define $\rho^*:=\Phi^{-1}(F)$, so that
\[
    \Phi(\rho^{\kappa_j})\longrightarrow\Phi(\rho^*)
    \qquad\text{strongly in }L^2([0,T]\times[0,1]).
\]

 In particular, denoting by $\mu$ the Lebesgue measure on $[0, T]\times[0, 1]$, the sequence $\{\Phi(\rho^{\kappa_j})\}_{j\in\mathbb{N}}$ converges to $\Phi(\rho^*)$ in $\mu$-measure. Since, for any finite measure, convergence in measure is preserved by continuous functions and since $u\mapsto \Phi^{-1}(u)^2$ is continuous, this implies that the sequence $\{(\rho^{\kappa_j})^2\}_{j\in\mathbb{N}}$ converges to $(\rho^*)^2$ in $\mu$-measure. Moreover, by Lemma \ref{lemma:rho_Lp}, the sequence $\{(\rho^{\kappa_j})^2\}_{j\in\mathbb{N}}$ is bounded in $L^p([0, T]\times[0, 1])$ for any $p>0$, and hence it is uniformly integrable: thus, by the Lebesgue-Vitali Theorem \cite[Theorem 4.5.4]{bog07}, $\{(\rho^{\kappa_j})^2\}_{j\in\mathbb{N}}$ converges to $(\rho^*)^2$ in $L^1(\mu)$, or equivalently, the sequence $\{\rho^{\kappa_j}\}_{j\in\mathbb{N}}$ converges to $\rho^*$ strongly in $L^2([0, T]\times[0, 1])$.

\medskip
\textbf{Any Limit Point Solves the Nonlinear PDE.} Now, by Lemma \ref{lemma:Phi*_L2_H1}, we have that $\Phi(\rho^*)\in {{L^2(0,T; \mathcal{H}^1)}}.$ Moreover, in the case $\kappa_j\to\infty$ as $j\to\infty$, by Lemma \ref{lemma:rho*_boundary} we have that $\rho^*$ satisfies the boundary conditions given in item iv) of Definition \ref{def:weak_dirichlet}. Hence, it only remains to show that $\rho^*$ satisfies the integral formulation of the appropriate PDE, namely item iii) of Definition \ref{def:weak_neumann} when $\kappa_j\to0$, and of Definition \ref{def:weak_dirichlet} when $\kappa_j\to\infty$. By performing an integration by parts, we can rewrite the integral formulation \eqref{eq:integral_zre_rob} as 
\begin{equation} \label{eq:integral_ibp_1}
    \begin{split}
    \langle\rho^{\kappa_j}_t, G_t\rangle &- \langle\gamma, G_0\rangle - \int_0^t\langle \rho^{\kappa_j}_s, \partial_s G_s\rangle \de s +\int_0^t \langle\partial_u \Phi(\rho^{\kappa_j}_s), \partial_u G_s\rangle\de s
    \\&-\kappa_j\int_0^t \big\{(\beta-\delta\Phi(\rho^{\kappa_j}_s(1)))G_s(1) - (\lambda \Phi(\rho^{\kappa_j}_s(0))-\alpha)G_s(0) \big\}\de s = 0.
    \end{split}
\end{equation}
for all $t\in[0, T]$ and $G\in C^{1, 2}([0, T]\times[0, 1])$. Now, in the case $\kappa_j\to\infty$, recall that in Definition \ref{def:weak_dirichlet} we only consider functions $G\in C^{1, 2}_0([0, T]\times[0, 1])$, so that the second line of \eqref{eq:integral_ibp_1} vanishes. In the case $\kappa_j\to0$, instead, by the standard inequality $\|f\|_\infty\le \|f\|_2+\|\partial_u f\|_2$ applied to $f=\Phi(\rho^{\kappa_j}_s)$ and the Cauchy-Schwarz inequality,
\begin{align*}
    \kappa_j &\left| \int_0^t \big\{(\beta-\delta\Phi(\rho^{\kappa_j}_r(1)))G_s(1) - (\lambda \Phi(\rho^{\kappa_j}_r(0))-\alpha)G_s(0) \big\}\de s\right|
    \\&\le \kappa_j (\alpha+\beta) \|G\|_\infty T
    \\&\phantom{\le}+\kappa_j (\lambda+\delta)\|G\|_\infty \left|\int_0^t \left\{\|\Phi(\rho^{\kappa_j}_s)\|_2+\|\partial_u\Phi(\rho^{\kappa_j}_s)\|_2\right\}\de s\right|
    \\&\le \kappa_j \|G\|_\infty \left\{(\alpha+\beta)T+(\lambda+\delta)\sqrt{T}\|\Phi(\rho^{\kappa_j})\|_{{{L^2(0,T; \mathcal{H}^1)}}}\right\}.
\end{align*}
But now, by Lemma \ref{lemma:Phi_L2_H1}, $\{\|\Phi(\rho^{\kappa_j})\|_{{{L^2(0,T; \mathcal{H}^1)}}}\}_{j\in\mathbb{N}}$ is bounded, and hence the last line above vanishes as $j\to\infty$.

It only remains to analyse the first line of \eqref{eq:integral_ibp_1}. This is achieved by Lemmas \ref{lemma:rho*_weak} and \ref{lemma:Phi*_weak}, which guarantee that, as $j\to\infty$, it converges to
\begin{equation*}
    \langle\rho^*_t, G_t\rangle - \langle\gamma, G_0\rangle - \int_0^t\langle \rho^*_s, \partial_s G_s\rangle \de s +\int_0^t \langle\partial_u \Phi(\rho^*_s), \partial_u G_s\rangle\de s,
\end{equation*}
which in turn, by performing an integration by parts, we can rewrite as
\begin{equation} \label{eq:proof_neumann}
    \begin{split}
    \langle\rho^*_t, G_t\rangle &- \langle\gamma, G_0\rangle - \int_0^t\{\langle \rho^*_s, \partial_s G_s\rangle + \langle\Phi(\rho^*_s), \Delta G_s\rangle\}\de s
    \\&+\int_0^t \{\Phi(\rho^*_s(1))\partial_u G_s(1)-\Phi(\rho^*_s(0))\partial_u G_s(0) \}\de s.
    \end{split}
\end{equation}
Since weak solutions to \eqref{eq:zre_neumann} are unique (see Proposition \ref{prop:uniqueness_zre}), this completes the proof for the Neumann case. As for the Dirichlet case, it suffices to apply Lemma \ref{lemma:rho*_boundary}, which guarantees that $\Phi(\rho^*_t(0))=\frac{\alpha}{\lambda}$ and $\Phi(\rho^*_t(1))=\frac{\beta}{\delta}$ for almost every $t\in(0, T]$, and allows us to rewrite \eqref{eq:proof_neumann} as
\begin{align*} 
    \langle\rho^*_t, G_t\rangle &- \langle\gamma, G_0\rangle - \int_0^t\{\langle \rho^*_s, \partial_s G_s\rangle + \langle\Phi(\rho^*_s), \Delta G_s\rangle\}\de s
    \\&+\int_0^t \left\{\frac{\beta}{\delta}\partial_u G_s(1)-\frac{\alpha}{\lambda}\partial_u G_s(0) \right\}\de s.
\end{align*}
Since weak solutions to \eqref{eq:zre_dirichlet} are unique (Proposition \ref{prop:uniqueness_zre}), the proof is completed.
\end{proof}

\subsection{Auxiliary Results}
In this section, we state and prove some auxiliary results which we used in the proof of Theorem \ref{thm:convergence} above. We start by showing that, under the assumptions of the theorem, the family $\{\Phi(\rho^\kappa)\}_{\kappa>0}$ is bounded in ${{L^2(0,T; \mathcal{H}^1)}}$.

\begin{lemma}\label{lemma:Phi_L2_H1}
Let $\gamma:[0,1]\to[0,\infty)$ be a measurable function
satisfying \eqref{eq:initial_condition}.
For each $\kappa>0$, let $\rho^\kappa$ be the unique weak
solution to the Robin problem \eqref{eq:zre_robin}  with initial profile $\gamma$.
Then the family $\{\Phi(\rho^\kappa)\}_{\kappa>0}$ is bounded
in $L^2(0,T;\mathcal H^1)$; that is,
\[
    \sup_{\kappa>0}
    \|\Phi(\rho^\kappa)\|_{L^2(0,T;\mathcal H^1)}<\infty.
\]
\end{lemma}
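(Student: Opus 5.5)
The bound splits into two parts: the $L^2$ norm of $\Phi(\rho^\kappa)$, which comes from the uniform $L^\infty$ bound, and the $L^2$ norm of $\partial_u\Phi(\rho^\kappa)$, which comes from combining Theorem \ref{thm:energy_estimate} with Theorem \ref{thm:energy_properties}.

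\emph{The $L^2$ part.} By Proposition \ref{prop:comparison} there is $c_0<\infty$, independent of $\kappa$, with $0\le\rho^\kappa\le c_0$ almost everywhere. Since $\Phi$ is increasing and $g^*$-Lipschitz by \eqref{eq:Phi_g*}, we get $0\le \Phi(\rho^\kappa)\le \Phi(c_0)$, hence
\[
\|\Phi(\rho^\kappa)\|_{L^2([0,T]\times[0,1])}^2\le T\,\Phi(c_0)^2 .
\]
Alternatively, Lemma \ref{lemma:rho_Lp} with $\ell=2$ and the bound $\Phi(\rho)\le \Phi(0)+g^*\rho$ give the same conclusion. In particular $\rho^\kappa\in\mathscr B$. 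I would state explicitly that the boundary values are bounded, since they are Sobolev traces of a function with values in $[\Phi(0),\Phi(c_0)]$.

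\emph{The gradient part.} Theorem \ref{thm:energy_estimate} gives $\energy_{\kappa,c}(\rho^\kappa)\le M_0$, with $c$ and $M_0$ independent of $\kappa$. Applying Theorem \ref{thm:energy_properties} with $\xi=\rho^\kappa$, the identity \eqref{eq:energy_id} yields
\[
\frac{1}{4c}\int_0^T\!\!\int_0^1 \frac{\bigl(\partial_u\Phi(\rho^\kappa)-\partial_u\bar\varphi^\kappa\bigr)^2}{\Phi(\rho^\kappa)+\bar\varphi^\kappa}\,\de u\,\de t\le M_0 .
\]
Here I keep only the bulk part of the weighted norm, because the boundary Dirac contributions are nonnegative. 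The weight has a $\kappa$-independent upper bound: by Remark \ref{remark:barphi_bounds} and the explicit formula \eqref{eq:limit_fug}, $\bar\varphi^\kappa\le\max\{\alpha/\lambda,\beta/\delta\}$, so
\[
\Phi(\rho^\kappa)+\bar\varphi^\kappa\le \Phi(c_0)+\max\{\alpha/\lambda,\beta/\delta\}=:K .
\]
It follows that $\|\partial_u\Phi(\rho^\kappa)-\partial_u\bar\varphi^\kappa\|_2^2\le 4cKM_0$.

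It remains to bound $\partial_u\bar\varphi^\kappa$ uniformly. From \eqref{eq:limit_fug},
\[
|\partial_u\bar\varphi^\kappa|=\frac{\kappa|\alpha\delta-\beta\lambda|}{\kappa\lambda\delta+\lambda+\delta}\le \Bigl|\frac{\alpha}{\lambda}-\frac{\beta}{\delta}\Bigr| ,
\]
which is independent of $\kappa$. The triangle inequality then gives $\sup_\kappa\|\partial_u\Phi(\rho^\kappa)\|_2<\infty$. Combined with the $L^2$ part, this proves the lemma.

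\emph{Main obstacle.} The delicate point is the applicability of Theorem \ref{thm:energy_properties}: it needs $\xi\in\mathscr B$, including measurable, bounded boundary values. It also needs the weak derivative given there to coincide with the derivative of $\Phi(\rho^\kappa)$ from Definition \ref{def:weak_robin}. The coincidence follows from uniqueness of weak derivatives. The boundary requirement uses the uniform bound $c_0$ from Proposition \ref{prop:comparison} together with the convention that the boundary values are Sobolev traces. Once these hold, every constant involved ($c$, $M_0$, $c_0$, $K$) is $\kappa$-independent, and the estimate is immediate.
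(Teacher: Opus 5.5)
Your proposal is correct and follows essentially the same route as the paper: the $L^2$ bound comes from the comparison principle, and the gradient bound comes from the energy identity \eqref{eq:energy_id} after discarding the nonnegative boundary terms, bounding the weight $\Phi(\rho^\kappa)+\bar\varphi^\kappa$ uniformly in $\kappa$, and controlling $\partial_u\bar\varphi^\kappa$ explicitly. Your added remarks on $\rho^\kappa\in\mathscr B$ and on identifying the weak derivative from Theorem \ref{thm:energy_properties} with that of Definition \ref{def:weak_robin} spell out points the paper's proof leaves implicit.
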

\begin{proof}
Note that, by \eqref{eq:Phi_g*},
\begin{align*}
    \left\|\Phi(\rho^\kappa)\right\|_2^2&=\int_0^T\int_0^1 \Phi(\rho^\kappa_t(u))^2\de u\de t
    \\&\le 2\int_0^T\int_0^1 \left\{\Phi(\rho^\kappa_t(u))-\Phi(0)\right\}^2\de u\de t + 2\int_0^T\int_0^1 \Phi(0)^2\de u\de t
    \\&\le 2(g^*)^2\int_0^T\int_0^1 \rho^\kappa_t(u)^2\de u\de t + 2T\Phi(0)^2.
\end{align*}
By Proposition \ref{prop:comparison}, this implies that $\{\|\Phi(\rho^\kappa)\|_2^2\}_\kappa$ is bounded. Now, note that
\begin{align*}
    \|\partial_u&\Phi(\rho^\kappa)\|_2^2
    \\&= \llangle \partial_u\Phi(\rho^\kappa), \partial_u\Phi(\rho^\kappa)\rrangle
    \\&\le 2\llangle \partial_u\Phi(\rho^\kappa)-\partial_u\bar\varphi^\kappa,\partial_u\Phi(\rho^\kappa)-\partial_u\bar\varphi^\kappa\rrangle+2\llangle \partial_u\bar\varphi^\kappa, \partial_u\bar\varphi^\kappa\rrangle.
\end{align*}
Multiplying and dividing by $\Phi(\rho^\kappa(u))+\bar\varphi^\kappa(u)$ inside the first bracket in the right hand side above, we have
\begin{align*}
    &\llangle \partial_u\Phi(\rho^\kappa)-\partial_u\bar\varphi^\kappa,\partial_u\Phi(\rho^\kappa)-\partial_u\bar\varphi^\kappa\rrangle
    \\&\le \mysup_{u\in[0, 1]}\{\Phi(\rho^\kappa(u))+\bar\varphi^\kappa(u)\}\Big\llangle \frac{\partial_u\Phi(\rho^\kappa)-\partial_u\bar\varphi^\kappa}{\Phi(\rho^\kappa)+\bar\varphi^\kappa}, \partial_u\Phi(\rho^\kappa)-\partial_u\bar\varphi^\kappa\Big\rrangle.
\end{align*}
Now, remembering the definition of the norm $\|\cdot\|_{\kappa, \rho^\kappa}$ and \eqref{eq:energy_id}, 
we can bound from above the last bracket above as
\begin{align*}
&\Big\llangle \frac{\partial_u\Phi(\rho^\kappa)-\partial_u\bar\varphi^\kappa}{\Phi(\rho^\kappa)+\bar\varphi^\kappa}, \partial_u\Phi(\rho^\kappa)-\partial_u\bar\varphi^\kappa\Big\rrangle\\=
    &\Big\llangle \frac{\partial_u\Phi(\rho^\kappa)-\partial_u\bar\varphi^\kappa}{\Phi(\rho^\kappa)+\bar\varphi^\kappa}, \frac{\partial_u\Phi(\rho^\kappa)-\partial_u\bar\varphi^\kappa} {\Phi(\rho^\kappa)+\bar\varphi^\kappa} \{\Phi(\rho^\kappa)+\bar\varphi^\kappa\}\Big\rrangle
\\\leq &
   \Big\llangle \frac{\partial_u\Phi(\rho^\kappa)-\partial_u\bar\varphi^\kappa}{\Phi(\rho^\kappa)+\bar\varphi^\kappa}, \frac{\partial_u\Phi(\rho^\kappa)-\partial_u\bar\varphi^\kappa} {\Phi(\rho^\kappa)+\bar\varphi^\kappa}\Big\rrangle_{\kappa, \rho^\kappa}=\;4c\,\energy_{\kappa,c}(\rho^\kappa).
\end{align*}
 By Theorem \ref{thm:energy_estimate}, $\energy_{\kappa,c}(\rho^\kappa)\leq M_0$.
Thus, 
\begin{align*}
    \|\partial_u&\Phi(\rho^\kappa)\|_2^2
    \le\; 8c\,M_0\,\mysup_{u\in[0, 1]}\{\Phi(\rho^\kappa(u))+\bar\varphi^\kappa(u)\}+2\llangle \partial_u\bar\varphi^\kappa, \partial_u\bar\varphi^\kappa\rrangle.
\end{align*}
But now, recalling the definition of $\bar\varphi^\kappa$ in \eqref{eq:limit_fug}, by Proposition \ref{prop:comparison} and since $\Phi$ is strictly increasing,
\begin{equation}\label{eq:necessary_Linfty}
    \mysup_{\kappa>0}\mysup_{u\in[0, 1]}\{\Phi(\rho^\kappa(u))+\bar\varphi^\kappa(u)\}\le \left(\Phi(c_0)+\mymax\left\{\frac{\alpha}{\lambda}, \frac{\beta}{\delta}\right\}\right)
\end{equation}
for some constant $c_0<\infty$. Moreover,
\begin{align*}
    \mysup_{\kappa>0}\llangle \partial_u\bar\varphi^\kappa, \partial_u\bar\varphi^\kappa\rrangle&= T\mysup_{\kappa>0} \left(\frac{\kappa\beta\lambda-\kappa\alpha\delta}{\kappa\lambda\delta+\lambda+\delta}\right)^2\le T \left(\frac{\beta\lambda-\alpha\delta}{\lambda\delta}\right)^2.
\end{align*}
Hence, we get
\begin{equation}\label{eq:partialPhi_bdd}
    \|\partial_u\Phi(\rho^\kappa)\|_2^2 \le  8cM_0 \left(\Phi(c_0)+\mymax\left\{\frac{\alpha}{\lambda}, \frac{\beta}{\delta}\right\}\right) + 2T\left(\frac{\beta\lambda-\alpha\delta}{\lambda\delta}\right)^2,
\end{equation}
where $c, c_0, M_0<\infty$ do not depend on $\kappa$. This completes the proof.
\end{proof}

\begin{remark}\label{remark:necessary_Linfty}
 Lemma \ref{lemma:Phi_L2_H1} requires the family $\{\Phi(\rho^\kappa)\}_\kappa$ to be bounded in $L^\infty([0, T]\times[0, 1])$; see equation \eqref{eq:necessary_Linfty} {{and it comes from the fact  that $\{\rho^\kappa\}_\kappa$ is bounded in $L^\infty([0,T]\times[0,1])$, as established by the comparison principle (Proposition \ref{prop:comparison}).
}}

\end{remark}

We now define the Sobolev space $\mathcal{H}_0^1$ via
\begin{equation*}
    \mathcal{H}_0^1:=\left\{\varphi\in\mathcal{H}^1: \varphi(0)=\varphi(1)=0\right\},
\end{equation*}
and the Sobolev space 
$\mathcal{H}^{-1}$ as the dual of $\mathcal{H}_0^1$, equipped with the norm
\begin{equation}\label{eq:H-1_norm}
    \|\cdot\|_{\mathcal{H}^{-1}}:=\mysup_{\varphi\in \mathcal{H}_0^1, \|\varphi\|_{\mathcal{H}^1}=1} \left| \langle \cdot, \varphi  \rangle \right|.
\end{equation}

\begin{lemma}\label{lemma:time_reg}
For every $\kappa>0$, the distributional time derivative
of $\rho^\kappa$ belongs to
$L^2(0,T;\mathcal H^{-1})$, where
$\mathcal H^{-1}=(\mathcal H_0^1)^*$.
More precisely, for almost every $t\in(0,T)$,
\[
\partial_t\rho^\kappa_t[\varphi]
=
-\langle\partial_u\Phi(\rho^\kappa_t),
\partial_u\varphi\rangle,
\qquad \varphi\in\mathcal H_0^1.
\]
In particular,
\begin{equation}\label{eq:rho_L2_H-1}
\|\partial_t\rho^\kappa\|_
{L^2(0,T;\mathcal H^{-1})}^2
\leq
\int_0^T
\|\partial_u\Phi(\rho^\kappa_t)\|_{L^2(0,1)}^2
\,dt
<\infty.
\end{equation}
\end{lemma}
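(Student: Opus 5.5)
We test the weak formulation \eqref{eq:integral_zre_rob} against functions vanishing on the boundary, which kills the boundary and Robin terms. Fix $\varphi\in C_c^\infty((0,1))$ and $h\in C_c^\infty((0,T))$, and take $G_s(u)=h(s)\varphi(u)$, which lies in $C^{1,2}([0,T]\times[0,1])$. Choose $t=T$, so that $G_T=G_0=0$. The boundary terms in \eqref{eq:integral_zre_rob} then vanish because $\varphi$ and $\partial_u\varphi$ vanish at $u=0,1$. We obtain
\[
-\int_0^T h'(s)\langle\rho^\kappa_s,\varphi\rangle\,\de s=\int_0^T h(s)\langle\Phi(\rho^\kappa_s),\Delta\varphi\rangle\,\de s .
\]
Since $\Phi(\rho^\kappa)\in L^2(0,T;\mathcal H^1)$ by Definition \ref{def:weak_robin}, we integrate by parts in space. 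The boundary terms vanish because $\partial_u\varphi(0)=\partial_u\varphi(1)=0$ for compactly supported $\varphi$. This gives $\langle\Phi(\rho^\kappa_s),\Delta\varphi\rangle=-\langle\partial_u\Phi(\rho^\kappa_s),\partial_u\varphi\rangle$ for a.e. $s$. Hence, in the sense of distributions in time, $\frac{\de}{\de t}\langle\rho^\kappa_t,\varphi\rangle=-\langle\partial_u\Phi(\rho^\kappa_t),\partial_u\varphi\rangle$.

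Next we define, for a.e. $t$, the functional $\ell_t(\varphi):=-\langle\partial_u\Phi(\rho^\kappa_t),\partial_u\varphi\rangle$. By Cauchy--Schwarz, $|\ell_t(\varphi)|\le\|\partial_u\Phi(\rho^\kappa_t)\|_{L^2(0,1)}\|\partial_u\varphi\|_{L^2}\le\|\partial_u\Phi(\rho^\kappa_t)\|_{L^2(0,1)}\|\varphi\|_{\mathcal H^1}$. Since $C_c^\infty((0,1))$ is dense in $\mathcal H_0^1$, $\ell_t$ extends uniquely to an element of $\mathcal H^{-1}$ with
\[
\|\ell_t\|_{\mathcal H^{-1}}\le\|\partial_u\Phi(\rho^\kappa_t)\|_{L^2(0,1)} .
\]
Measurability of $t\mapsto\ell_t$ is inherited from that of $t\mapsto\partial_u\Phi(\rho^\kappa_t)\in L^2(0,1)$, because the map $v\mapsto-\langle v,\partial_u\cdot\rangle$ is bounded and linear from $L^2$ to $\mathcal H^{-1}$. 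Squaring and integrating in time gives \eqref{eq:rho_L2_H-1}, and the right-hand side is finite by item ii) of Definition \ref{def:weak_robin}.

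Finally, we identify $\ell$ with the distributional time derivative of $\rho^\kappa$ viewed as an $\mathcal H^{-1}$-valued function. We have $\rho^\kappa\in L^2(0,T;L^2)\subset L^2(0,T;\mathcal H^{-1})$. The first step shows that $-\int h'\rho^\kappa\,\de s=\int h\,\ell\,\de s$ when tested against every $\varphi$ in the dense set $C_c^\infty((0,1))$. Both sides are elements of $\mathcal H^{-1}$ (Bochner integrals), so the identity holds in $\mathcal H^{-1}$, and therefore $\partial_t\rho^\kappa=\ell\in L^2(0,T;\mathcal H^{-1})$. The only mildly delicate point is this density and identification step together with the measurability; everything else is routine.
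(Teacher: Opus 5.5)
Your proof is correct and follows the same route as the paper. You test the weak formulation against functions supported in $(0,T)\times(0,1)$ so that the boundary and Robin terms vanish, integrate by parts in space, bound the resulting functional on $\mathcal H_0^1$ by Cauchy--Schwarz, and integrate in time. Your product test functions $h(s)\varphi(u)$ and the explicit measurability and density identification of $\partial_t\rho^\kappa$ in $\mathcal H^{-1}$ simply supply details the paper leaves implicit.
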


\begin{proof}
Fix $\kappa>0$. Since $\rho^\kappa$ is a weak
solution of \eqref{eq:integral_zre_rob}, testing
the weak formulation against
$G\in C_c^\infty((0,T)\times(0,1))$
and integrating by parts in space yields
\begin{equation}\label{eq:time_reg_weak}
-\int_0^T
\langle\rho^\kappa_t,\partial_tG_t\rangle\,dt
=
-\int_0^T
\langle\partial_u\Phi(\rho^\kappa_t),
\partial_uG_t\rangle\,dt.
\end{equation}

For almost every $t\in(0,T)$, define the
linear functional
$F_t:\mathcal H_0^1\to\mathbb R$ by
\[
F_t[\varphi]
:=
-\langle\partial_u\Phi(\rho^\kappa_t),
\partial_u\varphi\rangle.
\]
By the Cauchy--Schwarz inequality,
\[
\begin{aligned}
|F_t[\varphi]|
&\leq
\|\partial_u\Phi(\rho^\kappa_t)\|_{L^2(0,1)}
\|\partial_u\varphi\|_{L^2(0,1)}
\\
&\leq
\|\partial_u\Phi(\rho^\kappa_t)\|_{L^2(0,1)}
\|\varphi\|_{\mathcal H_0^1}.
\end{aligned}
\]
Consequently, $F_t\in\mathcal H^{-1}$ and
\[
\|F_t\|_{\mathcal H^{-1}}
\leq
\|\partial_u\Phi(\rho^\kappa_t)\|_{L^2(0,1)}.
\]

Since
$\Phi(\rho^\kappa)\in
L^2(0,T;\mathcal H^1)$,
we obtain
\[
\int_0^T
\|F_t\|_{\mathcal H^{-1}}^2\,dt
\leq
\int_0^T
\|\partial_u\Phi(\rho^\kappa_t)\|_{L^2(0,1)}^2
\,dt
<\infty.
\]

Finally, identity \eqref{eq:time_reg_weak}
shows that $F$ coincides with the
distributional time derivative of
$\rho^\kappa$. Therefore,
\[
\partial_t\rho^\kappa
\in L^2(0,T;\mathcal H^{-1}),
\]
and \eqref{eq:rho_L2_H-1} follows.
\end{proof}

\begin{lemma}\label{lemma:time_translations}
Let $\{\rho^\kappa\}_{\kappa\in\mathbb N}$
be a sequence of nonnegative functions such that
\begin{equation}\label{eq:time_translations_assumptions}
\sup_{\kappa\in\mathbb N}
\left\{
\|\Phi(\rho^\kappa)\|_{L^2(0,T;\mathcal H^1)}
+
\|\partial_t\rho^\kappa\|_
{L^2(0,T;\mathcal H^{-1})}
\right\}<\infty,
\end{equation}
where $\mathcal H^{-1}=(\mathcal H_0^1)^*$.
Assume that $\Phi$ is nondecreasing and
Lipschitz continuous.

Then there exists a constant $C>0$,
independent of $\kappa$ and $h$, such that
\begin{equation}\label{eq:time_translations_bound}
\int_0^{T-h}
\|\Phi(\rho^\kappa_{t+h})
-\Phi(\rho^\kappa_t)\|_{L^2(0,1)}^2\,dt
\leq Ch^{1/2}
\end{equation}
for every $h\in(0,T)$.

Consequently,
$\{\Phi(\rho^\kappa)\}_{\kappa\in\mathbb N}$
is relatively compact in
$L^2((0,T)\times(0,1))$.
\end{lemma}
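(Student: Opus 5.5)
We prove the time-translation estimate \eqref{eq:time_translations_bound} with a duality argument, and then deduce relative compactness from a Riesz--Fréchet--Kolmogorov (or Simon-type) criterion. Write $w^\kappa:=\Phi(\rho^\kappa)$ and fix $h\in(0,T)$. Since $\Phi$ is nondecreasing and Lipschitz with constant $g^*$, we have the pointwise inequality
\[
\bigl(\Phi(a)-\Phi(b)\bigr)^2\le g^*\bigl(\Phi(a)-\Phi(b)\bigr)(a-b),\qquad a,b\ge0.
\]
Hence
\[
\int_0^{T-h}\|w^\kappa_{t+h}-w^\kappa_t\|_2^2\,dt\le g^*\int_0^{T-h}\langle \rho^\kappa_{t+h}-\rho^\kappa_t,\; w^\kappa_{t+h}-w^\kappa_t\rangle\,dt .
\]
Write $\rho^\kappa_{t+h}-\rho^\kappa_t=\int_t^{t+h}\partial_s\rho^\kappa_s\,ds$ in $\mathcal H^{-1}$. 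This pairs against $\psi_t:=w^\kappa_{t+h}-w^\kappa_t$, so we need $\psi_t\in\mathcal H^1_0$.

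\textbf{Main obstacle.} The function $\psi_t$ is only in $\mathcal H^1$, not $\mathcal H^1_0$: the boundary values of $\Phi(\rho^\kappa)$ are not fixed in the Robin and Neumann cases. My first approach is to subtract an interpolating function. Let $\ell_t(u):=(1-u)\psi_t(0)+u\psi_t(1)$, so that $\psi_t-\ell_t\in\mathcal H^1_0$ with $\|\psi_t-\ell_t\|_{\mathcal H^1}\lesssim\|\psi_t\|_{\mathcal H^1}$ by the trace inequality $\|f\|_\infty\le\|f\|_2+\|\partial_uf\|_2$. The remainder $\langle\rho^\kappa_{t+h}-\rho^\kappa_t,\ell_t\rangle$ is handled by a crude bound. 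By the uniform $L^\infty$ bound from Proposition~\ref{prop:comparison} (or the $L^2$ bound of Lemma~\ref{lemma:rho_Lp}), together with $|\psi_t(0)|+|\psi_t(1)|\lesssim \|w^\kappa_{t+h}\|_{\mathcal H^1}+\|w^\kappa_t\|_{\mathcal H^1}$, we can only get $O(1)$ from this, not $O(h^{1/2})$. So this alone does not work. I would instead reduce the compactness claim to what is genuinely needed: relative compactness follows from the time-translation estimate applied to $\psi_t-\ell_t$, combined with a separate control of the boundary piece.

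If this reduction fails, the fallback is to use the full weak formulation \eqref{eq:integral_ibp_1} rather than $\mathcal H^{-1}$ duality. Testing with $\psi_t$, which is allowed since test functions of class $\mathcal H^1$ in space are obtained by density, we have
\[
\langle\rho^\kappa_{t+h}-\rho^\kappa_t,\psi_t\rangle=-\int_t^{t+h}\langle\partial_u w^\kappa_s,\partial_u\psi_t\rangle\,ds+\kappa\int_t^{t+h}\bigl[\text{boundary flux}\bigr]_s\cdot\psi_t(\cdot)\,ds .
\]
The bulk term is handled as in the interior case. For the boundary term, the Robin identities of Theorem~\ref{thm:energy_properties} together with the energy bound control $\kappa(\lambda w^\kappa_s(0)-\alpha)$ in $L^2_s$ uniformly in $\kappa$, via the boundary part of $\|\cdot\|_{\kappa,\rho^\kappa}$ and \eqref{eq:energy_id}.

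\textbf{Bulk term.} For the interior part the computation is standard. By Fubini and Cauchy--Schwarz,
\[
\int_0^{T-h}\!\!\int_t^{t+h}\bigl|\langle\partial_uw^\kappa_s,\partial_u\psi_t\rangle\bigr|\,ds\,dt\le \int_0^{T-h}\Bigl(h^{1/2}\|\partial_uw^\kappa\|_{L^2}\Bigr)\|\psi_t\|_{\mathcal H^1}\,dt\le 2h^{1/2}\|w^\kappa\|_{L^2\mathcal H^1}\cdot T^{1/2}\|w^\kappa\|_{L^2\mathcal H^1}.
\]
By \eqref{eq:time_translations_assumptions} this is bounded by $Ch^{1/2}$. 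The same estimate holds using $\|\partial_t\rho^\kappa\|_{L^2\mathcal H^{-1}}$ in place of $\|\partial_uw^\kappa\|_{L^2}$.

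\textbf{Compactness.} Finally, uniform boundedness in $L^2(0,T;\mathcal H^1)$ gives equicontinuity of space translations, via $\|f(\cdot+z)-f\|_2\le |z|\,\|\partial_uf\|_2$ after extension. Combined with \eqref{eq:time_translations_bound}, the Fréchet--Kolmogorov theorem (equivalently Simon's lemma, using the compact embedding $\mathcal H^1\hookrightarrow L^2$) yields relative compactness of $\{\Phi(\rho^\kappa)\}$ in $L^2((0,T)\times(0,1))$.
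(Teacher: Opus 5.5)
\textbf{There is a genuine gap.} You correctly identify the obstacle: $\psi_t:=\Phi(\rho^\kappa_{t+h})-\Phi(\rho^\kappa_t)$ lies in $\mathcal H^1$ but not in $\mathcal H^1_0$, so it cannot be paired with $\rho^\kappa_{t+h}-\rho^\kappa_t\in\mathcal H^{-1}$. Neither of your routes resolves it.

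The lifting by $\ell_t$ leaves an $O(1)$ remainder, as you note yourself. The ``reduction'' to an estimate for $\psi_t-\ell_t$ plus ``separate control of the boundary piece'' does not prove \eqref{eq:time_translations_bound}, because the boundary piece is exactly the difficulty and you never control it.

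The fallback leaves the setting of the lemma. The lemma concerns an arbitrary sequence satisfying only \eqref{eq:time_translations_assumptions}, not Robin solutions. Even for Robin solutions, its key claim is false. By \eqref{eq:energy_id} and the boundary identities, the boundary part of the energy at $u=0$ equals $\frac{\kappa\lambda}{4c}\int_0^T\frac{(\Phi(\rho^\kappa_t(0))-\bar\varphi^\kappa(0))^2}{\Phi(\rho^\kappa_t(0))+\bar\varphi^\kappa(0)}\,dt$. So the energy bound only gives $\|\kappa(\lambda\Phi(\rho^\kappa_\cdot(0))-\alpha)\|_{L^2(0,T)}=O(\kappa^{1/2})$, not a uniform bound. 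No uniform bound can be expected: for data incompatible with the Dirichlet values (e.g.\ $\gamma\equiv0$), the limiting Dirichlet flux behaves like $t^{-1/2}$ near $t=0$ by parabolic scaling, which is not square integrable.

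In the Robin setting one could still close your fallback by pairing this $\kappa^{1/2}$ against $\|\psi_\cdot(0)\|_{L^2}\lesssim\kappa^{-1/2}$, since both endpoint values are close to the time-independent $\bar\varphi^\kappa(0)$. But that uses the energy estimate and the $L^\infty$ bound, and still does not prove the lemma as stated.

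\textbf{The missing idea is to localize instead of lift.}
\begin{itemize}
\item Take $\chi_\varepsilon\in C_c^\infty(0,1)$ with $0\le\chi_\varepsilon\le1$, $\chi_\varepsilon=1$ on $[2\varepsilon,1-2\varepsilon]$ and $|\chi_\varepsilon'|\lesssim\varepsilon^{-1}$. Multiply your pointwise monotone--Lipschitz inequality by $\chi_\varepsilon^2$ before integrating.
\item Then $\chi_\varepsilon^2\psi_t\in\mathcal H^1_0$ with $\|\chi_\varepsilon^2\psi_t\|_{\mathcal H^1}\lesssim\varepsilon^{-1}\|\psi_t\|_{\mathcal H^1}$. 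Duality together with $\int_0^{T-h}\|\rho^\kappa_{t+h}-\rho^\kappa_t\|_{\mathcal H^{-1}}^2\,dt\le h^2\|\partial_t\rho^\kappa\|^2_{L^2(0,T;\mathcal H^{-1})}$ bounds the interior part by $Ch/\varepsilon$.
\item The boundary strip, of measure at most $4\varepsilon$, needs no duality at all: $\int_0^1(1-\chi_\varepsilon^2)|\psi_t|^2\,du\le4\varepsilon\|\psi_t\|_\infty^2\lesssim\varepsilon\|\psi_t\|_{\mathcal H^1}^2$, which integrates in time to $C\varepsilon$.
\item Choosing $\varepsilon=\sqrt h$ gives $Ch^{1/2}$; this balancing is where the exponent $1/2$ comes from.
\end{itemize}
This argument uses only \eqref{eq:time_translations_assumptions} and no bound on $\rho^\kappa$ itself. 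Your final compactness step via Simon's criterion is fine once the estimate is available.
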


\begin{proof}
Let $L>0$ be a Lipschitz constant for $\Phi$.
By assumption \eqref{eq:time_translations_assumptions},
there exist constants $M_1,M_2>0$ such that
\begin{equation}\label{eq:uniform_constants}
\sup_\kappa
\int_0^T
\|\Phi(\rho^\kappa_t)\|_{\mathcal H^1}^2\,dt
\leq M_1
\end{equation}
and
\begin{equation}\label{eq:uniform_time_constant}
\sup_\kappa
\int_0^T
\|\partial_t\rho^\kappa_t\|_{\mathcal H^{-1}}^2\,dt
\leq M_2.
\end{equation}

Fix $h\in(0,T)$ and
$\varepsilon\in(0,1/4)$.
Choose $\chi_\varepsilon\in C_c^\infty(0,1)$
such that
\[
0\leq\chi_\varepsilon\leq1,
\qquad
\chi_\varepsilon=1
\quad\text{on }[2\varepsilon,1-2\varepsilon],
\]
and
\[
\|\partial_u\chi_\varepsilon\|_{L^\infty(0,1)}
\leq \frac{K_\chi}{\varepsilon},
\]
where $K_\chi>0$ is independent of
$\varepsilon$. 
Since $\Phi$ is nondecreasing and Lipschitz
continuous, for all $a,b\geq0$,
\begin{equation}\label{eq:monotone_lipschitz}
|\Phi(a)-\Phi(b)|^2
\leq L(a-b)(\Phi(a)-\Phi(b)).
\end{equation}
Applying \eqref{eq:monotone_lipschitz}
pointwise, multiplying by
$\chi_\varepsilon^2$, and integrating,
we obtain
\begin{equation}\label{eq:cutoff_duality}
\begin{aligned}
&\int_0^{T-h}\int_0^1
\chi_\varepsilon^2
|\Phi(\rho^\kappa_{t+h})
-\Phi(\rho^\kappa_t)|^2\,du\,dt
\\
&\quad\leq
L\int_0^{T-h}
\Big\langle
\rho^\kappa_{t+h}-\rho^\kappa_t,
\chi_\varepsilon^2
\bigl(
\Phi(\rho^\kappa_{t+h})
-\Phi(\rho^\kappa_t)
\bigr)
\Big\rangle\,dt.
\end{aligned}
\end{equation}
The bracket above is well defined between
$\mathcal H^{-1}$ and $\mathcal H_0^1$,
because the cutoff function has compact
support in $(0,1)$.

By the duality inequality and
the Cauchy--Schwarz inequality in time,
the right-hand side of
\eqref{eq:cutoff_duality} is bounded by
\begin{equation}\label{eq:cutoff_CS}
\begin{aligned}
&L
\|\rho^\kappa(\cdot+h)-\rho^\kappa\|_
{L^2(0,T-h;\mathcal H^{-1})}
\;\times\,
\left\|
\chi_\varepsilon^2
\bigl(
\Phi(\rho^\kappa(\cdot+h))
-\Phi(\rho^\kappa)
\bigr)
\right\|_
{L^2(0,T-h;\mathcal H_0^1)}.
\end{aligned}
\end{equation}
We estimate these two factors separately.
First, since
$\partial_t\rho^\kappa
\in L^2(0,T;\mathcal H^{-1})$,
we have
\[
\rho^\kappa_{t+h}-\rho^\kappa_t
=
\int_t^{t+h}
\partial_s\rho^\kappa_s\,ds
\]
in $\mathcal H^{-1}$.
By the Cauchy-Schwarz inequality,
\[
\begin{aligned}
&\|\rho^\kappa_{t+h}-\rho^\kappa_t\|_
{\mathcal H^{-1}}^2
\leq
h\int_t^{t+h}
\|\partial_s\rho^\kappa_s\|_
{\mathcal H^{-1}}^2\,ds.
\end{aligned}
\]
Integrating in time and applying Fubini's
theorem yields
\[
\begin{aligned}
&\int_0^{T-h}
\|\rho^\kappa_{t+h}-\rho^\kappa_t\|_
{\mathcal H^{-1}}^2\,dt
\leq
h^2\int_0^T
\|\partial_s\rho^\kappa_s\|_
{\mathcal H^{-1}}^2\,ds
\leq M_2h^2,
\end{aligned}
\]
where the last inequality follows from
\eqref{eq:uniform_time_constant}.
Consequently,
\begin{equation}\label{eq:rho_time_shift}
\|\rho^\kappa(\cdot+h)-\rho^\kappa\|_
{L^2(0,T-h;\mathcal H^{-1})}
\leq \sqrt{M_2}\,h.
\end{equation}

We now estimate the second factor in
\eqref{eq:cutoff_CS}.
By the product rule,
\[
\begin{aligned}
&\quad\partial_u\left[
\chi_\varepsilon^2
\bigl(
\Phi(\rho^\kappa_{t+h})
-\Phi(\rho^\kappa_t)
\bigr)
\right]\\&
=
2\chi_\varepsilon
(\partial_u\chi_\varepsilon)
\bigl(
\Phi(\rho^\kappa_{t+h})
-\Phi(\rho^\kappa_t)
\bigr)
+
\chi_\varepsilon^2
\partial_u\bigl(
\Phi(\rho^\kappa_{t+h})
-\Phi(\rho^\kappa_t)
\bigr).
\end{aligned}
\]

Using the bounds on $\chi_\varepsilon$,
we obtain
\[
\begin{aligned}
&\left\|
\chi_\varepsilon^2
\bigl(
\Phi(\rho^\kappa_{t+h})
-\Phi(\rho^\kappa_t)
\bigr)
\right\|_{\mathcal H_0^1}
\\
&\quad\leq
\left(1+\frac{2K_\chi}{\varepsilon}\right)
\left(
\|\Phi(\rho^\kappa_{t+h})\|_{\mathcal H^1}
+
\|\Phi(\rho^\kappa_t)\|_{\mathcal H^1}
\right).
\end{aligned}
\]
Squaring, integrating in time, and using
$(a+b)^2\leq2(a^2+b^2)$, we get
\[
\begin{aligned}
&\int_0^{T-h}
\left\|
\chi_\varepsilon^2
\bigl(
\Phi(\rho^\kappa_{t+h})
-\Phi(\rho^\kappa_t)
\bigr)
\right\|_{\mathcal H_0^1}^2\,dt
\\
&\quad\leq
2\left(1+\frac{2K_\chi}{\varepsilon}\right)^2
\Bigg[
\int_0^{T-h}
\|\Phi(\rho^\kappa_{t+h})\|_{\mathcal H^1}^2\,dt
\\
&\hspace{90pt}+
\int_0^{T-h}
\|\Phi(\rho^\kappa_t)\|_{\mathcal H^1}^2\,dt
\Bigg].
\end{aligned}
\]
By the change of variables $s=t+h$,
\[
\begin{aligned}
&\int_0^{T-h}
\|\Phi(\rho^\kappa_{t+h})\|_{\mathcal H^1}^2\,dt
\\
&\quad=
\int_h^T
\|\Phi(\rho^\kappa_s)\|_{\mathcal H^1}^2\,ds
\leq M_1.
\end{aligned}
\]
The second integral is also bounded by
$M_1$. Hence,
\[
\begin{aligned}
&\int_0^{T-h}
\left\|
\chi_\varepsilon^2
\bigl(
\Phi(\rho^\kappa_{t+h})
-\Phi(\rho^\kappa_t)
\bigr)
\right\|_{\mathcal H_0^1}^2\,dt
\leq
4M_1
\left(1+\frac{2K_\chi}{\varepsilon}\right)^2.
\end{aligned}
\]
Since $\varepsilon<1/4$,
\[
1+\frac{2K_\chi}{\varepsilon}
\leq
\frac{1+2K_\chi}{\varepsilon}.
\]
Taking square roots,
\begin{equation}\label{eq:cutoff_H1_bound}
\begin{aligned}
&\left\|
\chi_\varepsilon^2
\bigl(
\Phi(\rho^\kappa(\cdot+h))
-\Phi(\rho^\kappa)
\bigr)
\right\|_
{L^2(0,T-h;\mathcal H_0^1)}
\\
&\quad\leq
\frac{2(1+2K_\chi)\sqrt{M_1}}
{\varepsilon}.
\end{aligned}
\end{equation}

Combining \eqref{eq:cutoff_CS},
\eqref{eq:rho_time_shift}, and
\eqref{eq:cutoff_H1_bound}, we conclude that
\begin{equation}\label{eq:interior_time_shift}
\begin{aligned}
&\int_0^{T-h}\int_0^1
\chi_\varepsilon^2
|\Phi(\rho^\kappa_{t+h})
-\Phi(\rho^\kappa_t)|^2\,du\,dt
\\
&\quad\leq
2L(1+2K_\chi)\sqrt{M_1M_2}
\frac{h}{\varepsilon}.
\end{aligned}
\end{equation}

It remains to control the contribution
near the boundary.
Since $\chi_\varepsilon=1$ on
$[2\varepsilon,1-2\varepsilon]$,
the support of $1-\chi_\varepsilon^2$
has Lebesgue measure at most
$4\varepsilon$. Then for each t
\[
\begin{aligned}
&\int_0^1(1-\chi_\varepsilon^2)
|\Phi(\rho^\kappa_{t+h})-\Phi(\rho^\kappa_t)|^2\,du\\
&\quad\leq
4\varepsilon
\|\Phi(\rho^\kappa_{t+h})-\Phi(\rho^\kappa_t)\|_{L^\infty(0,1)}^2.
\end{aligned}
\]
By the one-dimensional Sobolev embedding, as established
previously, we have
\[
\|f\|_{L^\infty(0,1)}^2
\leq 2\|f\|_{\mathcal H^1}^2
\]
for any function $f\in \mathcal H^1. $
Applying this inequality to the difference, we obtain
\[
\begin{aligned}
&4\varepsilon
\|\Phi(\rho^\kappa_{t+h})
-\Phi(\rho^\kappa_t)\|_{L^\infty(0,1)}^2
\leq
8\varepsilon
\|\Phi(\rho^\kappa_{t+h})
-\Phi(\rho^\kappa_t)\|_{\mathcal H^1}^2.
\end{aligned}
\]
Next, using the inequality
\(\|f-g\|_{\mathcal H^1}^2
\leq
2\left(
\|f\|_{\mathcal H^1}^2
+
\|g\|_{\mathcal H^1}^2
\right),\)
we obtain
\[
\begin{aligned}
&\int_0^1
(1-\chi_\varepsilon^2)
|\Phi(\rho^\kappa_{t+h})
-\Phi(\rho^\kappa_t)|^2\,du
\\
&\quad\leq
16\varepsilon
\left(
\|\Phi(\rho^\kappa_{t+h})\|_{\mathcal H^1}^2
+
\|\Phi(\rho^\kappa_t)\|_{\mathcal H^1}^2
\right).
\end{aligned}
\]
Therefore,
\[
\begin{aligned}
&\int_0^{T-h}\int_0^1
(1-\chi_\varepsilon^2)
|\Phi(\rho^\kappa_{t+h})
-\Phi(\rho^\kappa_t)|^2\,du\,dt
\\
&\quad\leq
16\varepsilon
\int_0^{T-h}
\left(
\|\Phi(\rho^\kappa_{t+h})\|_{\mathcal H^1}^2
+
\|\Phi(\rho^\kappa_t)\|_{\mathcal H^1}^2
\right)\,dt
\;\leq\;
32M_1\varepsilon.
\end{aligned}
\]

Adding this estimate to
\eqref{eq:interior_time_shift}, we obtain
\[
\begin{aligned}
&\int_0^{T-h}
\|\Phi(\rho^\kappa_{t+h})
-\Phi(\rho^\kappa_t)\|_{L^2(0,1)}^2\,dt
\\
&\quad\leq
A\frac{h}{\varepsilon}+B\varepsilon,
\end{aligned}
\]
where
\[
A:=2L(1+2K_\chi)\sqrt{M_1M_2},
\qquad
B:=32M_1.
\]

For $0<h<1/64$, choose
$\varepsilon=\sqrt h$.
Then
\[
\begin{aligned}
&\int_0^{T-h}
\|\Phi(\rho^\kappa_{t+h})
-\Phi(\rho^\kappa_t)\|_{L^2(0,1)}^2\,dt
\\
&\quad\leq (A+B)\sqrt h.
\end{aligned}
\]
For $h\geq1/64$, the uniform
$L^2((0,T)\times(0,1))$ bound gives
\[
\begin{aligned}
&\int_0^{T-h}
\|\Phi(\rho^\kappa_{t+h})
-\Phi(\rho^\kappa_t)\|_{L^2(0,1)}^2\,dt
\\
&\quad\leq4M_1
\leq32M_1\sqrt h.
\end{aligned}
\]

Thus, setting
\[
C_{\mathrm{time}}
:=
\max\{A+B,32M_1\},
\]
we conclude that
\begin{equation*}
\int_0^{T-h}
\|\Phi(\rho^\kappa_{t+h})
-\Phi(\rho^\kappa_t)\|_{L^2(0,1)}^2\,dt
\leq C_{\mathrm{time}}\sqrt h
\end{equation*}
for every $h\in(0,T)$.
The constant $C_{\mathrm{time}}$ is
independent of $\kappa$ and $h$.

Now, we establish the relative compactness
of $\{\Phi(\rho^\kappa)\}_{\kappa\in\mathbb N}$
in $L^2((0,T)\times(0,1))$.
By \eqref{eq:uniform_constants}, the family
is uniformly bounded in
$L^2(0,T;\mathcal H^1(0,1))$.
Moreover, by
\eqref{eq:time_translations_bound},
\[
\begin{aligned}
&\sup_\kappa
\|\Phi(\rho^\kappa(\cdot+h))
-\Phi(\rho^\kappa)\|_
{L^2(0,T-h;L^2(0,1))}^2
\\
&\quad\leq C_{\mathrm{time}}\sqrt h
\longrightarrow 0
\qquad\text{as }h\to0.
\end{aligned}
\]
Since the embedding
\[
\mathcal H^1(0,1)
\hookrightarrow\hookrightarrow L^2(0,1)
\]
is compact, the compactness criterion of
Simon (see \cite[Theorem 3]{Simon1987}) implies that
$\{\Phi(\rho^\kappa)\}_{\kappa\in\mathbb N}$
is relatively compact in
$L^2(0,T;L^2(0,1))$,
which is precisely
$L^2((0,T)\times(0,1))$.
\end{proof}

Finally, the four lemmas that follow are adaptations of \cite[Propositions 3.1, 3.2, 3.3, and 3.5]{dgn21}, respectively, and are needed to show that limit points $\rho^*$ of $\{\rho^\kappa\}_{\kappa>0}$ belong to the correct spaces and satisfy the appropriate integral formulation. We follow the proofs therein but with some modifications. {{
Since the interacting particle system needed to obtain the equation in Definition \ref{def:weak_robin} has  unbounded occupation variables, we need to invoke the comparison principle (see Proposition \ref{prop:comparison}) to show that $\{\rho^\kappa\}_\kappa$ is bounded in $L^\infty([0,T]\times[0,1])$.
}}

\begin{lemma}\label{lemma:Phi*_L2_H1}
If $\Phi^*$ is a strong limit point of the sequence $\{\Phi(\rho^\kappa)\}_{\kappa>0}$ in $L^2([0, T]\times[0, 1])$, then $\Phi^*\in {{L^2(0,T; \mathcal{H}^1)}}$. 
\end{lemma}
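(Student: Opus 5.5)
The plan is to use weak compactness in the Hilbert space $L^2(0,T;\mathcal H^1)$ together with the uniform bound of Lemma \ref{lemma:Phi_L2_H1}. Let $\{\kappa_j\}$ be a sequence, tending to $0$ or to $\infty$, with $\Phi(\rho^{\kappa_j})\to\Phi^*$ strongly in $L^2([0,T]\times[0,1])$. By Lemma \ref{lemma:Phi_L2_H1},
\[
\sup_j \big(\|\Phi(\rho^{\kappa_j})\|_2+\|\partial_u\Phi(\rho^{\kappa_j})\|_2\big)<\infty .
\]
Hence, along a further subsequence, $\partial_u\Phi(\rho^{\kappa_j})$ converges weakly in $L^2([0,T]\times[0,1])$ to some $D^*$, by Banach--Alaoglu, or equivalently by weak sequential compactness of bounded sets in a Hilbert space.

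Next I identify $D^*$ as the weak spatial derivative of $\Phi^*$. Take $H\in C_c^{0,\infty}([0,T]\times(0,1))$. By Theorem \ref{thm:energy_properties}, or directly from item ii) of Definition \ref{def:weak_robin}, each $\Phi(\rho^{\kappa_j})$ satisfies
\[
\llangle \Phi(\rho^{\kappa_j}),\partial_u H\rrangle=-\llangle \partial_u\Phi(\rho^{\kappa_j}),H\rrangle .
\]
No boundary terms appear because $H$ is compactly supported in $(0,1)$. The left side converges to $\llangle \Phi^*,\partial_u H\rrangle$ by strong convergence, and the right side converges to $-\llangle D^*,H\rrangle$ by weak convergence. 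Such $H$ are dense enough: to pass from space-time test functions to a statement for almost every $t$, I would take $H_t(u)=h(t)\psi(u)$ with $h\in C_c^\infty(0,T)$ and $\psi$ ranging over a countable dense subset of $C_c^\infty(0,1)$. For a.e. $t$ this gives $\langle\Phi^*_t,\psi'\rangle=-\langle D^*_t,\psi\rangle$ for all such $\psi$, and hence for all $\psi\in C_c^\infty(0,1)$ by density. Therefore $\partial_u\Phi^*_t=D^*_t$ weakly for a.e. $t$.

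Since $\Phi^*\in L^2$ and $D^*\in L^2([0,T]\times[0,1])$, Fubini gives $\int_0^T\|\Phi^*_t\|_{\mathcal H^1}^2\,\de t<\infty$. By weak lower semicontinuity of the norm, this quantity is bounded by $\liminf_j\|\Phi(\rho^{\kappa_j})\|^2_{L^2(0,T;\mathcal H^1)}$, which is finite by Lemma \ref{lemma:Phi_L2_H1}.

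The only delicate point is measurability of $t\mapsto\Phi^*_t$ as an $\mathcal H^1$-valued map. This is handled by viewing $L^2(0,T;\mathcal H^1)$ as a closed subspace of $L^2(0,T;L^2)\times L^2(0,T;L^2)$ via $F\mapsto(F,\partial_uF)$. Closedness follows from the same integration-by-parts argument, and weak closedness then follows from closedness plus convexity. With that, the proof is essentially routine.
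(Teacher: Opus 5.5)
Your proposal is correct and follows essentially the same route as the paper. Both use the uniform $L^2$ bound on $\partial_u\Phi(\rho^{\kappa_j})$ from Lemma~\ref{lemma:Phi_L2_H1}, extract a weakly convergent subsequence, and identify the weak limit as $\partial_u\Phi^*$ by passing to the limit in $\llangle \Phi(\rho^{\kappa_j}),\partial_u H\rrangle=-\llangle \partial_u\Phi(\rho^{\kappa_j}),H\rrangle$ for $H\in C_c^{0,\infty}([0,T]\times(0,1))$. The only difference is that you spell out the reduction to an a.e.-$t$ statement (via $h(t)\psi(u)$ with $\psi$ in a countable family dense in the $\mathcal H^1$-norm) and the measurability issue, whereas the paper simply invokes the standard characterization of $L^2(0,T;\mathcal H^1)$.
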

\begin{proof} Let $\{\Phi(\rho^{\kappa_j})\}_{\in\mathbb{N}}$ be a subsequence of $\{\Phi(\rho^\kappa)\}_{\kappa>0}$ which converges to $\Phi^*$ strongly in $L^2([0, T]\times[0, 1])$ as $j\to\infty$.  Let $H\in C_c^{0, \infty}([0, T]\times(0, 1))$: by Theorems \ref{thm:energy_estimate} and \ref{thm:energy_properties}, we have that
\begin{equation*}
    \llangle \Phi(\rho^{\kappa_j}), \partial_u H\rrangle = - \llangle \partial_u\Phi(\rho^{\kappa_j}), H\rrangle
\end{equation*}
for all $j\in\mathbb{N}$, and hence, by the Cauchy-Schwarz inequality,
\begin{equation}\label{eq:limJ_partialPhi}
    \mylim_{j\to\infty}  \llangle \partial_u\Phi(\rho^{\kappa_j}), H\rrangle
    = \mylim_{j\to\infty} - \llangle \Phi(\rho^{\kappa_j}), \partial_u H\rrangle = -\llangle {{ \Phi^*}}, \partial_u H\rrangle.
\end{equation}
On the other hand, by Lemma \ref{lemma:Phi_L2_H1}, the sequence $\{\partial_u\Phi(\rho^{\kappa_j})\}_{j\in\mathbb{N}}$ is bounded in $L^2([0, T]\times[0, 1])$, and thus, there exist\footnote{This is a consequence of the Banach-Alaoglu Theorem and the fact that the space $L^2([0, T]\times[0, 1])$ is reflexive.} a subsequence  $\{\partial_u\Phi(\rho^{\kappa_{j_l}})\}_{l\in\mathbb{N}}$ and a function $\Psi\in L^2([0, T]\times[0, 1])$ such that $\{\partial_u\Phi(\rho^{\kappa_{j_l}})\}_{l\in\mathbb{N}}$ converges \textit{weakly} to $\Psi$ in $L^2([0, T]\times[0, 1])$ as $l\to\infty$. In particular, calling {{$\Psi=:\partial_u \Phi^*$}}, we have that
\begin{equation}\label{eq:limJ_partialPhi_2}
    \mylim_{l\to\infty} \llangle \partial_u \Phi(\rho^{\kappa_{j_l}}),  H\rrangle  = \llangle {{\partial_u \Phi^*}}, H\rrangle.
\end{equation}
Thus, by \eqref{eq:limJ_partialPhi}, we get
\begin{equation*}
    \llangle {{ \Phi^*}}, \partial_u H\rrangle=-\llangle {{\partial_u \Phi^*}}, H\rrangle.
\end{equation*}
But now, a function $\zeta$ is in ${{L^2(0,T; \mathcal{H}^1)}}$ if and only if there exists $\partial_u\zeta \in L^2([0, T]\times[0, 1])$ such that $\llangle \zeta, \partial_u H\rrangle=-\llangle \partial_u\zeta, H\rrangle$ for all $H\in C_c^{0, \infty}([0, T]\times(0, 1))$. This completes the proof.
\end{proof}

\begin{lemma}\label{lemma:Phi*_weak} Assume that the sequence $\{\rho^{\kappa_j}\}_{j\in\mathbb{N}}$ converges to $\rho^*$ strongly in $L^2([0, T]\times[0, 1])$ as $j\to\infty$. Then,
\begin{equation*}
    \mylim_{j\to\infty}\int_0^t \left\langle \partial_u \Phi(\rho_s^{\kappa_j}), \partial_u H_s\right\rangle \de s = \int_0^t \left\langle \partial_u \Phi(\rho_s^*), \partial_u H_s\right\rangle \de s
\end{equation*}
for all $t\in [0, T]$ and every $H\in C^{1, 2}([0, T]\times[0, 1])$.
\end{lemma}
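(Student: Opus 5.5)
The plan is to argue by weak compactness plus identification of the weak limit. By Lemma \ref{lemma:Phi_L2_H1}, the sequence $\{\partial_u\Phi(\rho^{\kappa_j})\}_{j\in\mathbb N}$ is bounded in $L^2([0,T]\times[0,1])$. The key claim is that its only possible weak limit point is $\partial_u\Phi(\rho^*)$. Granting this, the whole sequence converges weakly to $\partial_u\Phi(\rho^*)$ in $L^2([0,T]\times[0,1])$. The statement then follows by testing against $\mathbf 1_{[0,t]}(s)\,\partial_u H_s(u)$, which belongs to $L^2([0,T]\times[0,1])$ for every $t\in[0,T]$ and every $H\in C^{1,2}([0,T]\times[0,1])$.

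To identify the limit, I first transfer the strong convergence from $\rho^{\kappa_j}$ to $\Phi(\rho^{\kappa_j})$. Using the Lipschitz bound \eqref{eq:Phi_g*}, we get $\|\Phi(\rho^{\kappa_j})-\Phi(\rho^*)\|_2\le g^*\|\rho^{\kappa_j}-\rho^*\|_2\to0$. Hence $\Phi(\rho^*)$ is a strong limit point of $\{\Phi(\rho^\kappa)\}$, and Lemma \ref{lemma:Phi*_L2_H1} gives $\Phi(\rho^*)\in L^2(0,T;\mathcal H^1)$, so that $\partial_u\Phi(\rho^*)$ makes sense.

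Now take any subsequence along which $\partial_u\Phi(\rho^{\kappa_j})\rightharpoonup\Psi$ weakly in $L^2$; such subsequences exist by Banach--Alaoglu. For $G\in C_c^{0,\infty}([0,T]\times(0,1))$, integration by parts (valid since $\Phi(\rho^{\kappa_j})\in L^2(0,T;\mathcal H^1)$) gives
\[
\llangle \partial_u\Phi(\rho^{\kappa_j}),G\rrangle=-\llangle\Phi(\rho^{\kappa_j}),\partial_uG\rrangle.
\]
The right-hand side converges to $-\llangle\Phi(\rho^*),\partial_uG\rrangle=\llangle\partial_u\Phi(\rho^*),G\rrangle$. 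Therefore $\llangle\Psi-\partial_u\Phi(\rho^*),G\rrangle=0$ for all such $G$. Since $C_c^{0,\infty}([0,T]\times(0,1))$ is dense in $L^2([0,T]\times[0,1])$, this forces $\Psi=\partial_u\Phi(\rho^*)$ almost everywhere. Because every subsequence has a further subsequence with this same weak limit, the full sequence converges weakly.

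The main subtlety is that the test function $\mathbf 1_{[0,t]}\partial_u H$ does not vanish at the boundary. This is why I cannot pass to the limit directly through the integration by parts identity, which would produce boundary traces of $\Phi(\rho^{\kappa_j})$ that we do not control in the limit. The argument above avoids this: boundary-free test functions are used only to identify the weak limit $\Psi$, and the general test function is then handled purely by weak $L^2$ convergence, which requires no boundary information. The uniform $L^2$ bound on $\partial_u\Phi(\rho^{\kappa_j})$ from Lemma \ref{lemma:Phi_L2_H1}, which ultimately rests on Theorem \ref{thm:energy_estimate}, is exactly what makes this possible.
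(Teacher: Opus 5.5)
Your proposal is correct and follows essentially the same route as the paper: both identify the limit by integrating by parts against test functions compactly supported in $(0,1)$, using the strong convergence of $\Phi(\rho^{\kappa_j})$ and Lemma \ref{lemma:Phi*_L2_H1}, and then extend to general test functions via the uniform $L^2$ bound of Lemma \ref{lemma:Phi_L2_H1} and density. The only difference is cosmetic: the paper restricts to $[0,t]$ with a continuous time cutoff $\varphi_\delta$ and sends $\delta\to0$ before the density step, whereas you obtain weak convergence of the full sequence on $[0,T]$ and test directly against $\mathbf 1_{[0,t]}\partial_u H$, which makes that cutoff unnecessary.
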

\begin{proof}
We start by noting that it suffices to prove that
\begin{equation*}
    \mylim_{j\to\infty}\int_0^t \left\langle \partial_u \Phi(\rho_s^{\kappa_j}), G_s\right\rangle \de s = \int_0^t \left\langle \partial_u \Phi(\rho_s^*), G_s\right\rangle \de s
\end{equation*}
for all $t\in [0, T]$ and every $G\in C^{1, 1}([0, T]\times[0, 1])$. {{By the hypothesis of this lemma and the fact that $\Phi$ is Lipschitz, $\Phi(\rho^*)$ is a strong limit point of $\{\Phi(\rho^\kappa)\}_{\kappa>0}$ in $L^2([0,T]\times[0,1])$. Therefore, we can apply the result above.
}}  By \eqref{eq:limJ_partialPhi_2}, we can extract a subsequence $\{\partial_u\Phi(\rho^{\kappa_{j_l}})\}_{l\in\mathbb{N}}$ such that
\begin{equation}\label{eq:limJ_G}
    \mylim_{l\to\infty}\int_0^T \left\langle \partial_u \Phi(\rho_s^{\kappa_{j_l}}), \bar G_s\right\rangle \de s = \int_0^T \left\langle \partial_u \Phi(\rho_s^*), \bar G_s\right\rangle \de s
\end{equation}
for every $\bar G\in C_c^{0, \infty}([0, T]\times(0, 1))$. 

Let now $\varphi_\delta:[0, T]\to[0, 1]$ be a continuous map such that $\varphi_\delta(s)=1$ for $s\in[0, t]$ and $\varphi_\delta(s)=0$ for $s\in [t+\delta, T]$. If $G\in C_c^{1, \infty}([0, T]\times(0, 1))$, then $\bar G_s(u):=\varphi_\delta(s)G_s(u)$ is in $C_c^{0, \infty}([0, T]\times(0, 1))$. By \eqref{eq:limJ_G}, we see that
\begin{align*}
    &\mylim_{j\to\infty}\int_0^t \left\langle \partial_u \Phi(\rho_s^{\kappa_j}), G_s\right\rangle \de s  +\mylim_{j\to\infty}\int_t^{t+\delta} \left\langle \partial_u \Phi(\rho_s^{\kappa_j}), G_s\right\rangle \varphi_\delta(s) \de s 
    \\&=\mylim_{j\to\infty}\int_0^T \left\langle \partial_u \Phi(\rho_s^{\kappa_j}), G_s\right\rangle \varphi_\delta(s) \de s 
    \\&=\int_0^T \left\langle \partial_u \Phi(\rho_s^*), G_s\right\rangle \varphi_\delta(s) \de s 
    \\&=\int_0^t \left\langle \partial_u \Phi(\rho_s^*), G_s\right\rangle \de s  +\int_t^{t+\delta} \left\langle \partial_u \Phi(\rho_s^*), G_s\right\rangle \varphi_\delta(s) \de s
\end{align*}
for any $G\in C_c^{1, \infty}([0, T]\times(0, 1))$ and $\delta>0$. By the Cauchy-Schwarz inequality and Lemma \ref{lemma:Phi_L2_H1}, the integrals on $[t, t+\delta]$ above vanish as $\delta\to0$, and hence
\begin{equation*}
    \mylim_{j\to\infty}\int_0^t \left\langle \partial_u \Phi(\rho_s^{\kappa_j}), G_s\right\rangle \de s = \int_0^t \left\langle \partial_u \Phi(\rho_s^*), G_s\right\rangle \de s 
\end{equation*}
for any $G\in C_c^{1, \infty}([0, T]\times(0, 1))$.
By the density of $C_c^{1,\infty}([0,T]\times(0,1))$
in $L^2([0,T]\times(0,1))$, the uniform $L^2$ bound on
$\partial_u\Phi(\rho^{\kappa_j})$ provided by
Lemma~\ref{lemma:Phi_L2_H1}, and the Cauchy--Schwarz
inequality, the convergence above extends to every
$G\in L^2([0,T]\times(0,1))$. In particular, it holds
for every $G\in C^{1,1}([0,T]\times[0,1])$.
For a similar density argument, see
\cite[proof of Proposition~5.5]{FGN14}.
This completes the proof.
\end{proof}

\begin{lemma}\label{lemma:rho*_boundary} Assume that the sequence $\{\rho^{\kappa_j}\}_{j\in\mathbb{N}}$ converges to $\rho^*$ strongly in $L^2([0, T]\times[0, 1])$ as $j\to\infty$, and that $\kappa_j\to\infty$ as $j\to\infty$. Then, $\Phi(\rho^*_t(0))=\frac{\alpha}{\lambda}$ and $\Phi(\rho^*_t(1))=\frac{\beta}{\delta}$ for almost every $t\in(0, T]$.
\end{lemma}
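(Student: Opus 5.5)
The plan is to combine the Robin identities of Theorem \ref{thm:energy_properties} with the uniform energy bound of Theorem \ref{thm:energy_estimate}. This shows that the boundary discrepancies $\lambda\Phi(\rho^{\kappa_j}_t(0))-\alpha$ and $\beta-\delta\Phi(\rho^{\kappa_j}_t(1))$ vanish in $L^1(0,T)$ as $j\to\infty$. Then the boundary values of $\Phi(\rho^{\kappa_j})$ must converge to those of $\Phi(\rho^*)$ in a suitable weak sense. I treat only $u=0$; the case $u=1$ is symmetric.

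\textbf{Step 1: vanishing of the discrepancy.} The energy identity \eqref{eq:energy_id} includes the boundary atoms of the measure $W_{\kappa,\xi}$. For $\xi=\rho^{\kappa}$ it therefore gives
\[
\frac{1}{\kappa\lambda}\int_0^T |f_t(0)|^2\bigl(\Phi(\rho^\kappa_t(0))+\bar\varphi^\kappa(0)\bigr)\de t\le 4cM_0 .
\]
By \eqref{eq:pre_boundary}, $f_t(0)w_t(0)=-\kappa\lambda(\Phi(\rho^\kappa_t(0))-\bar\varphi^\kappa(0))$, where $w=\Phi(\rho^\kappa)+\bar\varphi^\kappa$. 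Moreover $w_t(0)\ge \bar\varphi^\kappa(0)\ge\min\{\alpha/\lambda,\beta/\delta\}>0$ by Remark \ref{remark:barphi_bounds}, and $w\le \Phi(c_0)+\max\{\alpha/\lambda,\beta/\delta\}$ by Proposition \ref{prop:comparison}. Substituting, we obtain
\[
\kappa\lambda\int_0^T \frac{(\Phi(\rho^\kappa_t(0))-\bar\varphi^\kappa(0))^2}{w_t(0)}\de t\le 4cM_0,
\qquad\text{hence}\qquad
\int_0^T(\Phi(\rho^\kappa_t(0))-\bar\varphi^\kappa(0))^2\de t\le \frac{C}{\kappa}.
\]
From \eqref{eq:limit_fug} we have $\bar\varphi^\kappa(0)\to\alpha/\lambda$ as $\kappa\to\infty$. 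It follows that $\Phi(\rho^{\kappa_j}_\cdot(0))\to\alpha/\lambda$ in $L^2(0,T)$.

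\textbf{Step 2: identifying the limit trace.} It remains to show that $\Phi(\rho^{\kappa_j}_\cdot(0))\to\Phi(\rho^*_\cdot(0))$ at least weakly in $L^2(0,T)$, where $\Phi(\rho^*)\in L^2(0,T;\mathcal H^1)$ by Lemma \ref{lemma:Phi*_L2_H1}. For $h\in C_c^\infty(0,T)$ I will use the test function $H_t(u)=h(t)(1-u)$. Sobolev integration by parts on $[0,1]$ gives
\[
\int_0^T h(t)\Phi(\rho^{\kappa_j}_t(0))\de t=\llangle \Phi(\rho^{\kappa_j}),\partial_u H\rrangle\cdot(-1)\cdot(-1)+\llangle \partial_u\Phi(\rho^{\kappa_j}),H\rrangle,
\]
up to sign conventions; concretely, $\int_0^1 \partial_u(\Phi(\rho_t)(1-u))\de u=-\Phi(\rho_t(0))$. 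As $j\to\infty$, the term $\llangle\Phi(\rho^{\kappa_j}),\partial_uH\rrangle$ converges by strong $L^2$ convergence, since $\Phi$ is Lipschitz. The term $\llangle\partial_u\Phi(\rho^{\kappa_j}),H\rrangle$ converges to $\llangle\partial_u\Phi(\rho^*),H\rrangle$ along a subsequence. This uses weak compactness from Lemma \ref{lemma:Phi_L2_H1}, with the weak limit identified as the derivative as in the proof of Lemma \ref{lemma:Phi*_L2_H1}; because the limit is identified, the whole sequence converges. The same integration by parts applied to $\Phi(\rho^*)$ then yields $\int_0^T h(t)\Phi(\rho^{\kappa_j}_t(0))\de t\to\int_0^T h(t)\Phi(\rho^*_t(0))\de t$.

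Combining Steps 1 and 2 gives $\int_0^T h(t)\bigl(\Phi(\rho^*_t(0))-\alpha/\lambda\bigr)\de t=0$ for all $h\in C_c^\infty(0,T)$. The fundamental lemma of the calculus of variations then yields $\Phi(\rho^*_t(0))=\alpha/\lambda$ for a.e.\ $t$.

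The main obstacle is Step 2. The test function $(1-u)$ is not compactly supported, so the weak convergence of $\partial_u\Phi(\rho^{\kappa_j})$ must be known against all of $L^2$ and not only against $C_c^{0,\infty}$ test functions. I would handle this with the density argument of Lemma \ref{lemma:Phi*_weak}, using the uniform $L^2$ bound. A second point needing care is that the boundary values in Theorem \ref{thm:energy_properties} are the Sobolev traces, which is exactly what the integration by parts uses.
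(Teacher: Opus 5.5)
Your argument is correct.

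\textbf{Step~1.} This is the same mechanism as the second half of the paper's proof. The boundary atom of the weighted norm carries a factor $1/(\kappa\lambda)$, and the Robin identity turns the boundary derivative into $\kappa\lambda$ times the discrepancy, which gives an $O(1/\kappa)$ bound. You phrase it through \eqref{eq:pre_boundary} and the atom of $f$. This makes the use of $w_t(0)\ge\min\{\alpha/\lambda,\beta/\delta\}$ explicit, at the price of the extra limit $\bar\varphi^\kappa(0)\to\alpha/\lambda$. The paper instead compares directly with $\alpha/\lambda$ via \eqref{eq:boundary_conditions}.

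\textbf{Step~2.} This is where your route really differs.
\begin{itemize}
\item The paper proves the stronger statement \eqref{eq:limJ_Phi0}: strong convergence of the traces $\Phi(\rho^{\kappa_j}_\cdot(0))\to\Phi(\rho^*_\cdot(0))$ in $L^2(0,T)$. It splits through the boundary averages $\frac1\eps\int_0^\eps$. The trace-versus-average errors are $O(\eps)$ uniformly in $j$, thanks to the uniform bound on $\|\partial_u\Phi(\rho^{\kappa_j})\|_2$. The middle term is $O(\eps^{-1}\|\rho^{\kappa_j}-\rho^*\|_2^2)$. One then lets $j\to\infty$ and afterwards $\eps\to0$.
\item You instead use the exact representation $F(0)=\int_0^1F\,\de u-\int_0^1F'(u)(1-u)\,\de u$ of the trace as a bounded functional on $\mathcal H^1$. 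The correct identity is $\int_0^T h\,\Phi(\rho_t(0))\,\de t=-\llangle\Phi(\rho),\partial_uH\rrangle-\llangle\partial_u\Phi(\rho),H\rrangle$; your displayed signs are off, but this is immaterial. You then combine strong convergence of $\Phi(\rho^{\kappa_j})$ with weak convergence of $\partial_u\Phi(\rho^{\kappa_j})$.
\end{itemize}
Your approach only yields convergence of the traces tested against $h(t)$. That suffices, because Step~1 already gives strong convergence to the constant $\alpha/\lambda$, so you only need to identify the limit. Your route is shorter and avoids the $\eps$-bookkeeping. The paper's route gives more information (strong convergence of boundary traces) than the lemma needs.

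\textbf{The obstacle you flag.} The ``main obstacle'' is not really one. Since $C_c^{0,\infty}([0,T]\times(0,1))$ is dense in $L^2([0,T]\times[0,1])$, every weak subsequential limit of $\partial_u\Phi(\rho^{\kappa_j})$ equals $\partial_u\Phi(\rho^*)$. Hence the whole sequence converges weakly in $L^2([0,T]\times[0,1])$. By definition, that is convergence against every $L^2$ function, in particular against $h(t)(1-u)$, so no further density argument is required.
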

\begin{proof} We will only show that $\Phi(\rho^*_t(0))=\frac{\alpha}{\lambda}$, as the proof of the second equality is completely analogous. By the inequality $(x+y+z)^2\le 3(x^2+y^2+z^2)$, we see that
\begin{align}
    \int_0^T\big\{\Phi(\rho^{\kappa_j}_t(0))-&\Phi(\rho^*_t(0))\big\}^2 \de t\le 3 \int_0^T \left\{\Phi(\rho^{\kappa_j}_t(0))-\frac{1}{\eps}\int_0^\eps \Phi(\rho^{\kappa_j}_t(u))\de u\right\}^2 \de t\label{eq:3_a}
    \\&\phantom{\le} +3 \int_0^T \left\{\frac{1}{\eps}\int_0^\eps \Phi(\rho^{\kappa_j}_t(u))\de u - \frac{1}{\eps}\int_0^\eps \Phi(\rho^*_t(u))\de u\right\}^2 \de t\label{eq:3_b}
    \\&\phantom{\le} +3 \int_0^T \left\{\Phi(\rho^*_t(0)) - \frac{1}{\eps}\int_0^\eps \Phi(\rho^*_t(u))\de u \right\}^2 \de t\label{eq:3_c}
\end{align}
for any $j\in\mathbb{N}$ and any $\eps>0$. 

Now, by \eqref{eq:partialPhi_bdd}, we have that
\begin{equation}\label{eq:partialPhi_M0}
    \llangle \partial_u \Phi(\rho^{\kappa_j}), \partial_u \Phi(\rho^{\kappa_j})\rrangle \le C
\end{equation}
for $C<\infty$ independent of $j$. But then, since
\begin{align*}
    \left|\Phi(\rho^{\kappa_j}_t(0))-\frac{1}{\eps}\int_0^\eps \Phi(\rho^{\kappa_j}_t(u))\de u\right| & = \left|\frac{1}{\eps}\int_0^\eps\int_0^u \partial_v \Phi(\rho^{\kappa_j}_t(v))\de v\de u\right|
    \\& = \left|\frac{1}{\eps}\int_0^\eps \partial_u \Phi(\rho^{\kappa_j}_t(u))(\eps-u)\de u\right|
    \\& \le \sqrt{\frac{\eps}{3}}\llangle \partial_u \Phi(\rho^{\kappa_j}), \partial_u \Phi(\rho^{\kappa_j})\rrangle^{\frac{1}{2}},
\end{align*}
we obtain
\begin{equation*}
    \eqref{eq:3_a}\le \eps \llangle \partial_u \Phi(\rho^{\kappa_j}), \partial_u \Phi(\rho^{\kappa_j})\rrangle \le \eps C.
\end{equation*}
Defining $\partial_u\Phi(\rho^*)$ as above equation \eqref{eq:limJ_partialPhi_2}, taking the limit as $j\to\infty$ in \eqref{eq:partialPhi_M0} and applying the exact same argument as above, we also get
\begin{equation*}
    \eqref{eq:3_c}\le \eps \llangle \partial_u \Phi(\rho^*), \partial_u \Phi(\rho^*)\rrangle \le\eps C.
\end{equation*}
Finally, recalling \eqref{eq:Phi_g*} and by the Cauchy-Schwarz inequality, we have that
\begin{equation*}
    \eqref{eq:3_b}\le \frac{3 (g^*)^2}{\eps} \llangle \rho^{\kappa_j}-\rho^*, \rho^{\kappa_j}-\rho^*\rrangle.
\end{equation*}

Combining everything together, we get
\begin{equation*}
    \int_0^T\big\{\Phi(\rho^{\kappa_j}_t(0))-\Phi(\rho^*_t(0))\big\}^2\le 8c\, \Phi(c_0) M_0\,\eps+\frac{3 (g^*)^2}{\eps} \llangle \rho^{\kappa_j}-\rho^*, \rho^{\kappa_j}-\rho^*\rrangle.
\end{equation*}
Sending $j\to\infty$ and then $\eps\to0$, we have thus proved that
\begin{equation}\label{eq:limJ_Phi0}
    \mylim_{j\to\infty}\int_0^T\big\{\Phi(\rho^{\kappa_j}_t(0))-\Phi(\rho^*_t(0))\big\}^2\de t=0.
\end{equation}
Now, using \eqref{eq:boundary_conditions}, recalling the definition of $\llangle \cdot, \cdot \rrangle_\kappa$ in Section \ref{subsec:main_results}, and by the same argument that led to \eqref{eq:partialPhi_bdd}, we see that
\begin{align*}
    \int_0^T\left\{\Phi(\rho^{\kappa_j}_t(0))-\frac{\alpha}{\lambda}\right\}^2\de t &=  \int_0^T\left\{\frac{\partial_u \Phi(\rho_t^{\kappa_j}(0))} {\kappa_j\lambda}\right\}^2 \de t 
    \\&\le \frac{1}{\kappa_j\lambda} \llangle \partial_u \Phi(\rho^{\kappa_j}), \partial_u \Phi(\rho^{\kappa_j})\rrangle_{\kappa_j}
    \\&\le \frac{C}{\kappa_j \lambda},
\end{align*}
where $C<\infty$ is independent of $j$. Since $\kappa_j\to\infty$ as $j\to\infty$, this yields
\begin{equation*}
    \mylim_{j\to\infty}\int_0^T\left\{\Phi(\rho^{\kappa_j}_t(0))-\frac{\alpha}{\lambda}\right\}^2\de t=0.
\end{equation*}
Combining this with \eqref{eq:limJ_Phi0} and using the inequality $(x+y)^2\le 2(x^2+y^2)$, we have thus proved that
\begin{equation*}
    \int_0^T\left\{\Phi(\rho^*_t(0))-\frac{\alpha}{\lambda}\right\}^2\de t=0,
\end{equation*}
which implies that $\Phi(\rho^*_t(0))=\frac{\alpha}{\lambda}$ for almost every $t\in(0, T]$, as required.
\end{proof}

\begin{lemma}\label{lemma:rho*_weak} Assume that the sequence $\{\rho^{\kappa_j}\}_{j\in\mathbb{N}}$ converges to $\rho^*$ strongly in $L^2([0, T]\times[0, 1])$ as $j\to\infty$. Then, there exists a subsequence $\{\rho^{\kappa_{j_l}}\}_{l\in\mathbb{N}}$ such that
\begin{equation*}
    \mylim_{l\to\infty}\left\langle \rho_t^{\kappa_{j_l}}, H_t\right\rangle = \left\langle \rho_t^*, H_t\right\rangle
\end{equation*}
for all $t\in[0, T]$ and every $H\in C^{0, 2}([0, T]\times[0, 1])$.
\end{lemma}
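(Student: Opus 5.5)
We will obtain pointwise-in-time convergence from the uniform time regularity of the family, an Arzelà--Ascoli type argument in a weak topology, and identification of the limit through the strong $L^2$ convergence. Fix a countable set $\{H^m\}_{m\in\mathbb N}$ of functions in $C^2([0,1])$ with $H^m(0)=H^m(1)=0$ that is dense in $\mathcal H^1_0$ (for instance, sines). For each such $H^m$ we will show that the maps $t\mapsto\langle\rho^{\kappa_j}_t,H^m\rangle$ are uniformly bounded and equicontinuous. Uniform boundedness follows from the uniform bound $0\le\rho^\kappa\le c_0$ of Proposition \ref{prop:comparison}. For equicontinuity, we test the weak formulation \eqref{eq:integral_zre_rob} (in the integrated form \eqref{eq:integral_ibp_1}) with the time-independent $G=H^m$. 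Since $H^m$ vanishes at the boundary, the $\kappa$-dependent boundary terms drop out, and we get
\[
\bigl|\langle\rho^{\kappa_j}_t-\rho^{\kappa_j}_r,H^m\rangle\bigr|\le \int_r^t|\langle\partial_u\Phi(\rho^{\kappa_j}_s),\partial_uH^m\rangle|\,\de s\le \sqrt{|t-r|}\,\|\partial_u H^m\|_\infty\,\sup_\kappa\|\partial_u\Phi(\rho^\kappa)\|_2 .
\]
The last factor is finite by Lemma \ref{lemma:Phi_L2_H1}, so the bound is uniform in $j$. (Alternatively, we can cite the Hölder continuity from the appendix.)

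By Arzelà--Ascoli and a diagonal argument over $m$, we extract a subsequence $\kappa_{j_l}$ along which $\langle\rho^{\kappa_{j_l}}_t,H^m\rangle$ converges uniformly in $t$ to some continuous $a_m(t)$, for every $m$. To identify $a_m(t)=\langle\rho^*_t,H^m\rangle$, we use the strong $L^2([0,T]\times[0,1])$ convergence. It gives $\int_0^T|\langle\rho^{\kappa_j}_t-\rho^*_t,H^m\rangle|\de t\to0$, so $a_m(t)=\langle\rho^*_t,H^m\rangle$ for a.e.\ $t$. We then choose the representative of $\rho^*$ in time for which $t\mapsto\rho^*_t$ is weakly continuous; this representative exists because the limit $a_m$ is continuous and $\rho^*_t$ is bounded by $c_0$. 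With this choice the identity holds for every $t$.

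Next we extend to a general $H\in C^{0,2}$. For fixed $t$, the functions $\rho^{\kappa_{j_l}}_t$ are bounded in $L^\infty$, hence in $L^2(0,1)$, uniformly. Since $\{H^m\}$ is dense in $L^2(0,1)$ (functions vanishing at the endpoints are still $L^2$-dense), an $\varepsilon/3$ argument gives $\langle\rho^{\kappa_{j_l}}_t,h\rangle\to\langle\rho^*_t,h\rangle$ for every $h\in L^2(0,1)$, and in particular for $h=H_t$.

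The main obstacle is the boundary issue. Test functions that do not vanish at $0,1$ would produce the boundary terms $\kappa(\lambda\Phi(\rho(0))-\alpha)$, which are not uniformly controlled as $\kappa\to\infty$. Working only with $H^m\in\mathcal H^1_0$ and then using $L^2$-density together with the $L^\infty$ bound avoids this difficulty. A secondary point is the choice of time representative of $\rho^*$, which is needed for the statement to hold for every $t$ rather than almost every $t$.
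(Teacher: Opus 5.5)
Your proof is correct and follows essentially the same route as the paper, which defers to \cite[Proposition 3.5]{dgn21}. That route applies Arzel\`a--Ascoli to $t\mapsto\langle\rho^{\kappa_j}_t,H\rangle$, takes the uniform bound from the $L^\infty$ estimate of Proposition \ref{prop:comparison} (the point the paper stresses) and equicontinuity from the $\frac12$-H\"older estimate of Proposition \ref{prop:time_reg}, and then identifies the limit through the strong $L^2$ convergence. Three further details, which the paper leaves to the reference, you handle correctly: restricting to time-independent test functions vanishing at $0$ and $1$ (which removes the case distinction $\kappa\le1$ versus $\kappa>1$ in Proposition \ref{prop:time_reg}), extending to arbitrary $H_t$ by $L^2$-density at fixed $t$, and choosing a weakly continuous time representative of $\rho^*$ so the identity holds for every $t$.
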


\begin{proof} The proof closely follows that of \cite[Proposition 3.5]{dgn21}, so we refer to the argument therein. We highlight that this argument \textit{does} require the boundedness of $\{\rho^\kappa\}_\kappa$ in $L^\infty([0, T]\times[0, 1])$: indeed, calling $\mathcal{F}_\kappa(t, H_t):=\langle \rho_t^\kappa, H_t\rangle$ for a fixed $H\in C^{0, 2}([0, T]\times[0,1])$, the boundedness of $\{\rho^\kappa\}_\kappa$ is $L^p([0, T]\times[0, 1])$ is not enough to guarantee a uniform bound for $|\mathcal{F}_\kappa(t, H_t)|$, which is necessary to apply the Arzelà-Ascoli theorem.\end{proof}

\appendix\section{Weak Solutions of the Nonlinear PDE}\label{sec:appendix}
In this section, we prove two fundamental properties of our nonlinear PDE. We start by proving uniqueness of weak solutions with all three type of boundary conditions.
\begin{proposition}[Uniqueness of Weak Solutions]\label{prop:uniqueness_zre} There exists a unique weak solution to \eqref{eq:zre_robin}, \eqref{eq:zre_neumann} and \eqref{eq:zre_dirichlet} in the sense of Definitions \ref{def:weak_robin}, \ref{def:weak_neumann} and \ref{def:weak_dirichlet}, respectively.
\end{proposition}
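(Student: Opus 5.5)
Existence and uniqueness are handled separately.

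\emph{Existence.} For the Robin problem with fixed $\kappa>0$, existence comes from Proposition~\ref{corol:hydro}. The limit point $Q^*_\kappa$ of $\{Q^N_\kappa\}_N$ is concentrated on paths $\rho^\kappa_t(u)\,\de u$ satisfying items i)--iii) of Definition~\ref{def:weak_robin}. For initial data as in \eqref{eq:initial_condition}, the hypotheses \eqref{eq:initial_assumptions} are met by Remark~\ref{remark:initial condition}. For the Neumann and Dirichlet problems, existence then follows by passing to the limit $\kappa\to0$ or $\kappa\to\infty$, exactly as in Section~\ref{sec:convergence}. That argument does not use uniqueness of the limit problems to build a solution; it only uses it to identify the limit. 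Hence the real content of the proposition is uniqueness, and I would prove that by an $\mathcal H^{-1}$-type (Oleinik/Gajewski) energy argument adapted to each boundary condition.

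\emph{Uniqueness.} Let $\rho^1,\rho^2$ be two weak solutions with the same $\gamma$, and set $w=\rho^1-\rho^2$ and $\Psi=\Phi(\rho^1)-\Phi(\rho^2)$. Subtracting the integral formulations gives, for each admissible $G$,
\[
\langle w_t,G_t\rangle=\int_0^t\langle w_s,\partial_sG_s\rangle\,\de s-\int_0^t\langle\partial_u\Psi_s,\partial_uG_s\rangle\,\de s-\kappa\int_0^t\bigl\{\delta\Psi_s(1)G_s(1)+\lambda\Psi_s(0)G_s(0)\bigr\}\de s,
\]
using integration by parts, which is legitimate since $\Phi(\rho^i)\in L^2(0,T;\mathcal H^1)$. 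The Dirichlet case has no boundary term, and there $\Psi_s(0)=\Psi_s(1)=0$. The Neumann case is $\kappa=0$.

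The key choice is the test function. For fixed $s$, I would take $G_s=\int_s^{t}\Psi_r\,\de r$, suitably mollified so that it is regular enough to be admissible. In the Dirichlet case this $G$ vanishes at the boundary, since the traces of $\Psi$ do. With this choice $\partial_sG_s=-\Psi_s$. The identity then becomes
\[
\int_0^t\langle w_s,\Psi_s\rangle\,\de s+\tfrac12\Bigl\|\int_0^t\partial_u\Psi_r\,\de r\Bigr\|_2^2+\tfrac{\kappa\delta}{2}\Bigl(\int_0^t\Psi_r(1)\,\de r\Bigr)^2+\tfrac{\kappa\lambda}{2}\Bigl(\int_0^t\Psi_r(0)\,\de r\Bigr)^2=0,
\]
and every term is nonnegative. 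Since $\Phi$ is strictly increasing, $w\Psi\ge0$, with equality only where $w=0$, so $\rho^1=\rho^2$ a.e.

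The boundary terms close into perfect squares because $\frac{d}{ds}\bigl(\int_s^t\Psi_r\,\de r\bigr)^2=-2\Psi_s\int_s^t\Psi_r\,\de r$. The same computation handles the gradient term. The signs line up because the Robin coefficients are positive, and this is what makes a single argument cover all three boundary conditions.

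\emph{Main obstacle.} The hardest step is justifying this test function. Definitions~\ref{def:weak_robin}--\ref{def:weak_dirichlet} only allow $G\in C^{1,2}$, whereas $\int_s^t\Psi_r\,\de r$ lies only in $W^{1,2}(0,T;L^2)\cap L^2(0,T;\mathcal H^1)$, with traces in $L^2(0,T)$. I would first extend the weak formulation, by density, to test functions $G$ with $G,\partial_sG\in L^2$ and $G\in L^2(0,T;\mathcal H^1)$ (vanishing traces for Dirichlet). Every term in the identity above is continuous in these norms. The boundary terms are controlled by the trace bound $\|G_s\|_\infty\le \|G_s\|_{\mathcal H^1}$ together with $\Psi(i)\in L^2(0,T)$. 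Here the already-integrated-by-parts form is essential: in it only $\partial_uG$ appears, not $\Delta G$. I would approximate $G$ by mollifying in time and space, using cosine or sine expansions in space to preserve the boundary type, and then pass to the limit in each term.
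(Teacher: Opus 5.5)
\textbf{Uniqueness.} Your argument is correct, and in the Dirichlet case it is exactly the paper's proof. Following \cite{bdgn20}, the paper subtracts the two formulations and integrates by parts using $\Psi_s(0)=\Psi_s(1)=0$. It then extends the identity by smooth approximation to test functions $G\in L^2(0,T;\mathcal H^1)$ with $\partial_tG\in L^2$, and tests with $\theta_t=\int_t^T\Psi_s\,\de s$. It works at the final time $T$ rather than at a general $t$, which is immaterial.

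The difference is in the Robin and Neumann cases. The paper does not write these out; it refers to \cite{agnr25} (Lemma 2.10). You run the same test function through them. Because $\kappa\lambda,\kappa\delta>0$, the Robin boundary contributions become the nonnegative squares $\frac{\kappa\lambda}{2}\bigl(\int_0^t\Psi_r(0)\,\de r\bigr)^2$ and $\frac{\kappa\delta}{2}\bigl(\int_0^t\Psi_r(1)\,\de r\bigr)^2$. The signs do work out: subtracting the two Robin formulations produces $+\kappa\int_0^t\{\delta\Psi_s(1)G_s(1)+\lambda\Psi_s(0)G_s(0)\}\,\de s$ on the same side as $\langle w_t,G_t\rangle$, as you wrote. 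Your route gives a self-contained proof that treats all three boundary conditions uniformly. Its only technical input is the density extension of the integrated-by-parts formulation. Your cosine truncations for Robin/Neumann and sine truncations for Dirichlet, combined with time mollification, carry that out, since both expansions converge in $\mathcal H^1$ in the respective cases. The paper's version is shorter on the page but not self-contained.

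\textbf{Two minor remarks.}
\begin{enumerate}
\item The trace bound needs a constant: $\|G_s\|_\infty\le\sqrt2\,\|G_s\|_{\mathcal H^1}$. The version with constant $1$ fails, for example for $\cosh$ on $(0,1)$. This is harmless for your argument.
\item Your existence part goes beyond the paper's proof, which only establishes uniqueness. However, it only covers initial profiles satisfying \eqref{eq:initial_condition}, because that is what the hydrodynamic limit and the uniform energy estimate used in Section~\ref{sec:convergence} require. If the statement is read as asserting existence for an arbitrary integrable $\gamma$, neither your argument nor the paper's establishes that.
\end{enumerate}
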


\begin{proof} The proof of uniqueness of weak solutions to \eqref{eq:zre_robin} and \eqref{eq:zre_neumann} follows from \cite[Lemma 2.10]{agnr25}, so we only need to consider the Dirichlet case. We follow \cite[Section 7.1]{bdgn20}. To that end, assume that $\rho_t^1$ and $\rho_t^2$ are both weak solutions to \eqref{eq:zre_dirichlet} (with the same initial condition $\gamma$), and let
\begin{equation*}
    \omega_t(u) := \rho^1_t(u) - \rho^2_t(u).
\end{equation*}
Subtracting the integral formulation satisfied by 
$\rho^1$ and $\rho^2$ and performing an integration by parts, we obtain that, for all 
$G\in C_0^{1,2}([0,T]\times[0,1])$,
\begin{equation}\label{eq:weak_difference}
    \langle w_T, G_T\rangle +\int_0^T \langle \partial_u \Phi(\rho_s^1)-\partial_u\Phi(\rho_s^2), \partial_u G_s\rangle \de s - \int_0^T \langle w_s, \partial_s G_s\rangle \de s = 0.
\end{equation}
Note that, by a standard smooth approximation procedure, \eqref{eq:weak_difference} is well defined even for functions $G:[0, T]\times[0, 1]\to\mathbb{R}$ such that $G\in {{L^2(0,T; \mathcal{H}^1)}}$ and $\partial_t G\in L^2([0, T]\times[0, 1])$. 
Define now the map $\theta:[0, T]\times[0, 1]\to\mathbb{R}$ via
\begin{equation}
    \theta_t(u):= 
    \int_t^T\big\{\Phi(\rho^1_s(u))-\Phi(\rho^2_s(u))\big\}\de s.
\end{equation}
Since both solutions satisfy $\Phi(\rho^1_t(0))=\Phi(\rho^2_t(0))=\frac{\alpha}{\lambda}$ and $
\Phi(\rho^1_t(1))=\Phi(\rho^2_t(1))=\frac{\beta}{\delta}$ 
it follows that $\theta_t(0)=\theta_t(1)=0$ for almost every $t\in [0, T]$. Moreover, since $\Phi(\rho)\in {{L^2(0,T; \mathcal{H}^1)}}$, we have that
\begin{align*}
    &\partial_t \theta_t(u) = 
    \Phi(\rho^2_s(u))-\Phi(\rho^1_s(u)) \in L^2([0, T]\times[0, 1]),
    \\&\partial_u\theta_t(u) = \int_t^T \big\{\partial_u \Phi(\rho_s^1(u))-\partial_u\Phi(\rho_s^2(u))\big\}\de s \in L^2([0, T]\times[0, 1]).
\end{align*}
Hence, since $\theta_T(u)=0$ for all $u\in[0, 1]$, using $\theta$ as a test function in \eqref{eq:weak_difference} yields
\begin{equation*}\label{eq:weak_diff_theta}
    \int_0^T \langle \partial_u \Phi(\rho_s^1)-\partial_u\Phi(\rho_s^2), \partial_u \theta_s\rangle \de s - \int_0^T \langle w_s, \partial_s \theta_s\rangle \de s = 0,
\end{equation*}
which we can rewrite as
\begin{equation*}
    \begin{split}
    \int_0^1\int_0^T&\Bigg\{\omega_t(u) \big(\Phi(\rho^1_s(u))-\Phi(\rho^2_s(u))\big)
    \\&+\big(\partial_u \Phi(\rho_s^1)-\partial_u\Phi(\rho_s^2)\big)\int_t^T \big\{\partial_u \Phi(\rho_s^1(u))-\partial_u\Phi(\rho_s^2(u))\big\}\de s\Bigg\}\de t\de u = 0,
    \end{split}
\end{equation*}
that is,
\begin{equation*}
    \int_0^T\langle \omega_t, \Phi(\rho^1_t)-\Phi(\rho^2_t)\rangle\de t + \frac{1}{2}\int_0^1\left\{\int_0^T \big(\partial_u \Phi(\rho_s^1)-\partial_u\Phi(\rho_s^2)\big)\right\}^2\de u = 0. 
\end{equation*}
Since $\Phi$ is strictly increasing, this implies that $\omega_t(u)=0$ almost everywhere on $[0, T]\times[0, 1]$.
\end{proof}

The next proposition states that any weak solution to \eqref{eq:zre_robin} is such that, for any fixed test function $H$ in an appropriate space, the map $[0, T]\ni t\mapsto \langle \rho_t^\kappa, H_t\rangle$ is $\frac{1}{2}$-Hölder continuous in time, which allows us to prove Lemma \ref{lemma:rho*_weak}. The statement is reminiscent of \cite[Proposition 3.4]{dgn21}; however, our proof only requires the densities $\{\rho^\kappa\}_{\kappa>0}$ to be bounded in $L^2([0, T]\times[0, 1])$ and \textit{not} in $L^\infty([0, T]\times[0, 1])$, so we will need some additional arguments to prove the result.
\begin{proposition}\label{prop:time_reg} 
For every
\begin{enumerate}[i)]
    \item $H\in C^{1, 2}([0, T]\times [0, 1])$ if $0<\kappa\le1$, and
    \item $H\in C_0^{1, 2}([0, T]\times [0, 1])$ if $\kappa>1$,
\end{enumerate}
there exists a constant $C$ independent of $\kappa$ such that 
\begin{equation*}
    \left|\langle \rho_t^\kappa, H_t\rangle - \langle \rho_s^\kappa, H_s\rangle\right| \le C|t-s|^\frac{1}{2}
\end{equation*}
for all $s, t\in [0, T]$.
\end{proposition}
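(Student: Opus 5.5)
We will work directly from the weak formulation \eqref{eq:integral_zre_rob}. We subtract the identity at time $s$ from the identity at time $t$ (say $s<t$) and obtain
\begin{equation*}
\begin{split}
\langle\rho^\kappa_t,H_t\rangle-\langle\rho^\kappa_s,H_s\rangle
=\;&\int_s^t\big\{\langle\rho^\kappa_r,\partial_rH_r\rangle+\langle\Phi(\rho^\kappa_r),\Delta H_r\rangle\big\}\de r\\
&-\int_s^t\big\{\Phi(\rho^\kappa_r(1))\partial_uH_r(1)-\Phi(\rho^\kappa_r(0))\partial_uH_r(0)\big\}\de r\\
&+\kappa\int_s^t\big\{(\beta-\delta\Phi(\rho^\kappa_r(1)))H_r(1)-(\lambda\Phi(\rho^\kappa_r(0))-\alpha)H_r(0)\big\}\de r .
\end{split}
\end{equation*}
Every integrand is a function of $r$ alone. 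The plan is therefore to bound each of them in $L^2(0,T)$ uniformly in $\kappa$ and then apply Cauchy--Schwarz in time, which gives $\int_s^t|a(r)|\de r\le |t-s|^{1/2}\|a\|_{L^2(0,T)}$. This is where the exponent $\tfrac12$ comes from.

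\textbf{Bulk terms.} We have $|\langle\rho^\kappa_r,\partial_rH_r\rangle|\le\|\partial_rH\|_\infty\|\rho^\kappa_r\|_2$. By \eqref{eq:Phi_g*}, $|\langle\Phi(\rho^\kappa_r),\Delta H_r\rangle|\le\|\Delta H\|_\infty(\Phi(0)+g^*\|\rho^\kappa_r\|_2)$. By Lemma~\ref{lemma:rho_Lp} with $\ell=2$, both integrands have $L^2(0,T)$ norm bounded independently of $\kappa$. This uses only the $L^2$ bound on $\rho^\kappa$, as the remark before the statement announces.

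\textbf{Boundary values.} The traces are controlled by the one-dimensional Sobolev inequality $\|f\|_\infty\le\|f\|_2+\|\partial_uf\|_2$, used already in the proof of Theorem~\ref{thm:convergence}. It gives $|\Phi(\rho^\kappa_r(i))|\le\|\Phi(\rho^\kappa_r)\|_{\mathcal H^1}$, so $\int_0^T\Phi(\rho^\kappa_r(i))^2\de r\le\sup_\kappa\|\Phi(\rho^\kappa)\|^2_{L^2(0,T;\mathcal H^1)}<\infty$ by Lemma~\ref{lemma:Phi_L2_H1}. This handles the second line for every $\kappa$. If one wants to avoid the $L^\infty$ input inside Lemma~\ref{lemma:Phi_L2_H1}, one would instead combine \eqref{eq:energy_id}, Theorem~\ref{thm:energy_estimate} and the $L^p$ bounds of Lemma~\ref{lemma:rho_Lp} via H\"older; I would first try simply citing Lemma~\ref{lemma:Phi_L2_H1}.

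\textbf{The $\kappa$-dependent term (main obstacle).} In case ii), $\kappa>1$ and $H\in C^{1,2}_0$, so $H_r(0)=H_r(1)=0$ and the third line vanishes. In case i), $0<\kappa\le1$, the prefactor $\kappa$ is at most $1$, and the integrand is bounded by $\|H\|_\infty\big(\alpha+\beta+\lambda|\Phi(\rho^\kappa_r(0))|+\delta|\Phi(\rho^\kappa_r(1))|\big)$, whose $L^2(0,T)$ norm is uniformly bounded by the previous step. The delicate point is that no bound on this term can be uniform for large $\kappa$ with general $H$: the Robin flux $\kappa(\lambda\Phi(\rho^\kappa(0))-\alpha)$ is only controlled after division by $\kappa$, through \eqref{eq:boundary_conditions} and \eqref{eq:energy_id}. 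This is exactly why the test class is restricted to vanish at the boundary when $\kappa>1$.

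Summing the three contributions gives $|\langle\rho^\kappa_t,H_t\rangle-\langle\rho^\kappa_s,H_s\rangle|\le C|t-s|^{1/2}$. Here $C$ depends only on $H$, $T$, the reservoir parameters, $g^*$, $\Phi(0)$ and the uniform bounds above, and not on $\kappa$.
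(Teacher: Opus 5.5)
Your argument is correct and is essentially the paper's proof. You subtract the weak formulations at times $t$ and $s$ and apply Cauchy--Schwarz in time with the uniform bounds of Lemmas~\ref{lemma:rho_Lp} and~\ref{lemma:Phi_L2_H1}. In case i) you control the $\kappa$-term through $\kappa\le1$ and $\|f\|_\infty\le\|f\|_2+\|\partial_uf\|_2$, and in case ii) you observe that it vanishes. The only difference is cosmetic: the paper first integrates by parts in space, so that $\langle\Phi(\rho^\kappa_r),\Delta H_r\rangle$ and the terms $\Phi(\rho^\kappa_r(i))\partial_uH_r(i)$ merge into $\langle\partial_u\Phi(\rho^\kappa_r),\partial_uH_r\rangle$, whereas you bound them separately.

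Two small remarks. First, that inequality gives $|\Phi(\rho^\kappa_r(i))|\le\sqrt2\,\|\Phi(\rho^\kappa_r)\|_{\mathcal H^1}$ rather than the bound without $\sqrt2$; the constant is harmless. Second, your suggested H\"older route avoiding the $L^\infty$ bound would only control $\partial_u\Phi(\rho^\kappa)$ in $L^q$ for $q<2$, and Theorem~\ref{thm:energy_properties} already presupposes $\xi\in\mathscr B$, so citing Lemma~\ref{lemma:Phi_L2_H1}, as you and the paper do, is the right choice.
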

\begin{proof} i) Assume $0<\kappa\le1$ and let $H\in C^{1, 2}([0, T]\times [0, 1])$. By performing an integration by parts, we can rewrite \eqref{eq:integral_zre_rob} as
\begin{equation} \label{eq:integral_ibp}
    \begin{split}
    \langle\rho^\kappa_t, H_t\rangle &- \langle\gamma, H_0\rangle - \int_0^t\langle \rho^\kappa_s, \partial_s H_s\rangle \de s +\int_0^t \langle\partial_u \Phi(\rho^\kappa_s), \partial_u H_s\rangle\de s
    \\&-\kappa\int_0^t \big\{(\beta-\delta\Phi(\rho^\kappa_s(1)))H_s(1) - (\lambda \Phi(\rho^\kappa_s(0))-\alpha)H_s(0) \big\}\de s = 0.
    \end{split}
\end{equation}
Subtracting \eqref{eq:integral_ibp} from the same equation evaluated at a different time $s\in[0, T]$, we immediately obtain
\begin{equation}\label{eq:time_diff_bound}
    \begin{split}
       |\langle\rho^\kappa_t, H_t\rangle &-\langle\rho^\kappa_s, H_s\rangle|\le \left|\int_s^t\langle \rho^\kappa_r, \partial_r H_r\rangle \de r\right|+\left|\int_s^t \langle\partial_u \Phi(\rho^\kappa_r), \partial_u H_r\rangle\de r\right|
       \\&+\kappa \left| \int_s^t \big\{(\beta-\delta\Phi(\rho^\kappa_r(1)))H_r(1) - (\lambda \Phi(\rho^\kappa_r(0))-\alpha)H_r(0) \big\}\de r\right|.
    \end{split}
\end{equation}
Now, by the Cauchy-Schwarz inequality,
\begin{equation*}
    \left|\int_s^t\langle \rho^\kappa_r, \partial_r H_r\rangle \de r\right|\le \|\rho^\kappa\|_{L^2([0, T]\times[0, 1])} \|\partial_t H\|_\infty \sqrt{|t-s|}.
\end{equation*}
and similarly,
\begin{equation*}
    \left|\int_s^t \langle\partial_u \Phi(\rho^\kappa_r), \partial_u H_r\rangle\de r\right|\le \|\partial_u\Phi(\rho^\kappa)\|_{L^2([0, T]\times[0, 1])} \|\partial_u H\|_\infty \sqrt{|t-s|}.
\end{equation*}
To treat the last line of \eqref{eq:time_diff_bound}, we apply a similar argument to the one right below equation \eqref{eq:integral_ibp_1}: by the standard inequality $\|f\|_{L^\infty([0, 1])}\le \|f\|_{L^2([0, 1])}+\|\partial_u f\|_{L^2([0, 1])}$ applied to $f=\Phi(\rho^\kappa_r)$, since $0<\kappa\le 1$ and by the Cauchy-Schwarz inequality, we see that the last term on the right-hand side of \eqref{eq:time_diff_bound} is bounded from above by
\begin{align*}
    \kappa &\left| \int_s^t \big\{(\beta-\delta\Phi(\rho^\kappa_r(1)))H_r(1) - (\lambda \Phi(\rho^\kappa_r(0))-\alpha)H_r(0) \big\}\de r\right|
    \\&\le (\alpha+\beta) \|H\|_\infty |t-s|
    \\&\phantom{\le}+(\lambda+\delta)\|H\|_\infty \left|\int_s^t \left\{\|\Phi(\rho^\kappa_r)\|_{L^2([0, 1])}+\|\partial_u\Phi(\rho^\kappa_r)\|_{L^2([0, 1])}\right\}\de r\right|
    \\&\le (\alpha+\beta) \|H\|_\infty \sqrt{2T}\sqrt{|t-s|}+(\lambda+\delta)\|H\|_\infty \sqrt{|t-s|} \|\Phi(\rho^\kappa)\|_{{{L^2(0,T; \mathcal{H}^1)}}}.
\end{align*}
But now, by Lemmas \ref{lemma:rho_Lp} and \ref{lemma:Phi_L2_H1}, the coefficients 
\begin{equation*}
    \|\rho^\kappa\|_{L^2([0, T]\times[0, 1])}, \quad \|\partial_u\Phi(\rho^\kappa)\|_{L^2([0, T]\times[0, 1])} \quad \text{and} \quad \|\Phi(\rho^\kappa)\|_{{{L^2(0,T; \mathcal{H}^1)}}}
\end{equation*} can all be bounded from above by a constant independent of $\kappa$, which yields the required bound.

\vspace{1em}
ii) Assume now $\kappa>1$ and let $H\in C_0^{1, 2}([0, T]\times [0, 1])$. Since $H_s(0)=H_s(1)=0$ for all $s$, the last line of \eqref{eq:integral_ibp} vanishes. But then, we can run the same argument as above to estimate the first line of \eqref{eq:time_diff_bound}. This completes the proof.
\end{proof}

Finally, the proposition that follows guarantees that, if the initial condition $\gamma$ of \eqref{eq:zre_robin} is bounded, then the family $\{\rho_\kappa\}_\kappa$ is bounded in $L^\infty([0, T]\times[0, 1])$.
\begin{proposition}[Comparison Principle]\label{prop:comparison} Assume that $\gamma(u)\le c_\gamma<\infty$ for any $u\in [0, 1]$, and let $\rho^\kappa_t(u)$ denote the unique weak solution to \eqref{eq:zre_robin}. Define
\begin{equation*}
    c_0:=\mymax \left\{c_\gamma, \Phi^{-1}\left(\frac{\alpha}{\lambda}\right), \Phi^{-1}\left(\frac{\beta}{\delta}\right)\right\}.
\end{equation*}
Then, for each $\kappa>0$, $\rho^\kappa\le c_0$ almost everywhere on $[0, T]\times[0, 1]$. In particular, the family $\{\rho^\kappa\}_\kappa$ is bounded in $L^\infty([0, T]\times[0, 1])$.
\end{proposition}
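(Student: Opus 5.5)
The plan is a Stampacchia-type truncation: test the weak formulation of \eqref{eq:zre_robin} against the positive part $(\Phi(\rho^\kappa)-\Phi(c_0))^+$. Set $k:=\Phi(c_0)$ and $v_t:=(\Phi(\rho^\kappa_t)-k)^+$. Since $\Phi(\rho^\kappa)\in L^2(0,T;\mathcal H^1)$, $v\in L^2(0,T;\mathcal H^1)$ with $\partial_u v=\mathbf 1_{\{\Phi(\rho^\kappa)>k\}}\partial_u\Phi(\rho^\kappa)$. By construction $k\ge\alpha/\lambda$ and $k\ge\beta/\delta$, and, since $\Phi$ is strictly increasing, $v_0=(\Phi(\gamma)-k)^+=0$.

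First I rewrite the weak formulation in the integrated-by-parts form \eqref{eq:integral_ibp}. Using Lemma \ref{lemma:time_reg}, I extend the class of admissible test functions to $G\in L^2(0,T;\mathcal H^1)$; this extension also needs $\partial_t\rho^\kappa\in L^2(0,T;(\mathcal H^1)^*)$, with the boundary terms handled through traces. Formally choosing $G=v$ gives
\begin{equation*}
\int_0^t\langle\partial_s\rho^\kappa_s,v_s\rangle\,ds+\int_0^t\int_0^1\mathbf 1_{\{\Phi(\rho^\kappa)>k\}}|\partial_u\Phi(\rho^\kappa_s)|^2\,du\,ds
=\kappa\int_0^t\Big\{(\beta-\delta\Phi(\rho^\kappa_s(1)))v_s(1)-(\lambda\Phi(\rho^\kappa_s(0))-\alpha)v_s(0)\Big\}\,ds.
\end{equation*}
Consider the right-hand side where $v_s(1)>0$. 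There $\Phi(\rho^\kappa_s(1))>k\ge\beta/\delta$, so $\beta-\delta\Phi(\rho^\kappa_s(1))<0$. Likewise at $u=0$ we get $\lambda\Phi(\rho^\kappa_s(0))-\alpha>0$. Hence the right-hand side is $\le0$, and the gradient term on the left is $\ge0$.

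For the time term I use the convex primitive $\Psi(r):=\int_{c_0}^{r\vee c_0}(\Phi(a)-k)\,da$. It satisfies $\Psi'(\rho)=v$, $\Psi\ge0$, and $\Psi(r)=0$ iff $r\le c_0$. A chain rule in the spirit of the Alt--Luckhaus / Otto lemma then gives $\int_0^t\langle\partial_s\rho^\kappa,v\rangle\,ds=\int_0^1\Psi(\rho^\kappa_t)\,du-\int_0^1\Psi(\gamma)\,du$. Since $\gamma\le c_\gamma\le c_0$, we have $\Psi(\gamma)=0$. So $\int_0^1\Psi(\rho^\kappa_t)\,du\le0$ for a.e.\ $t$, hence $\rho^\kappa_t\le c_0$ a.e. The constant $c_0$ does not depend on $\kappa$, which gives the uniform $L^\infty$ bound.

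The main obstacle is justifying this chain rule for $\partial_t\rho^\kappa$ paired with a nonlinear function of $\rho^\kappa$ when there is so little time regularity. I would first try Steklov averaging in time. Replace $\partial_t\rho$ by the difference quotient $h^{-1}(\rho_{t+h}-\rho_t)$, which is admissible directly from the integral identity \eqref{eq:integral_zre_rob} after a density argument in $G$. Then use the convexity inequality $\Psi(b)-\Psi(a)\le(b-a)\Psi'(b)$ to bound $\int h^{-1}(\rho_{t}-\rho_{t-h})v_t$ from below by difference quotients of $\int\Psi(\rho)$, and pass $h\to0$. The boundary traces of $v$ pass to the limit because $v\in L^2(0,T;\mathcal H^1)$ and the trace is continuous on $\mathcal H^1$. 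As an alternative, one could replace $(\cdot)^+$ by a smooth convex approximation.
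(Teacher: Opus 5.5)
Your argument is correct, but it takes a genuinely different route from the paper.

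\textbf{The paper's route.} The paper argues by duality. It writes the equation for $\omega^\kappa=\rho^\kappa-c_0$ in the very weak form (with $\Delta G$) and factors $\Phi(\rho^\kappa)-\Phi(c_0)=v^\kappa\omega^\kappa$ with $0\le v^\kappa\le g^*$. It then regularises $v^\kappa$ into smooth coefficients $a_n^\kappa\ge 1/n$ (Lemma \ref{lemma:convergent_sequence}). Finally it tests with the solution $\phi_n^\kappa\in[0,1]$ of the backward problem $\partial_s\phi+a_n^\kappa\Delta\phi=0$, $\partial_u\phi(s,0)=\kappa\lambda\phi(s,0)$, $\partial_u\phi(s,1)=-\kappa\delta\phi(s,1)$, $\phi(t,\cdot)=h$.
\begin{itemize}
\item These Robin conditions annihilate the boundary terms.
\item Positivity of $\phi_n^\kappa$ and the choice of $c_0$ make the data terms nonpositive.
\item The $n$-uniform bound of Lemma \ref{lemma:parabolic_bound} kills the commutator $\int_0^t\langle\omega^\kappa_s,((a_n^\kappa)_s-v^\kappa_s)\Delta(\phi_n^\kappa)_s\rangle\,ds$.
\end{itemize}
This gives $\langle\omega^\kappa_t,h\rangle\le0$. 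That route uses only $\omega^\kappa\in L^2$ and never $\partial_u\Phi(\rho^\kappa)$. The cost is importing solvability, a maximum principle and second-derivative bounds for an auxiliary Robin problem, plus a regularisation of the possibly degenerate coefficient.

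\textbf{Your route.} Your Stampacchia truncation instead exploits the regularity $\Phi(\rho^\kappa)\in L^2(0,T;\mathcal H^1)$ already built into Definition \ref{def:weak_robin}. It needs no auxiliary PDE and is insensitive to degeneracy of $\Phi$. It also makes the mechanism transparent: the Robin flux is dissipative above the level $k\ge\max\{\alpha/\lambda,\beta/\delta\}$. The price is the chain rule.

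\textbf{Care points in the Steklov step.}
\begin{itemize}
\item Use only the backward quotient $h^{-1}(\rho^\kappa_t-\rho^\kappa_{t-h})$ paired with $v_t$. For the forward quotient you first mention, convexity gives the inequality in the wrong direction.
\item Apply the sign information (gradient term $\ge0$, boundary terms $\le0$) only after $h\to0$.
\item Handle $t=0$ by setting $\rho^\kappa_s:=\gamma$ for $s<0$. This is legitimate because the weak formulation gives $\langle\rho^\kappa_t-\gamma,G\rangle$ directly for time-independent $G$. Telescoping then leaves $h^{-1}\int_{\tau-h}^{\tau}\int_0^1\Psi(\rho^\kappa_s)\,du\,ds$ with no initial term. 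Weak continuity at $t=0$ alone would only give $\liminf_{t\to0}\int_0^1\Psi(\rho^\kappa_t)\,du\ge\int_0^1\Psi(\gamma)\,du$, which is the wrong direction.
\end{itemize}
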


Before proving Proposition \ref{prop:comparison}, we recall some results from \cite{agnr25}, namely Lemmas A.2, A.3 and A.4.

\begin{lemma}\label{lemma:parabolic_uniqueness}
Suppose that: \begin{enumerate}[i)] 
\item $a=a(t, u)$ is a positive $C^{2,2}([0,T]\times [0,1])$ function, 
\item $b=b(t, u)$ is a function defined on $[0, T]\times[0, 1]$ such that $b(\cdot, 0)$ and $b(\cdot, 1)$ are in $C^2([0,T])$,
\item $h=h(u)$ is a  $C^2_0([0,1])$ function.
\end{enumerate}
Then, for each $t \in (0,T]$, the problem with Robin boundary conditions 
\begin{equation}\label{eq:parabolic_pde}
    \left\{
    \begin{array}{ll} 
    \partial_s\phi + a\Delta \phi  =0 &  \mbox{ for } (s,u) \in [0,t)\times(0,1),
    \\\partial_u\phi(s, 0)=b(s, 0)\,\phi(s, 0) & \mbox{ for } s \in [0,t),
    \\ \partial_u\phi(s, 1) = -b(s, 1)\,\phi(s, 1) & \mbox{ for } s \in [0,t),
    \\ \phi(t, u)  = h(u) &\mbox{ for } u \in (0,1)
    \end{array}\right.
\end{equation}
has a unique solution $\phi_0$ in $C^{1,2}([0,t]\times[0,1])$. Moreover, if $0 \le h\le1$, then 
\begin{equation*}
    0\le \phi_0(s, u) \le 1  \ \mbox{ for } (s,u) \in [0,t] \times [0,1]. 
\end{equation*}
\end{lemma}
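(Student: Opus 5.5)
The plan is to reduce the problem to a standard forward, uniformly parabolic problem with oblique (Robin) boundary conditions, and then use classical Schauder theory for existence and a maximum principle for uniqueness and for the bounds $0\le\phi_0\le1$.

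\textbf{Existence.} First I reverse time. Setting $\psi(\tau,u):=\phi(t-\tau,u)$ and $\tilde a(\tau,u):=a(t-\tau,u)$, the problem \eqref{eq:parabolic_pde} becomes the forward problem $\partial_\tau\psi=\tilde a\,\Delta\psi$ on $(0,t]\times(0,1)$. Its boundary conditions are $\partial_u\psi(\tau,0)=\tilde b(\tau,0)\psi(\tau,0)$ and $\partial_u\psi(\tau,1)=-\tilde b(\tau,1)\psi(\tau,1)$, with initial datum $\psi(0,\cdot)=h$. Since $a$ is positive and continuous on the compact set $[0,T]\times[0,1]$, it is bounded below by some $a_0>0$, so the operator is uniformly parabolic. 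The coefficients are smooth enough for Schauder theory: $\tilde a\in C^{2,2}$ and the boundary coefficients are in $C^2([0,t])$. The boundary operator has a nonvanishing normal component, so it is a regular oblique-derivative condition. Finally, $h\in C^2_0([0,1])$ vanishes together with its derivative at the endpoints, so the first-order compatibility conditions $h'(0)=\tilde b(0,0)h(0)$ and $h'(1)=-\tilde b(0,1)h(1)$ hold trivially. I would then invoke the classical existence theorem for linear parabolic oblique-derivative problems in Hölder spaces (Ladyzhenskaya--Solonnikov--Ural'tseva, Ch.~IV, or Lieberman's book). This gives $\psi\in C^{1+\theta/2,2+\theta}$, and in particular $\phi_0\in C^{1,2}([0,t]\times[0,1])$.

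\textbf{Uniqueness and the bounds.} Both follow from a weak maximum principle for this boundary value problem. By linearity, uniqueness reduces to showing that a $C^{1,2}$ solution with $h\equiv0$ vanishes, which is the special case $h=0$ of the bound $0\le\phi\le0$. For the upper bound I compare $\phi_0$ with the constant $1$. Suppose $\psi-1$ has a positive maximum on $[0,\tau_0]\times[0,1]$, and perturb as usual to $w=e^{-\mu\tau}(\psi-1)-\epsilon\tau$ to make the maximum strict.
\begin{itemize}
\item An interior maximum is excluded by $\partial_\tau w\ge0$ and $\Delta w\le0$.
\item The initial time is excluded since $h\le1$.
\item At $u=0$ a maximum forces $\partial_u\psi(\tau,0)\le0$, while the boundary condition gives $\partial_u\psi(\tau,0)=\tilde b\,\psi>0$ when $\tilde b>0$. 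When $\tilde b=0$ one uses Hopf's lemma instead.
\end{itemize}
The lower bound $\phi_0\ge0$ is handled symmetrically by looking at a negative minimum.

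\textbf{Main obstacle.} The delicate point is the sign of $b$. The argument above needs $b(\cdot,0),b(\cdot,1)\ge0$, which is the case in the intended application, where $b$ is built from $\kappa\lambda$, $\kappa\delta$ and positive quantities. For the lower bound with a general sign, I would first try the substitution $\phi=e^{\mu s}\zeta(u)\,v$ with a positive smooth $\zeta$ whose logarithmic derivative at the endpoints dominates $|b|$. This makes the transformed boundary coefficient nonnegative, at the cost of lower-order terms in the equation, which the weak maximum principle tolerates. For the upper bound $\phi_0\le1$ I will explicitly use $b\ge0$: for $b<0$ the bound can genuinely fail, since then the constant $1$ is not a supersolution.
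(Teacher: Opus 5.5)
The paper does not prove this lemma. It is imported from \cite{agnr25} (Lemma A.2), so there is no in-paper argument to compare against, and your scheme (time reversal, linear H\"older theory for existence, weak maximum principle for uniqueness and the bounds) is the natural one. The maximum-principle half is sound, including the $\zeta$-substitution that gives uniqueness and $\phi_0\ge0$ when $b$ has no sign. You are also right that $\phi_0\le1$ needs $b(\cdot,0),b(\cdot,1)\ge0$: for $b<0$ the boundary acts as a source, the principal Robin eigenvalue is negative, and the bound fails for suitable $h$ and $t$. The statement omits this hypothesis, but it holds in Proposition~\ref{prop:comparison}, where $b(\cdot,0)=\kappa\lambda$ and $b(\cdot,1)=\kappa\delta$.

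The gap is in the existence step. The H\"older-space existence theorem you invoke (Ladyzhenskaya--Solonnikov--Ural'tseva, Ch.~IV) requires the initial datum in $C^{2+\theta}([0,1])$, but $h$ is only $C^2$. Since $\psi(0,\cdot)=h$, the claimed regularity $\psi\in C^{1+\theta/2,2+\theta}$ is false in general, and Schauder-type estimates are not available in $C^{1,2}$ norms to fall back on. What is missing is an approximation argument whose key estimate comes from the maximum principle rather than from Schauder theory.

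Take smooth $h_n$ satisfying the same corner conditions with $h_n\to h$ in $C^2$; for compactly supported $h$, just mollify. Let $\psi_n$ be the Schauder solutions and set $D:=\psi_n-\psi_m$. For $r>0$, the time difference quotient $v^r:=r^{-1}\bigl(D(\cdot+r,\cdot)-D\bigr)$ is a classical solution of
\[
\partial_\tau v^r=\tilde a(\tau+r,\cdot)\,\partial_u^2 v^r+\frac{\tilde a(\tau+r,\cdot)-\tilde a(\tau,\cdot)}{r}\,\frac{\partial_\tau D(\tau,\cdot)}{\tilde a(\tau,\cdot)}.
\]
It satisfies Robin conditions with coefficient $\tilde b(\tau+r,\cdot)$, up to a boundary inhomogeneity $r^{-1}\bigl(\tilde b(\tau+r,\cdot)-\tilde b(\tau,\cdot)\bigr)D$. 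That inhomogeneity is controlled by $\|h_n-h_m\|_\infty$ and vanishes for constant $b$. The maximum principle, the limit $r\to0$ and Gronwall's inequality then give $\sup|\partial_\tau D|\lesssim\|h_n-h_m\|_{C^2}$. Hence $\partial_u^2\psi_n=\partial_\tau\psi_n/\tilde a$ converges uniformly, and the limit is a solution in $C^{1,2}([0,t]\times[0,1])$. This is where the time regularity of $a$ and $b$ is actually used.

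Note also that the corner compatibility is necessary, not a convenience: continuity of $\partial_u\phi_0$ at $(t,0)$ forces $h'(0)=b(t,0)h(0)$. So your reading of $C^2_0$ as $h=h'=0$ at the endpoints is needed for the lemma to hold at all. Compact support, which is all Proposition~\ref{prop:comparison} uses, suffices.
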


\begin{lemma}\label{lemma:parabolic_bound}
Let $\phi_0$ be the unique solution of the parabolic equation \eqref{eq:parabolic_pde}. There exists a positive constant $C=C(b,h)$ such that 
\begin{equation*}
    \int_0^t\int_0^1 a(s, u)(\Delta \phi_0(s, u))^2\de u\de s \le C(b,h).
\end{equation*}
\end{lemma}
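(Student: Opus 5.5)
The target is a bound on $\int_0^t\int_0^1 a(\Delta\phi_0)^2\,\de u\,\de s$ by a constant depending only on $b$ and $h$, uniformly in $a$. The natural tool is an energy identity: multiply the backward equation $\partial_s\phi_0 + a\Delta\phi_0 = 0$ by $\Delta\phi_0$ and integrate over $[0,t]\times[0,1]$. This gives
\[
\int_0^t\!\!\int_0^1 a(\Delta\phi_0)^2\,\de u\,\de s = -\int_0^t\!\!\int_0^1 \partial_s\phi_0\,\Delta\phi_0\,\de u\,\de s .
\]
Integrating by parts in $u$ on the right produces a bulk term $\int_0^t\int_0^1 \partial_u\partial_s\phi_0\,\partial_u\phi_0$, which equals $\tfrac12\frac{\de}{\de s}\int_0^1(\partial_u\phi_0)^2$. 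It also produces boundary terms $-\partial_s\phi_0\,\partial_u\phi_0\big|_{u=0}^{u=1}$.

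First I handle the boundary terms using the Robin conditions. Substituting $\partial_u\phi_0(s,0)=b(s,0)\phi_0(s,0)$ and $\partial_u\phi_0(s,1)=-b(s,1)\phi_0(s,1)$, the term at $u=1$ becomes
\[
b(s,1)\,\phi_0\,\partial_s\phi_0 = \tfrac12\, b(s,1)\,\partial_s(\phi_0^2),
\]
and analogously at $u=0$. Integrating by parts in $s$ moves the derivative onto $b$. This is why $b(\cdot,0),b(\cdot,1)\in C^1$ (indeed $C^2$) is assumed. The result is endpoint terms $\tfrac12 b\,\phi_0^2\big|_{s=0}^{s=t}$ and an integral of $\partial_s b\,\phi_0^2$. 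Lemma \ref{lemma:parabolic_uniqueness} gives $0\le\phi_0\le1$ when $0\le h\le1$ (and in general $|\phi_0|\le\|h\|_\infty$ by the maximum principle, applied with $h$ shifted or split into positive and negative parts). All these boundary contributions are therefore bounded by $C(\|b\|_{C^1},\|h\|_\infty,T)$, with no dependence on $a$.

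Next, the bulk time-derivative term integrates to
\[
\tfrac12\|\partial_u\phi_0(t)\|_2^2-\tfrac12\|\partial_u\phi_0(0)\|_2^2 .
\]
The sign should be arranged so that the $s=0$ piece is nonpositive and can be dropped, while the $s=t$ piece equals $\tfrac12\|h'\|_2^2$ because $\phi_0(t)=h$. I will track the signs carefully: with the backward equation, the $\|\partial_u\phi_0(0)\|^2$ term appears with a favorable sign. Collecting everything gives
\[
\int a(\Delta\phi_0)^2 \le \tfrac12\|h'\|_2^2 + C(b,h).
\]

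The main obstacle is rigor at the corners. The Robin data at $s=t$ must be compatible with $h\in C^2_0$: $h(0)=h(1)=0$ while $\partial_u h$ need not vanish. Since $\phi_0\in C^{1,2}$ only, the identity $\partial_u\partial_s\phi_0=\partial_s\partial_u\phi_0$ and the pointwise boundary evaluation of $\partial_s\phi_0$ need justification. I would first derive the identity on $[0,t-\eta]$, where parabolic regularity makes $\phi_0$ smooth enough, or mollify in $s$. I would then let $\eta\downarrow0$ using continuity of $\phi_0$ and $\partial_u\phi_0$ up to $s=t$, together with monotone convergence for the left-hand side.
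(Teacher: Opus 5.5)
The paper gives no proof of this lemma. It is quoted from \cite{agnr25}, together with Lemmas \ref{lemma:parabolic_uniqueness} and \ref{lemma:convergent_sequence}, and used in Proposition \ref{prop:comparison} with $a=a_n^\kappa$, $b(\cdot,0)=\kappa\lambda$, $b(\cdot,1)=\kappa\delta$. There, all that matters is that the constant does not depend on $a$, hence not on $n$.

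Your energy argument is the standard one for such dual problems, and it is correct. The signs come out as you anticipate:
\begin{equation*}
\int_0^t\!\!\int_0^1 a(\Delta\phi_0)^2\,\de u\,\de s=\tfrac12\|h'\|_2^2-\tfrac12\|\partial_u\phi_0(0,\cdot)\|_2^2-\sum_{i\in\{0,1\}}\Big(\tfrac12 b(0,i)\,\phi_0(0,i)^2+\tfrac12\int_0^t\partial_s b(s,i)\,\phi_0(s,i)^2\,\de s\Big).
\end{equation*}
The terms at $s=t$ vanish because $h(0)=h(1)=0$. In the application $b$ is a positive constant, so the bound reduces to $\tfrac12\|h'\|_2^2$, which is uniform even in $\kappa$.

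A few remarks on the details.

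\textbf{The bound on $\phi_0$.} The $a$-independent bound on $\sup|\phi_0|$ is exactly where the sign $b\ge0$ enters; the same sign also makes $-\tfrac12 b(0,i)\phi_0(0,i)^2\le 0$.
\begin{itemize}
\item For $0\le h\le1$, the only case used, Lemma \ref{lemma:parabolic_uniqueness} gives the bound directly.
\item Your suggestion of shifting $h$ or splitting it into $h^\pm$ should be dropped. A shift destroys $h(0)=h(1)=0$, and $h^\pm$ need not be $C^2$. For general $h$ with $b\ge0$, apply the maximum principle directly.
\item Without $b\ge0$, no bound uniform in $a$ can hold. For constant $a$ and $b<0$ there are growing modes $e^{ak^2(t-s)}\cosh(k(u-\frac12))$ with $k\tanh(k/2)=|b|$.
\end{itemize}

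\textbf{Rigour without parabolic smoothing.} You only need $\phi_0\in C^{1,2}([0,t]\times[0,1])$; boundary values of $\partial_s\phi_0$ are then unproblematic.
\begin{itemize}
\item On $[0,t-\tau]$, replace $\partial_s\phi_0$ by the difference quotient $D_\tau\phi_0=(\phi_0(s+\tau,\cdot)-\phi_0(s,\cdot))/\tau$, which is $C^2$ in $u$, and integrate by parts in $u$.
\item Use $(X-Y)Y\le\frac12(X^2-Y^2)$ to bound $\int_0^1\partial_u D_\tau\phi_0\,\partial_u\phi_0(s)\,\de u$ by $\frac{1}{2\tau}\big(\|\partial_u\phi_0(s+\tau)\|_2^2-\|\partial_u\phi_0(s)\|_2^2\big)$, which telescopes after integrating in $s$.
\item Letting $\tau\downarrow0$ uses only the uniform continuity of $\partial_s\phi_0$ and $\partial_u\phi_0$ on the closed rectangle. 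It yields the needed inequality, with the right sign, directly.
\end{itemize}

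\textbf{Corner compatibility.} You do not need to resolve this. Continuity of $\partial_u\phi_0$ up to $s=t$, which Lemma \ref{lemma:parabolic_uniqueness} asserts, together with the Robin conditions already forces $h'(0)=h'(1)=0$.
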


\begin{lemma}\label{lemma:convergent_sequence}
Let $a$ be a non-negative, bounded, measurable function on $[0,T] \times [0,1]$. There exists a sequence $\{a_k\}_{k\ge0}$ of positive functions in $C^{\infty}([0,T]\times [0,1])$ such that, for any $\omega\in L^2([0,T]\times[0,1])$,
\begin{equation*}
    \frac{1}{k}\le a_k \le \|a\|_{L^{\infty}}+\frac{1}{k} \ \ \text{and } \ \ \left\|\omega\;\frac{a-a_k}{\sqrt{a_k}}\right\|_{L^2([0,T]\times[0,1])} \to 0, 
\end{equation*}
as $k\to\infty$.
\end{lemma}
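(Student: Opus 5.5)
The plan is to build $a_k$ by mollification plus a positive floor, and then to reduce the $L^2$ convergence, for every $\omega$ simultaneously, to a pointwise statement about the single weight
\[
r_k:=\frac{(a-a_k)^2}{a_k}.
\]
The target is $\|\omega\,(a-a_k)/\sqrt{a_k}\|_2^2=\int \omega^2 r_k\,\de u\,\de s$. If $r_k\to0$ almost everywhere and $\sup_k\|r_k\|_{L^\infty}<\infty$, then dominated convergence (with dominating function $\omega^2\sup_k\|r_k\|_\infty\in L^1$) gives the claim for every $\omega\in L^2$ at once. The sequence does not depend on $\omega$, which is what the statement requires.

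\textbf{Construction.} Extend $a$ by $0$ outside $[0,T]\times[0,1]$ and mollify, obtaining smooth $\tilde a_n$ with $0\le\tilde a_n\le\|a\|_{L^\infty}$. Since $\tilde a_n\to a$ in $L^1$, a subsequence converges almost everywhere. Set
\[
a_k:=\tilde a_{n_k}+\tfrac1k,
\]
with $n_k$ chosen independently of $\omega$, for instance so that $\|\tilde a_{n_k}-a\|_{L^1}\le 2^{-k}$; Borel--Cantelli then gives $a_k\to a$ almost everywhere. The bounds $\frac1k\le a_k\le\|a\|_{L^\infty}+\frac1k$ are immediate.

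\textbf{Pointwise convergence.} Where $a>0$, we have $a_k\to a>0$, hence $r_k\to0$. Where $a=0$, we have $r_k=a_k\to0$. So $r_k\to0$ almost everywhere.

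\textbf{Uniform domination.} Split according to whether $a\le 2a_k$.
\begin{itemize}
\item On $\{a\le 2a_k\}$: $|a-a_k|/a_k\le 3$, so $r_k\le 3|a-a_k|\le 3(\|a\|_{L^\infty}+1)$.
\item On $B_k:=\{a>2a_k\}$: only $r_k\le a^2/a_k\le k\|a\|_{L^\infty}^2$ is available.
\end{itemize}
So I would first try to arrange $a_k\ge a/2$ almost everywhere. For example, one could replace $\tilde a_n$ by a smooth approximation from above of a lower semicontinuous majorant of $a$ given by Vitali--Carathéodory, aiming at $|B_k|=0$. With $B_k$ null, the bound $r_k\le 3(\|a\|_{L^\infty}+1)$ holds almost everywhere and the dominated convergence argument closes.

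\textbf{Main obstacle.} I expect the real difficulty to be exactly this uniform domination on $B_k$, and I am not confident it can be achieved. If $a=\boldsymbol 1_U$ with $U$ open, dense and of small measure, then no continuous $a_k$ can satisfy $a_k\ge a/2$ on $U$ while tending to $0$ almost everywhere on $U^c$: by continuity and density, each $a_k$ is small on a nonempty open subset of $U$ adjacent to $U^c$. On such sets $r_k$ is of order $1/a_k$, hence large. Placing $\omega^2$ on small disjoint pieces of these sets, with summable masses decaying slower than $a_k$ along a subsequence, appears to make $\int\omega^2 r_k\not\to0$. So for general bounded measurable $a$ the domination step, and possibly the "for any $\omega$" uniformity itself, may fail.

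\textbf{Fallback.} If the domination cannot be arranged, I would prove the statement under the additional hypothesis that makes it work, namely that $a$ is continuous (or lower bounded away from $0$ on its support up to a null set). Then uniform convergence $\tilde a_n\to a$ makes $B_k$ empty for $n_k$ large, and the argument above goes through verbatim.
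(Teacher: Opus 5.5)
Your proposal does not prove the statement. It stops at the uniform-domination step, and both the Vitali--Carathéodory idea and the fallback change the hypotheses on $a$.

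\textbf{The literal statement is false.} Your suspicion is correct and can be made rigorous: with a single sequence $\{a_k\}$ working for every $\omega\in L^2$, the lemma fails. Set $r_k:=(a-a_k)^2/a_k$, which is in $L^\infty$ because $a$ is bounded and $a_k\ge 1/k$. Convergence for every $\omega$ gives $\int f\,r_k\to0$ for every $f\in L^1$: write $f=f^+-f^-$ with $f^\pm=(\sqrt{f^\pm})^2$ and $\sqrt{f^\pm}\in L^2$. The uniform boundedness principle on $L^1$ then yields $C:=\sup_k\|r_k\|_{L^\infty}<\infty$. So uniform domination is \emph{necessary}, not merely convenient.

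Now take $a=\boldsymbol 1_U$ with $U$ open, dense and $|U^c|>0$, for instance a union of small balls around a countable dense set.
\begin{itemize}
\item On $U$, the bound $(1-a_k)^2\le C a_k$ forces $a_k\ge\min\{1/2,1/(4C)\}$ almost everywhere.
\item By continuity this holds everywhere on $U$, and by density on all of $[0,T]\times[0,1]$.
\item Taking $\omega\equiv1$ gives $\int r_k\ge\int_{U^c}a_k\ge\min\{1/2,1/(4C)\}\,|U^c|$, which does not tend to $0$.
\end{itemize}
Note that your alternative hypothesis ``$a$ bounded away from $0$ on its support'' cannot rescue the statement, since $\boldsymbol 1_U$ satisfies it.

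\textbf{What is true, and how your construction proves it.} The paper gives no proof of this lemma; it imports it from \cite{agnr25}. It only ever uses the version in which the sequence may depend on $\omega$. In Proposition \ref{prop:comparison} the lemma is applied to $a=v^\kappa$ with the single, already fixed function $\omega=\omega^\kappa$.

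For that version your construction works for every bounded measurable $a$, with no domination needed. Fix $\omega$ and take smooth $\tilde a_n\to a$ almost everywhere with $0\le\tilde a_n\le\|a\|_{L^\infty}$. Then $\int\omega^2(a-\tilde a_n)^2\to0$ by dominated convergence. Using $a_k=\tilde a_{n_k}+1/k\ge 1/k$,
\[
\int\omega^2\,\frac{(a-a_k)^2}{a_k}\;\le\;2k\int\omega^2(a-\tilde a_{n_k})^2+\frac{2}{k}\,\|\omega\|_2^2 .
\]
Choosing $n_k=n_k(\omega)$ so that $\int\omega^2(a-\tilde a_{n_k})^2\le k^{-2}$ makes the right-hand side at most $(2+2\|\omega\|_2^2)/k\to0$.

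The missing idea is therefore to let $n_k$ depend on $\omega$, not to strengthen the hypotheses on $a$. Your continuity fallback is unnecessary. As written it would also need a continuous extension of $a$ rather than extension by $0$, since otherwise $\tilde a_n$ does not converge uniformly near the boundary.
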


\begin{proof}[Proof of Proposition \ref{prop:comparison}] Let 
\begin{equation*}
    \omega_t^\kappa(u) := \rho_t^\kappa(u)-c_0.
\end{equation*}
From \eqref{eq:integral_zre_rob}, we see that 
\begin{equation}\label{eq:omega_formulation} \begin{split}
    \langle \omega_t^\kappa , G_t\rangle &-\int_0^t \big\{\langle \omega_s^\kappa, \partial_s G_s \rangle + \big\langle \Phi(\rho_s^\kappa)-\Phi(c_0), \Delta G_s\big\rangle\big\}\de s
    \\& +\int_0^t (\Phi(\rho_s^\kappa(1))-\Phi(c_0))\,\big\{\partial_uG_s(1) +\kappa \delta G_s(1)\big\}\de s
    \\&-\int_0^t (\Phi(\rho_s^\kappa(0))-\Phi(c_0))\,\big\{ \partial_uG_s(0)- \kappa \lambda G_s(0)\big\} \de s
    \\&= \langle \gamma-c_0, G_0\rangle +\kappa\int_0^t \big\{(\beta-\delta\Phi(c_0)) G_s(1)-(\lambda\Phi(c_0)-\alpha) G_s(0)\big\}\de s.
    \end{split}
\end{equation}
By \eqref{eq:Phi_g*}, we have that $|\Phi(\rho_s^\kappa(u))-\Phi(c_0)|\le g^*|\omega_s^\kappa(u)|$ for all $u\in[0,1]$ and $s\in[0, T]$. Then, we can define the bounded function 
\begin{equation*}
    v_s^\kappa(u):=\frac{\Phi(\rho_s^\kappa(u))-\Phi(c_0)}{\rho_s^\kappa(u)-c_0}\ \ \text{ for }u\in[0,1]\text{ and }s\in[0, T],
\end{equation*}
so that
\begin{equation*}
    \Phi(\rho_s^\kappa(u))-\Phi(c_0) = v_s^\kappa(u)\,\omega_s^\kappa(u) \ \ \text{ for }u\in[0,1]\text{ and }s\in[0, T].
\end{equation*}
Using the function $v_t^\kappa$, we can rewrite  \eqref{eq:omega_formulation} as
\begin{equation}\label{eq:omega_formulation_2} 
\begin{split}
    \langle \omega_t^\kappa , G_t\rangle &-\int_0^t \big\{\langle \omega_s^\kappa, \partial_s G_s \rangle + \big\langle \Phi(\rho_s^\kappa)-\Phi(c_0), \Delta G_s\big\rangle\big\}\de s
    \\& +\int_0^t v_s^\kappa(1)\omega_s^\kappa(1)\,\big\{\partial_uG_s(1) +\kappa \delta G_s(1)\big\}\de s
    \\&-\int_0^t v_s^\kappa(0)\omega_s^\kappa(0)\,\big\{ \partial_uG_s(0)- \kappa \lambda G_s(0)\big\} \de s
    \\&= \langle \gamma-c_0, G_0\rangle +\kappa\int_0^t \big\{(\beta-\delta\Phi(c_0)) G_s(1)-(\lambda\Phi(c_0)-\alpha) G_s(0)\big\}\de s.
    \end{split}
\end{equation}
To estimate the integrals in \eqref{eq:omega_formulation_2}, we employ a suitable test function, namely the solution to the parabolic equation \eqref{eq:parabolic_pde}. Since the function $\Phi$ is strictly increasing, it follows that $v^\kappa$ is non negative, and by \eqref{eq:Phi_g*} we also have that $v^\kappa\le g^*$. However, the function $v^\kappa$ does not possess the required regularity: to address this, we apply Lemma \ref{lemma:convergent_sequence} with $a=v^\kappa$, which guarantees the existence of a sequence of functions $\{a_n^\kappa\}_{n\ge 0}$ which are $C^{\infty}$ in both space and time such that
\begin{equation*}
    \frac{1}{n} \le a_n^\kappa \le g^*+\frac{1}{n} \ \ \ \text{and} \ \ \ \left\|\omega^\kappa\, \frac{a_n^\kappa-v^\kappa}{\sqrt{a_n^\kappa}}\right\|_{L^2([0,T] \times [0,1])} \to 0
\end{equation*}
as $n\to\infty$. Note that, as $\{\rho^\kappa\}_\kappa$ is bounded in $L^2([0,T] \times [0,1])$, $\{\omega^\kappa\}_\kappa$ is also bounded in $ L^2([0,T] \times [0,1])$. For fixed $h \in C^2_0([0,1])$ such that $0\le h\le 1$, consider the parabolic problem \eqref{eq:parabolic_pde} with $a$ replaced by $a_n^\kappa$ and with $b(s,0)=\kappa \lambda$, $b(s,1)=\kappa\delta$. Then, from Lemma \ref{lemma:parabolic_uniqueness}, there exists a unique solution $\phi_n^\kappa$ to this problem. Let us now choose the test function in \eqref{eq:omega_formulation_2} to be $G=\phi_n^\kappa$. Due to the boundary conditions satisfied by $\phi_n^\kappa$, the second and third line of \eqref{eq:omega_formulation_2} vanish, and thus we obtain 
\begin{equation}\label{eq:omega_formulation_3}
    \begin{split}
    &\langle \omega_t^\kappa , (\phi_n^\kappa)_t\rangle - \int_0^t \langle \omega_s, \partial_s \phi^n_s+  v_s \Delta \phi^n_s\rangle\de s
    \\& = \langle \gamma-c_0, (\phi_n^\kappa)_0\rangle +\kappa\int_0^t \big\{(\beta-\delta\Phi(c_0)) (\phi_n^\kappa)_s(1)-(\lambda\Phi(c_0)-\alpha) (\phi_n^\kappa)_s(0)\big\}\de s.
    \end{split}
\end{equation}
Since $(\phi_n^\kappa)_t=h$, we have that $\langle \omega_t^\kappa , (\phi_n^\kappa)_t\rangle=\langle \omega_t^\kappa, h\rangle$. We now sum and subtract the term $\langle \omega_s^\kappa, (a_n^\kappa)_s\Delta (\phi_n^\kappa)_s\rangle$ in \eqref{eq:omega_formulation_3} and use the fact that 
\begin{equation*}
    \partial_s (\phi_n^\kappa)_s + (a_n^\kappa)_s \Delta (\phi_n^\kappa)_s = 0
\end{equation*}
in the interval $(0,1)$, so that \eqref{eq:omega_formulation_3} can be rewritten as
\begin{equation}\label{eq:omega_formulation_4}
    \begin{split}
    \langle \omega_t^\kappa , &h\rangle =
    -\int_0^t \langle \omega_s^\kappa, \big((a_n^\kappa)_s-v_s^\kappa\big) \Delta (\phi_n^\kappa)_s\rangle\de s 
    \\&+\langle \gamma-c_0, (\phi_n^\kappa)_0\rangle +\kappa\int_0^t \big\{(\beta-(\delta\Phi(c_0)) (\phi_n^\kappa)_s(1)+(\alpha-\lambda\Phi(c_0)) (\phi_n^\kappa)_s(0)\big\}\de s.
   \end{split}
\end{equation}
Now, by the Cauchy-Schwarz inequality and by Lemma \ref{lemma:parabolic_bound}, and we see that
\begin{align*}
    &\left|\int_0^t \langle \omega_s^\kappa, \big((a_n^\kappa)_s-v_s^\kappa\big) \Delta (\phi_n^\kappa)_s\rangle\de s \right|
    \\&\le\left(\int_0^t \int_0^1\Big\{ \omega_s(u) \frac{v_s(u)-a_s^n(u)}{\sqrt{a^n_s(u)}}\Big\}^2\de u\de s\right)^{\frac{1}{2}} \left(\int_0^t \int_0^1 a^n_s(u)\big\{\Delta \phi^n_s(u)\big\}^2\de u\de s\right)^{\frac{1}{2}}
    \\&\le C \,\left\|\omega\,\frac{v-a^n}{\sqrt{a^n}}\right\|_{L^2([0,T]\times[0,1])},
\end{align*}
where $C$ denotes a constant depending only on $\lambda, \delta$ ad $h$. Moreover, since $0\le h\le 1$, it follows from Lemma \ref{lemma:parabolic_uniqueness} that $(\phi_n^\kappa)_s(u)\ge 0$ for all $(s, u)\in[0, t]\times[0, 1]$. Recalling the definition of $c_0$ and since $\Phi$ is nondecreasing, this implies that $\langle \gamma-c_0, (\phi_n^\kappa)_0\rangle\le 0$, $(\beta-\delta\Phi(c_0))(\phi_n^\kappa)_s(1)\le 0$ and $(\alpha-\lambda\Phi(c_0)) (\phi_n^\kappa)_s(0)\le 0$, so that the second line of \eqref{eq:omega_formulation_4} is non-positive. Combining everything together, sending $n\to\infty$ and using Lemma \ref{lemma:convergent_sequence}, it follows that 
\begin{equation}\label{eq:omega_positive}
    \langle \omega_t^\kappa, h\rangle \le 0
\end{equation}
for any $h \in C^2_0([0,1])$ such that $0\le h\le 1$. 

Consider now a sequence of functions $\{h_j\}_{j\ge1}$ in $C^2_0([0,1])$ such that 
\begin{equation*}
    \mylim_{j\to\infty}\left\|h_j - \boldsymbol{1}_{\{u \in [0,1]:\, \omega_t^\kappa(u)>0\}}\right\|_{L^2([0, 1])}=0.
\end{equation*}
Then, using \eqref{eq:omega_positive}, we obtain
\begin{equation*}
    \int_0^1 (\omega_t^\kappa)^+(u) \de u \le 0,
\end{equation*}
where $x^+ := \mymax\{x, 0\}$ denotes the positive part. Therefore, for any $t \in [0,T]$, $\rho_t^\kappa(u)\le c_0$ for almost every $u \in [0,1]$; that is, $\rho^\kappa\le c_0$ for almost every $(t,u) \in [0,T]\times[0,1]$.
\end{proof}

\begin{proof}[Proof of the Lemma \ref{lemma:inf}]
Recall that the stationary density profile is given by
\[
\bar\rho^\kappa(u)
=
\Phi^{-1}\bigl(\bar\varphi^\kappa(u)\bigr),
\]
where
\[
\bar\varphi^\kappa(u)
=
\frac{
\kappa\bigl[\alpha\delta(1-u)+\beta\lambda u\bigr]
+\alpha+\beta
}{
\kappa\lambda\delta+\lambda+\delta
}.
\]

Since $\Phi^{-1}$ is strictly increasing and continuous,
\[
\inf_{\kappa>0}\bar\rho^\kappa(u)
=
\Phi^{-1}\left(
\inf_{\kappa>0}\bar\varphi^\kappa(u)
\right).
\]

For each fixed $u\in[0,1]$, the function
$\kappa\mapsto\bar\varphi^\kappa(u)$
is monotone. Indeed,
\[
\frac{\partial}{\partial\kappa}
\bar\varphi^\kappa(u)
=
\frac{
(\alpha\delta-\beta\lambda)
\bigl[\delta-(\lambda+\delta)u\bigr]
}{
(\kappa\lambda\delta+\lambda+\delta)^2
},
\]
whose sign is independent of $\kappa$.

Consequently,
\[
\inf_{\kappa>0}\bar\varphi^\kappa(u)
=
\min\left\{
\lim_{\kappa\to0}\bar\varphi^\kappa(u),
\lim_{\kappa\to\infty}\bar\varphi^\kappa(u)
\right\}.
\]

Since
\[
\lim_{\kappa\to0}\bar\varphi^\kappa(u)
=
\frac{\alpha+\beta}{\lambda+\delta}
\]
and
\[
\lim_{\kappa\to\infty}\bar\varphi^\kappa(u)
=
\frac{\alpha}{\lambda}(1-u)
+\frac{\beta}{\delta}u,
\]
the result follows.
\end{proof}

\begin{acknowledgements} \small{O.A. acknowledges support from the National Council for Scientific and Technological Development (CNPq, Brazil) through a Junior Postdoctoral Fellowship (Grant No. 152638/2022-9), and from the Fundação para a Ciência e a Tecnologia (FCT, Portugal) under the project IT137-22-035 - UIDP/00324/2020 - Center for Mathematics of the University of Coimbra. P.G. expresses warm thanks to Fundação para a Ciência e Tecnologia FCT/Portugal for financial support through the projects UIDB/04459/2020, UIDP/04459/2020 and ERC/FCT. A.N. acknowledges support from the National Council for Scientific and Technological Development (CNPq, Brazil) through a research productivity fellowship (Grant No. 307606/2025-2). M.C.R. gratefully acknowledges support from the Dean's PhD Scholarship at Imperial College London and the kind hospitality of Instituto Superior Técnico in May 2026, when part of this work was carried out.}
\end{acknowledgements}

\bibliographystyle{Martin} 
\bibliography{references}

\end{document}